\newtheorem{theorem}{Theorem}[section]
\newtheorem{lemma}{Lemma}[section]
\newtheorem{remark}{Remark}[section]
\newtheorem{corollary}{Corollary}[section]
\numberwithin{equation}{section}
\begin{document}

\newcommand{\ga}{\gamma}
\newcommand{\Ga}{\Gamma}
\newcommand{\ka}{\kappa}
\newcommand{\teps}{\tilde e}
\newcommand{\fy}{\varphi}
\newcommand{\Om}{\Omega}
\newcommand{\si}{\sigma}
\newcommand{\Si}{\Sigma}
\newcommand{\de}{\delta}
\newcommand{\De}{\Delta}
\newcommand{\la}{\lambda}
\newcommand{\La}{\Lambda}
\newcommand{\ep}{\epsilon}

\newcommand{\vth}{\vartheta}
\newcommand{\vtht}{{\widetilde \vartheta}}
\newcommand{\rh}{\varrho}
\newcommand{\rlh}{{\widetilde \varrho}}

\def\al{{\alpha}}
\def\RR{{\mathbb R}}
\def\II{{(D)}}
\newcount\icount

\def\DD#1#2{\icount=#1
  \ifnum\icount<1
  \,_{ 0}\kern -.1em D^{#2}_{\kern -.1em x}
  \else
  \,_{x}\kern -.2em D^{#2}_1
  \fi
}

\def\DDRI#1#2{\icount=#1
  \ifnum\icount<1
  \,_{-\infty}^{\kern 1em R}\kern -.2em D^{#2}_{\kern -.1em x}
  \else
  \,_{x}^R \kern -.2em D^{#2}_\infty
  \fi
}

\def\DDR#1#2{\icount=#1
  \ifnum\icount<1
 _{0}^{ \kern -.1em R} \kern -.2em D^{#2}_{\kern -.1em x}
  \else
 _{x}^{ \kern -.1em R} \kern -.2em D^{#2}_{\kern -.1em 1}
  \fi
}

\def\DDCI#1#2{\icount=#1
  \ifnum\icount<1
  \,_{-\infty}^{\kern 1em C}  \kern -.2em D^{#2}_{\kern -.1em x}
  \else
  \,_{x}^C \kern -.2em  D^{#2}_\infty
  \fi
}

\def\DDC#1#2{\icount=#1
  \ifnum\icount<1
  \,_{0}^C \kern -.2em  D^{#2}_{\kern -.1em x}
  \else
  \,_{x}^C \kern -.2em D^{#2}_1
  \fi
}

\def\Hd#1{\widetilde H^{#1}(D)}

\def\Hdi#1#2{\icount=#1
  \ifnum\icount<1
  \widetilde H_{L}^{#2}\II
  \else
  \widetilde H_{R}^{#2}\II
  \fi
}

\title[Galerkin FEM for space-fractional diffusion]
{Error Analysis of Finite Element Methods for Space-Fractional Parabolic Equations}
\author {Bangti Jin \and Raytcho Lazarov \and Joseph Pasciak \and Zhi Zhou}
\address {Department of Mathematics, University of California, Riverside, University Ave. 900,
Riverside, CA 92521 (\texttt{bangti.jin@gmail.com})}
\address {Department of Mathematics, Texas A\&M University, College Station, TX 77843-3368
({\texttt{lazarov, pasciak, zzhou@math.tamu.edu}})}

\date{started May 21, 2013; today is \today}
\maketitle

\begin{abstract}
We consider an initial/boundary value problem for one-dimensional fractional-order parabolic equations
with a space fractional derivative of Riemann-Liouville type and order $\alpha\in (1,2)$. We study
a spatial semidiscrete scheme with the standard Galerkin finite element method with piecewise linear
finite elements, as well as fully discrete schemes based on the backward Euler method and Crank-Nicolson
method. Error estimates in the $L^2\II$- and $H^{\alpha/2}\II$-norm are derived for the semidiscrete
scheme, and in the $L^2\II$-norm for the fully discrete schemes. These estimates are for both smooth
and nonsmooth initial data, and are expressed directly in terms of the smoothness of the
initial data. Extensive numerical results are presented to illustrate the theoretical results.
\end{abstract}

\section{Introduction}\label{sec:intro}
We consider the following initial/boundary value problem for a space fractional-order parabolic
differential equation (FPDE) for $u(x,t)$:
\begin{equation}\label{eqn:fpde}
   \begin{aligned}
    u_t- {\DDR0 {\al}} u&= f,\quad x\in D=(0,1), \ 0<t\le T,\\
    u(0,t)&=u(1,t)=0, \quad 0<t\le T,\\
    u(x,0)&=v,\quad x\in D,
   \end{aligned}
\end{equation}
where $\al\in(1,2)$ is the order of the derivative, $f\in L^2(0,T;L^2\II)$, and $\DDR0 \alpha u$ refers to
the Riemann-Liouville fractional derivative of order $\al$, defined in \eqref{Riemann} below, and
$T>0$ is fixed. In case of $\alpha=2$, the fractional derivative $\DDR0\alpha u$ coincides with
the usual second-order derivative $u''$ \cite{KilbasSrivastavaTrujillo:2006}, and then
model \eqref{eqn:fpde} recovers the classical diffusion equation.

The classical diffusion equation is often used to describe diffusion processes. The use of a Laplace
operator in the equation rests on a Brownian motion assumption on the random motion of individual
particles. However, over last few decades, a number of studies \cite{BensonWheatcraftMeerschaert:2000,
HatanoHatano:1998,MetzlerKlafter:2000} have shown that anomalous diffusion, in which the mean square
variances grows faster (superdiffusion) or slower (subdiffusion) than that in a
Gaussian process, offers a superior fit to experimental data observed in some
processes, e.g., viscoelastic materials, soil contamination, and underground water flow. In particular,
at a microscopic level, the particle motion might be dependent, and can frequently take very large steps,
following some heavy-tailed probability distribution. The long range correlation and large jumps can cause
the underlying stochastic process to deviate significantly from Brownian motion for the classical diffusion
process. Instead, a Levy process is considered to be more appropriate. The macroscopic
counterpart is space fractional diffusion equations (SpFDEs) \eqref{eqn:fpde}, and we refer to
\cite{BensonWheatcraftMeerschaert:2000} for the derivation and relevant physical explanations. Numerous
experimental studies have shown that SpFDEs can provide accurate description of the superdiffusion
process.

Because of the extraordinary modeling capability of SpFDEs, their accurate numerical solution  has become
an important task. A number of numerical methods, prominently the finite difference method, have been
developed for the time-dependent superdiffusion process in the literature. The finite difference scheme is
usually based on a shifted Gr\"{u}nwald formula for the Riemann-Liouville fractional derivative in space.
In \cite{TadjeranMeerschaert:2007, TadjeranMeerschaertScheffler:2006}, the stability, consistency and
convergence were shown for the
finite difference scheme with the Crank-Nicolson scheme in time.
In these works, the convergence rates
are provided under the a priori assumption that the solution $u$ to \eqref{eqn:fpde} is sufficiently smooth,
which unfortunately is not justified in general, cf. Theorem \ref{thm:fullreg}.

In this work, we develop a finite element method for \eqref{eqn:fpde}. It is based on the variational formulation
of the space fractional boundary value problem, initiated in \cite{ErvinRoop:2006,ErvinRoop:2007} and recently
revisited in \cite{JinLazarovPasciak:2013a}. We establish $L^2\II$- and $\Hd{\alpha/2}$-norm error
estimates for the space semidiscrete scheme, and $L^2\II$-norm estimates for fully discrete schemes, using
analytic semigroup theory \cite{ItoKappel:2002}. Specifically,
we obtained the following results. First, in Theorem
\ref{thm:existence} we establish the
existence and uniqueness of a weak solution $u\in L^2(0,T;\Hd{\alpha/2})$ of \eqref{eqn:fpde}
(see Section \ref{sec:prelim} for
the definitions of the space $\Hd\beta$ and the operator $A$) and in Theorem \ref{thm:fullreg}
show an enhanced regularity $u\in C((0,T];\Hdi0{\al-1+\beta})$
with $\beta\in[0,1/2)$, for $v\in L^2\II$. Second, in Theorems \ref{thm:semismooth} and \ref{thm:seminonsmooth}
we show that the semidiscrete finite element solution $u_h(t)$ with
suitable discrete initial value $u_h(0)$ satisfies the a priori error bound
\begin{equation*}
  \|u_h(t)-u(t)\|_{L^2\II}+h^{\frac{\alpha}{2}-1+
      \beta}\|u_h(t)-u(t)\|_{\Hd{\frac{\alpha}{2}}}\leq Ch^{\alpha-2+2\beta}t^{l-1}\|A^lv\|_{L^2\II}, \, \, l=0,1,
\end{equation*}
with $h$ being the mesh size and any $\beta\in[0,1/2)$. Further we derived error estimates
for the fully discrete solution $U^n$, with $\tau$ being the time step size and $t_n=n\tau$,
for the backward Euler method and Crank-Nicolson method.
For the backward Euler method, in Theorems \ref{thm:fullsmooth:euler}
and \ref{thm:fullnonsmooth:euler}, we establish the following error estimates
\begin{equation*}
  \|u(t_n)-U^n\|_{L^2\II}\leq C (h^{\al-2+2\beta} + \tau)t_n^{l-1}\| A^lv \|_{L^2\II} \quad l=0,1,
\end{equation*}
and for the Crank-Nicolson method, in Theorems \ref{thm:fullsmooth:cn} and \ref{thm:fullnonsmooth:cn}, we prove
\begin{equation*}
  \|u(t_n)-U^n\|_{L^2\II}\leq C (h^{\al-2+2\beta} + \tau^2t_n^{-1})t_n^{l-1}\| A^l v \|_{L^2\II}.
\end{equation*}
These error estimates cover both smooth and nonsmooth initial data and the bounds are directly
expressed in terms of the initial data $v$. The case of nonsmooth initial data is especially
interesting in inverse problems and optimal control.

The rest of the paper is organized as follows. In Section \ref{sec:prelim}, we introduce preliminaries on
fractional derivatives and related continuous and discrete variational formulations. Then in Section
\ref{sec:weak}, we discuss the existence and uniqueness of a weak solution to \eqref{eqn:fpde} using a
Galerkin procedure, and show the regularity pickup by the semigroup theory. Further, the properties
of the discrete semigroup $E_h(t)$ are discussed. The error analysis for the semidiscrete scheme is
carried out in Section \ref{sec:semidiscrete}, and that for fully discrete schemes based on the backward
Euler method and the Crank-Nicolson method is provided in Section \ref{sec:fullydiscrete}. Numerical
results for smooth and nonsmooth initial data are presented in Section \ref{sec:numeric}. Throughout, we
use the notation $c$ and $C$, with or without a subscript, to denote a generic constant, which may change at
different occurrences, but it is always independent of the solution $u$, time $t$, mesh size $h$
and time step size $\tau$.

\section{Fractional derivatives and variational formulation}\label{sec:prelim}
In this part, we describe fundamentals of fractional calculus, the variational problem for the source
problem with a Riemann-Liouville fractional derivative, and discuss the finite element discretization.
\subsection{Fractional derivatives}
We first briefly recall the Riemann-Liouville fractional derivative. For any positive non-integer real number
$\beta$ with $n-1 < \beta < n$, $n\in \mathbb{N}$, the left-sided Riemann-Liouville fractional
derivative $\DDR0\beta u$ of order $\beta$ of the function $u\in C^n[0,1]$ is defined by \cite[pp. 70]{KilbasSrivastavaTrujillo:2006}:
\begin{equation}\label{Riemann}
  \DDR0\beta u =\frac {d^n} {d x^n} \bigg({_0\hspace{-0.3mm}I^{n-\beta}_x} u\bigg) .
\end{equation}
Here $_0\hspace{-0.3mm}I^{\gamma}_x$ for $\gamma>0$
is the left-sided Riemann-Liouville fractional integral operator of order $\gamma$ defined by
\begin{equation*}
 ({\,_0\hspace{-0.3mm}I^\gamma_x} f) (x)= \frac 1{\Gamma(\gamma)} \int_0^x (x-t)^{\gamma-1} f(t)dt,
\end{equation*}
where $\Gamma(\cdot)$ is Euler's Gamma function defined by $\Gamma(x)=\int_0^\infty t^{x-1}e^{-t}dt$.
The right-sided versions of fractional-order integral and derivative are defined analogously, i.e.,
\begin{equation*}
  ({_x\hspace{-0.3mm}I^\gamma_1} f) (x)= \frac 1{\Gamma(\gamma)}\int_x^1 (x-t)^{\gamma-1}f(t)\,dt\quad\mbox{and}\quad
  \DDR1\beta u =(-1)^n\frac {d^n} {d x^n} \bigg({_x\hspace{-0.3mm}I^{n-\beta}_1} u\bigg) .
\end{equation*}

Now we introduce some function spaces. For any $\beta\ge 0$, we denote $H^\beta\II$ to be
the Sobolev space of order $\beta$ on the unit interval $D=(0,1)$, and $\Hd \beta $ to be the set of
functions in $H^\beta\II$ whose extension by zero to $\RR$ are in $H^\beta(\RR)$. Analogously, we define
$\Hdi 0 \beta$ (respectively, $\Hdi 1 \beta$) to be the set of functions $u$ whose extension by zero
$\tilde{u}$ is in $H^\beta(-\infty,1)$ (respectively, $H^\beta(0,\infty)$). Here for $u\in \Hdi 0
\beta$, we set $\|u\|_{\Hdi 0\beta}:=\|\tilde{u}\|_{H^\beta(-\infty,1)}$ with an analogous definition
for the norm in $\Hdi 1 \beta$. The fractional derivative operator $\DDR0\beta$ is well defined for functions in
$C^n[0,1]$, and can be extended continuously from $\Hdi0\alpha$ to $L^2\II$ (\cite[Lemma 2.6]{ErvinRoop:2006},
\cite[Theorem 2.2]{JinLazarovPasciak:2013a}).

\subsection{Variational formulation and its discretization}
Now we recall the variational formulation for the source problem
\begin{equation*}
  -\DDR0\alpha u  = f,
\end{equation*}
with $u(0)=u(1)=0$, and $f\in L^2\II$. The proper variational formulation is given by
\cite{JinLazarovPasciak:2013a}: find $u\in U\equiv \Hd{\alpha/2}$ such that
\begin{equation}\label{eqn:varrl}
  A(u,\psi) = \langle f,\psi\rangle \quad \forall \psi\in U,
\end{equation}
where the sesquilinear form $A(\cdot,\cdot)$ is given by
\begin{equation*}
   A(\fy,\psi)=-\left({\DDR0{\al/2}} \fy,\ \DDR1{\al/2}\psi\right).
\end{equation*}
It is known (\cite[Lemma 3.1]{ErvinRoop:2006}, \cite[Lemma 4.2]{JinLazarovPasciak:2013a}) that
the sesquilinear form $A(\cdot,\cdot)$ is coercive on the space $U$, i.e., there is
a constant $c_0$ such that for all $\psi\in U$
\begin{equation}\label{A-coercive}
  \Re A(\psi,\psi) \ge c_0 \| \psi\|^2_{U},
\end{equation}
where $\Re$ denotes taking the real part,
and continuous on $U$, i.e., for all $\fy,\psi\in U$
\begin{equation}\label{continuous}
    |A(\fy,\psi)| \le C_0 \| \fy \|_{U}\| \psi \|_{U}.
\end{equation}
Then by Riesz representation theorem, there exists a unique bounded linear
operator $\widetilde A: \Hd {\al/2} \rightarrow H^{-\al/2}\II$ such that
\begin{equation*}
   A(\fy,\psi)= \langle \widetilde A \fy, \psi \rangle ,\quad \forall \fy,\psi \in \Hd {\al/2}.
\end{equation*}
Define $D(A)=\{ \psi \in \Hd {\al/2}: \widetilde A \psi \in L^2\II\}$
and an operator $A : D(A)\rightarrow L^2\II$ by
\begin{equation}\label{eqn:A}
A(\fy,\psi)=(A\fy,\psi),\ \fy \in D(A),\, \psi \in \Hd {\al/2}.
\end{equation}

\begin{remark}\label{rmk::singular}
The domain $D(A)$ has a complicated structure: it consists of functions of the form $I_0^\alpha
f - (I_0^\alpha f)(1)x^{\alpha-1}$, where $f\in L^2\II$ \cite{JinLazarovPasciak:2013a}. The
term $x^{\alpha-1}\in\Hdi0{\alpha-1+\beta}$, $\beta\in[0,1/2)$, appears because it is in the
kernel of the operator $\DDR0\alpha$. Hence, $D(A) \subset \Hdi0{\al-1+\beta}\cap
\Hd {\al/2}$ and it is dense in $L^2\II$.
\end{remark}

The next result shows that the linear operator $A$ is sectorial, which means that
\begin{enumerate}
\item the resolvent set $\rho(A)$ contains the sector
$\Sigma_{\theta}=\left\{ z: \theta \le |\arg z| \le \pi \right\}$ for $\theta\in(0,\pi/2)$;
\item
$ \| (\la I-A)^{-1}  \| \le M/|\lambda|$ for $\lambda \in \Sigma_{\theta}$ and some constant $M$.
\end{enumerate}

Then we have the following important lemma (cf. \cite[pp.\,94, Theorem 3.6]{ItoKappel:2002}),
for which we sketch a proof for completeness.
\begin{lemma}\label{lem:sectorial}
The linear operator $A$ defined in \eqref{eqn:A} is sectorial on $L^2\II$.
\end{lemma}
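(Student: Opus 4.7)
The plan is to exploit the variational definition of $A$, together with the coercivity and continuity bounds \eqref{A-coercive}--\eqref{continuous}, to locate the values of the sesquilinear form $A(u,u)$ inside a sector of the right half-plane strictly smaller than $\{|\arg z|<\pi/2\}$, and then to convert this numerical range confinement into a resolvent estimate via a Lax--Milgram type argument applied to the shifted form $B_\lambda(u,\psi):=\lambda(u,\psi)-A(u,\psi)$.

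First I would verify that for every nonzero $u\in U$,
\[
\cos(\arg A(u,u))=\frac{\Re A(u,u)}{|A(u,u)|}\ge\frac{c_0\|u\|_U^2}{C_0\|u\|_U^2}=\frac{c_0}{C_0},
\]
so $|\arg A(u,u)|\le\theta_0:=\arccos(c_0/C_0)\in[0,\pi/2)$. Fix any $\theta\in(\theta_0,\pi/2)$. Because $\lambda\in\Sigma_\theta$ and $A(u,u)$ lie in angularly disjoint sectors with separation at least $\theta-\theta_0$, the identity $|\lambda-z|^2=|\lambda|^2+|z|^2-2|\lambda||z|\cos(\arg\lambda-\arg z)$ combined with $2|\lambda||z|\le|\lambda|^2+|z|^2$ yields
\[
\bigl|\lambda\|u\|^2-A(u,u)\bigr|\ge c_\theta\bigl(|\lambda|\|u\|^2+c_0\|u\|_U^2\bigr)\qquad\forall u\in U,\ \lambda\in\Sigma_\theta,
\]
with $c_\theta=\sin((\theta-\theta_0)/2)$, after bounding $|A(u,u)|\ge\Re A(u,u)\ge c_0\|u\|_U^2$.

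Given $g\in L^2(D)$, I would then formulate the variational problem: find $u\in U$ with $B_\lambda(u,\psi)=(g,\psi)$ for all $\psi\in U$. The lower bound above implies an inf-sup condition $\sup_{\psi\in U}|B_\lambda(u,\psi)|/\|\psi\|_U\ge c_\theta c_0\|u\|_U$ (by testing with $\psi=u$), and the analogous estimate for the adjoint form follows by symmetry; together with continuity on $U\times U$, the Babu\v{s}ka--Lax--Milgram theorem then produces a unique $u\in U$. Testing with $\psi=u$ and invoking the preceding estimate,
\[
|\lambda|\|u\|^2+c_0\|u\|_U^2\le c_\theta^{-1}|B_\lambda(u,u)|=c_\theta^{-1}|(g,u)|\le c_\theta^{-1}\|g\|_{L^2\II}\|u\|_{L^2\II},
\]
so dividing by $\|u\|_{L^2\II}$ gives $|\lambda|\|u\|_{L^2\II}\le c_\theta^{-1}\|g\|_{L^2\II}$. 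Since $\widetilde A u=\lambda u-g\in L^2\II$, the definition of $D(A)$ (see Remark~\ref{rmk::singular}) places $u$ in $D(A)$ with $(\lambda I-A)u=g$, establishing $\lambda\in\rho(A)$ together with the resolvent bound $\|(\lambda I-A)^{-1}\|\le c_\theta^{-1}/|\lambda|$.

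The main obstacle I foresee is the Babu\v{s}ka--Lax--Milgram step: the shifted form $B_\lambda$ is not coercive in the classical real-part sense for arbitrary $\lambda\in\Sigma_\theta$, and the lower bound one obtains controls $|B_\lambda(u,u)|$ rather than $\Re B_\lambda(u,u)$. One must therefore either rotate the form by an appropriate $\lambda$-dependent phase (to convert the numerical range inclusion into real coercivity) or work directly with the inf-sup formulation above. Care is also needed in tracking the uniform dependence of the constant $c_\theta$ on $\lambda\in\Sigma_\theta$, so that the resulting resolvent bound carries the required $|\lambda|^{-1}$ decay rather than merely boundedness.
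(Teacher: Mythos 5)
Your proof is correct, and its first half coincides with the paper's: both arguments use \eqref{A-coercive} and \eqref{continuous} to confine the numerical range of $A$ to the sector $\{|\arg z|\le \arccos(c_0/C_0)\}$. Where you diverge is in converting that confinement into the resolvent estimate. The paper simply invokes an abstract result (Haase, Prop.~C.3.1) stating that the resolvent bound $\|(\lambda I-A)^{-1}\|\le 1/\mathrm{dist}(\lambda,\overline{\mathcal{N}(A)})$ holds on any component of the complement of the numerical range that meets $\rho(A)$; you instead solve the resolvent equation directly, proving the quantitative separation bound $|\lambda\|u\|^2-A(u,u)|\ge c_\theta(|\lambda|\|u\|^2+c_0\|u\|_U^2)$ and feeding it into a Babu\v{s}ka--Lax--Milgram (inf-sup) argument for the shifted form $B_\lambda$. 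Your geometric estimate is right ($|a-b|^2\ge(1-\cos(\theta-\theta_0))(|a|^2+|b|^2)\ge\sin^2((\theta-\theta_0)/2)(|a|+|b|)^2$), the inf-sup condition and its adjoint counterpart do follow from $|B_\lambda(u,u)|\ge c_\theta c_0\|u\|_U^2$, and the passage from the variational solution to $u\in D(A)$ via $\widetilde Au=\lambda u-g\in L^2\II$ is exactly how $D(A)$ is defined. Your route buys two things: it is self-contained (no citation to the numerical-range/resolvent theorem), and it explicitly supplies the nonemptiness of $\rho(A)\cap\Sigma_\theta$ that the abstract proposition tacitly presupposes and the paper leaves to the solvability of the source problem. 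It also anticipates the paper's own Lemma~\ref{lem:werror}, which establishes essentially your separation inequality for use in the error analysis; the cost is a longer argument and a constant $c_\theta^{-1}=1/\sin((\theta-\theta_0)/2)$ slightly worse than the paper's $1/\sin(\delta_1-\delta_0)$, which is immaterial here.
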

\begin{proof}
For all $\fy \in D(A)$,
we obtain by \eqref{A-coercive} and \eqref{continuous}
\begin{equation*}
\begin{split}
|(A\fy,\fy)|\le C_0\| \fy \|_{\Hd{\al/2}}^2 \le \frac{C_0}{c_0}\Re(A\fy,\fy).
\end{split}
\end{equation*}
Thus $\mathcal{N}(A)$, the numerical range of $A$, which is defined by
\begin{equation*}
    \mathcal{N}(A)=\left\{ (A\fy,\fy): \fy \in D(A)
    ~~\text{and}~~\| \fy \|_{L^2\II}=1 \right\},
\end{equation*}
is included in the sector
$\Sigma_0 = \left\{z:0\le |\arg(z)|\le \delta_0\right\}$,
with $\delta_0=\arccos\left(c_0/C_0\right)$.

Now we choose $\delta_1\in(\delta_0,\frac{\pi}{2})$ and set
$\Sigma_{\delta_1}=\left\{z:\delta_1\le |\arg(z)| \le \pi\right\}$.
Then by \cite[p.\,310, Propositon C.3.1]{Hasse_book},
the resolvent set $\rho(A)$ contains $\Sigma_{\delta_1}$
and for all $\la\in \Sigma_{\delta_1}$
\begin{equation*}
\| (\la I-A)^{-1}  \|\le \frac{1}{\text{dist}(\la,\overline{\mathcal{N}(A)})}
\le \frac{1}{\text{dist}(\la,\Sigma_0)}\le  \frac{1}{\sin(\delta_1-\delta_0)}\frac{1}{|\la|}.
\end{equation*}
That completes the proof of this lemma.
\end{proof}

The next corollary is an immediate consequence of Lemma \ref{lem:sectorial}.
\begin{corollary}\label{cor:semigroup}
The linear operator $A$ is the infinitesimal generator of an analytic semigroup
$E(t)=e^{-At}$ on $L^2\II$.
\end{corollary}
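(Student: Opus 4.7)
My plan is to invoke the standard theorem from analytic semigroup theory that every sectorial operator on a Banach space is the infinitesimal generator of an analytic semigroup. Lemma \ref{lem:sectorial} has just verified the two defining properties of sectoriality for the operator $A$ defined in \eqref{eqn:A}, namely that $\rho(A)$ contains a sector $\Sigma_{\delta_1}$ for some $\delta_1 \in (0,\pi/2)$, and that the resolvent satisfies the bound $\|(\lambda I - A)^{-1}\| \le M/|\lambda|$ for $\lambda \in \Sigma_{\delta_1}$. Consequently, the corollary follows at once from \cite[p.\,94, Theorem 3.6]{ItoKappel:2002}, which is exactly the statement that sectorial operators generate analytic semigroups on the underlying space. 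This is why the authors label the result an immediate consequence.

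For a self-contained argument, I would construct the semigroup explicitly via the Dunford--Taylor contour integral
\[
E(t) = \frac{1}{2\pi i}\int_\Gamma e^{-zt}(zI-A)^{-1}\,dz,
\]
where $\Gamma$ is a piecewise smooth contour in $\rho(A)$ enclosing the numerical range $\mathcal{N}(A)\subset \Sigma_0$ and going to infinity within the open right half-plane so that $|e^{-zt}|$ decays along its tails (for instance, the boundary of $\{z: |\arg z|\le \delta\}$ for some $\delta\in(\delta_0,\pi/2)$, oriented counterclockwise). The resolvent bound from Lemma \ref{lem:sectorial} ensures absolute convergence of the integral for each $t>0$ and permits differentiation in $t$ to any order under the integral sign, which already yields the analytic dependence of $t\mapsto E(t)$ in a sector about the positive real axis upon rotating $\Gamma$ as $t$ varies.

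The remaining tasks are classical: (i) verify the semigroup law $E(t+s) = E(t)E(s)$ by writing the product as a double contour integral and collapsing one of the contours using the resolvent identity $(zI-A)^{-1}(wI-A)^{-1} = \bigl[(zI-A)^{-1} - (wI-A)^{-1}\bigr]/(w-z)$; (ii) identify the infinitesimal generator as $-A$ by computing $\lim_{t\to 0^+} t^{-1}(E(t)\varphi - \varphi)$ for $\varphi \in D(A)$, once again via the resolvent identity and a contour shift; and (iii) extend strong continuity from the dense subspace $D(A)$ (see Remark \ref{rmk::singular}) to all of $L^2\II$ by a density and uniform-boundedness argument. The main (and only mild) obstacle is the contour-manipulation bookkeeping in (i)--(iii); no analytic content is required beyond the sectoriality already established, which is precisely why the abstract theorem of \cite{ItoKappel:2002} makes the corollary immediate.
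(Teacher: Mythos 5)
Your proposal is correct and matches the paper's proof, which likewise disposes of the corollary in one line by combining Lemma \ref{lem:sectorial} with the standard fact from \cite{ItoKappel:2002} that sectorial operators generate analytic semigroups (the paper cites Theorem 3.4, Proposition 3.9 and Theorem 3.19 there rather than Theorem 3.6, but this is immaterial). Your additional Dunford--Taylor sketch is the standard proof of that abstract fact and adds nothing the paper needs, though it is accurate.
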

\begin{proof}
It follows directly from Lemma \ref{lem:sectorial} and standard semigroup theory, cf.,
\cite[Theorem 3.4, Proposition 3.9 and Theorem 3.19]{ItoKappel:2002}.
\end{proof}

\subsection{Finite element discretization}\label{FEM}
We introduce a finite element approximation based on an equally spaced partition of the interval
$D$. We let $h=1/m $ be the mesh size with $m>1$ being a positive integer, and consider the nodes
$x_j=jh$, $j=0,\ldots,m$. We then define $U_h$ to be the set of continuous
functions in $U$ which are linear when restricted to the subintervals $[x_i,x_{i+1}]$,
$i=0,\ldots,m-1$, i.e.,
\begin{equation*}
   U_h = \left\{\chi\in C_0(\overline{D}): \chi \mbox{ is linear over }  [x_i,x_{i+1}], \, i=0,\ldots,m \right\}.
\end{equation*}

We define the discrete operator $A_h: U_h \rightarrow U_h$ by
\begin{equation*}
(A_h \fy, \chi) = A(\fy,\chi),\quad \forall \fy,\chi \in U_h.
\end{equation*}

The lemma below is a direct corollary of properties \eqref{A-coercive} and \eqref{continuous}
of the bilinear form $A(\cdot,\cdot) $:
\begin{lemma}\label{lem:Ah}
The discrete operator $A_h$ satisfies
\begin{equation*}
\begin{split}
    \Re(A_h\psi,\psi) &\ge c_0\|\psi \|_{\Hd {\al/2}}^2,\quad \psi \in U_h,\\
    |(A_h\fy,\psi)| &\le C_0\|\fy \|_{\Hd {\al/2}}\|\psi \|_{\Hd{\al/2}},\quad \fy,\psi \in U_h.
\end{split}
\end{equation*}
\end{lemma}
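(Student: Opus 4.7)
The lemma is an immediate corollary of two facts: the defining identity $(A_h\varphi,\chi) = A(\varphi,\chi)$ valid for all $\varphi,\chi \in U_h$, together with the inclusion $U_h \subset U = \Hd{\alpha/2}$. Given these, both assertions follow by restricting the already established continuous estimates \eqref{A-coercive} and \eqref{continuous} for the sesquilinear form $A(\cdot,\cdot)$ to the finite-dimensional subspace $U_h$; no new estimate needs to be derived.

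More precisely, for the coercivity bound I would take $\psi\in U_h$ arbitrarily, use the definition of $A_h$ to rewrite $\Re(A_h\psi,\psi) = \Re A(\psi,\psi)$, and then invoke \eqref{A-coercive} with constant $c_0$ directly, since $\psi$ qualifies as an element of $U$. For the continuity bound I would similarly take $\varphi,\psi \in U_h$, write $(A_h\varphi,\psi) = A(\varphi,\psi)$, and apply \eqref{continuous} with constant $C_0$.

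There is essentially no obstacle here: the whole substance of the statement lies in the coercivity and continuity of $A(\cdot,\cdot)$, which are quoted results, and the constants $c_0, C_0$ on $U_h$ are inherited verbatim from the continuous space $U$ without any $h$-dependence. The only point worth noting explicitly is that $U_h$ genuinely lies in $\Hd{\alpha/2}$: since each $\chi\in U_h$ is continuous, piecewise linear, and vanishes at the endpoints $x=0,1$, its extension by zero to $\RR$ is globally Lipschitz with compact support, hence lies in $H^1(\RR)$ and in particular in $H^{\alpha/2}(\RR)$ for $\alpha/2 < 1$, so $\chi\in \Hd{\alpha/2}=U$ as required.
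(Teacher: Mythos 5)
Your argument is exactly the paper's: the lemma is stated there as a direct corollary of \eqref{A-coercive} and \eqref{continuous} via the defining identity $(A_h\fy,\chi)=A(\fy,\chi)$ and the inclusion $U_h\subset U$, which is precisely what you do. The added remark verifying $U_h\subset\Hd{\alpha/2}$ is correct and harmless, though the paper already builds this into the definition of $U_h$.
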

\begin{remark}\label{rmk:Ahsectorial}
By Lemma \ref{lem:Ah} and repeating the argument in the proof of Lemma
\ref{lem:sectorial}, we can show that $A_h$ is a sectorial operator on
$U_h$ with the same constant as $A$.
\end{remark}

Next we recall the Ritz projection $R_h:\Hd{\alpha/2} \rightarrow U_h$
and the $L^2\II$-projection $P_h: L^2\II\rightarrow U_h$, respectively, defined by
\begin{equation}\label{eqn:Ritz}
  \begin{aligned}
    A(R_h\psi,\chi)&=A(\psi,\chi) \quad \forall \psi\in \Hd{\alpha/2}, \ \chi\in U_h,\\
    (P_h\fy,\chi) &= (\fy,\chi)\quad \forall \fy\in L^2\II,\ \chi\in U_h.
  \end{aligned}
\end{equation}

We shall also need the adjoint problem in the error analysis.
Similar to \eqref{eqn:A}, we define the adjoint operator $A^*$ as
\begin{equation*}
    A(\fy,\psi)=(\fy,A^*\psi),\quad \forall \fy \in \Hd{\alpha/2},\ \psi \in D(A^*),
\end{equation*}
where the domain $D(A^*)$ of $A^*$ satisfies $D(A^*) \subset \widetilde
H_R^{\al-1+\beta}\II\cap \Hd {\al/2}$ and it is dense in $L^2\II$.
Further, the discrete analogue $A_h^*$ of $A^*$ is defined by
\begin{equation*}
    A(\fy,\psi)=(\fy,A_h^*\psi),\quad \forall \fy, \psi \in U_h.
\end{equation*}

\section{Variational formulation of fractional-order parabolic problem}\label{sec:weak}
The variational formulation of problem \eqref{eqn:fpde} is to find $u(t)\in U$ such that
\begin{equation}\label{variational}
   (u_t,\fy) + A(u,\fy) = (f,\fy) \quad \forall \fy \in U,
\end{equation}
and $u(0)=v$. We shall establish the well-posedness of the variational formulation \eqref{variational}
using a Galerkin procedure, and an enhanced regularity estimate via analytic semigroup theory. Further,
the properties of the discrete semigroup are discussed.

\subsection{Existence and uniqueness of the weak solution}

First we state an existence and uniqueness of a weak solution, following a Galerkin
procedure \cite{Evans:2010}. To this end, we choose an orthogonal basis $\{\omega_k(x)=
\sqrt{2}\sin k\pi x\}$ in both $L^2\II$ and $H_0^1\II$ and orthonormal in $L^2\II$. In
particular, by the construction, the $L^2\II$-orthogonal projection operator $P$ into
$\mathrm{span}\{\omega_k\}$ is stable in both $L^2\II$ and $H_0^1\II$, and by interpolation,
it is also stable in $\Hd \beta$ for any $\beta\in[0,1]$. Now we fix a positive integer
$m$, and look for a solution $u_m(t)$ of the form
\begin{equation*}
u_m(t):= \sum_{k=1}^m c_k(t) \omega_k
\end{equation*}
such that for $k=1,2\ldots,m$
\begin{equation}\label{eqn:appr1}
    c_k(0)=(v,\omega_k), \quad
    (u_m',\omega_k)+A(u_m,\omega_k)=(f,\omega_k),\quad 0\leq t\leq T.
\end{equation}
The existence and uniqueness of $u_m$ follows directly from the standard theory for ordinary
differential equation systems. With the finite-dimensional approximation $u_m$ at hand,
one can deduce the following existence and uniqueness result. The proof is rather
standard, and it is given in Appendix \ref{app:existence} for completeness.

\begin{theorem}\label{thm:existence}
Let $f\in L^2(0,T;L^2\II)$ and $v\in L^2\II$. Then there exists a unique
weak solution $u\in L^2(0,T;\Hd{\alpha/2})$ of \eqref{variational}.
\end{theorem}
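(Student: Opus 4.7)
The plan is to pass to the limit in the Galerkin approximations $u_m$ already constructed in \eqref{eqn:appr1}, following the standard parabolic Galerkin procedure (cf.\ \cite{Evans:2010}) adapted to the fractional setting. The argument has three ingredients: a uniform energy bound on $u_m$, a uniform dual-norm bound on $u_m'$, and a compactness/passage-to-the-limit step; uniqueness then follows from a short Gr\"{o}nwall estimate.

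For the energy estimate, multiply the second identity in \eqref{eqn:appr1} by $\overline{c_k(t)}$, sum over $k=1,\dots,m$, and take real parts to obtain
\[
   \tfrac{1}{2}\tfrac{d}{dt}\|u_m(t)\|_{L^2\II}^2 + \Re A(u_m,u_m) = \Re (f,u_m).
\]
Coercivity \eqref{A-coercive} gives $\Re A(u_m,u_m)\ge c_0\|u_m\|_{\Hd{\al/2}}^2$, while Cauchy--Schwarz and Young's inequality control the right-hand side. Integrating in time, and using $\|u_m(0)\|_{L^2\II}\le\|v\|_{L^2\II}$ (the $L^2\II$-stability of the projection $P$ onto $\mathrm{span}\{\omega_1,\dots,\omega_m\}$), yields
\[
   \|u_m\|_{L^\infty(0,T;L^2\II)} + \|u_m\|_{L^2(0,T;\Hd{\al/2})} \le C\bigl(\|v\|_{L^2\II} + \|f\|_{L^2(0,T;L^2\II)}\bigr).
\]

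Next, I would bound $u_m'$ in the dual $U^{*}$ of $U=\Hd{\al/2}$. For any $\fy\in U$, since $u_m'(t)\in\mathrm{span}\{\omega_1,\dots,\omega_m\}$, I insert the projection: $(u_m',\fy)=(u_m',P\fy)=(f,P\fy)-A(u_m,P\fy)$. The continuity \eqref{continuous} of $A$, combined with the stability of $P$ on $\Hd{\al/2}$ (noted in the excerpt), gives $\|u_m'\|_{L^2(0,T;U^{*})}\le C$. Extracting a subsequence (not relabeled) with $u_m\rightharpoonup u$ in $L^2(0,T;U)$ and $u_m'\rightharpoonup u'$ in $L^2(0,T;U^{*})$, passing to the limit in \eqref{eqn:appr1} tested against each fixed $\omega_k$, and finally using density of $\bigcup_m\mathrm{span}\{\omega_k\}_{k=1}^m$ in $U$, shows that $u$ satisfies \eqref{variational} for all $\fy\in U$. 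The embedding $\{w\in L^2(0,T;U): w'\in L^2(0,T;U^{*})\}\hookrightarrow C([0,T];L^2\II)$ makes $u(0)$ meaningful, and $u_m(0)\to v$ in $L^2\II$ delivers $u(0)=v$.

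For uniqueness, the difference $w$ of two weak solutions lies in $L^2(0,T;U)$ with $w'\in L^2(0,T;U^{*})$ and satisfies the homogeneous equation with $w(0)=0$; testing with $w$ itself, invoking coercivity \eqref{A-coercive}, and applying Gr\"{o}nwall's inequality forces $w\equiv 0$. The only real subtlety is the $u_m'$-bound and the subsequent passage to the limit in $A(u_m,P\fy)$; both hinge on the uniform $\Hd{\al/2}$-boundedness of $P$, which is precisely why the authors chose the trigonometric basis $\omega_k=\sqrt{2}\sin k\pi x$ (stable in $L^2\II$ and $H_0^1\II$, hence by interpolation in $\Hd{\beta}$ for every $\beta\in[0,1]$).
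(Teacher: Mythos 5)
Your proposal is correct and follows essentially the same route as the paper's own proof in Appendix A: energy estimates for the Galerkin approximations $u_m$ via coercivity, a dual-norm bound on $u_m'$ exploiting the $\Hd{\al/2}$-stability of the projection $P$ onto $\mathrm{span}\{\omega_k\}$, weak compactness and passage to the limit, and recovery of the initial condition through the continuous embedding into $C([0,T];L^2\II)$. The only cosmetic difference is that the paper phrases the right-hand-side bound in terms of $\|f\|_{L^2(0,T;H^{-\al/2}\II)}$ and cites the energy estimates directly for uniqueness, whereas you use $\|f\|_{L^2(0,T;L^2\II)}$ and Gr\"{o}nwall; both are equivalent here.
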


\def\chii{{\chi}}
Now we study the regularity of the solution $u$ using semigroup theory \cite{ItoKappel:2002}.
By Corollary \ref{cor:semigroup} and the classical semigroup theory, the solution $u$ to the
initial boundary value problem \eqref{eqn:fpde} with $f\equiv0$ can be represented as
\begin{equation*}
  u(t) = E(t) v,
\end{equation*}
where $E(t)=e^{-tA}$ is the semigroup generated by the sectorial operator $A$, cf. Corollary
\ref{cor:semigroup}.
Then we have an improved regularity by \cite[p.\,104, Corolary 1.5]{Pazy_book}.
\begin{theorem}\label{thm:fullreg}
For every $v \in L^2\II$, the homogeneous initial-boundary value problem \eqref{variational}
(with $f=0$) has a unique solution $u(x,t) \in C([0,T];L^2\II)\cap C((0,T];D(A))$.
\end{theorem}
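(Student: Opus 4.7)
The plan is to construct the solution explicitly via the analytic semigroup $E(t)=e^{-tA}$ furnished by Corollary \ref{cor:semigroup}, and then read off the stated regularity from the standard smoothing properties of analytic semigroups. Concretely, for $v\in L^2\II$ I would set
\begin{equation*}
u(t) := E(t)v,\qquad t\in[0,T],
\end{equation*}
and verify that this $u$ belongs to $C([0,T];L^2\II)\cap C((0,T];D(A))$ and solves \eqref{variational} with $f=0$.

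First I would handle the $L^2\II$-continuity. Strong continuity of $E(t)$ at $t=0$ (part of the definition of a $C_0$-semigroup) gives $u\in C([0,T];L^2\II)$ with $u(0)=v$. Next I would use the analyticity of $E(t)$, a consequence of $A$ being sectorial (Lemma \ref{lem:sectorial}): for every $t>0$ one has $E(t)v\in D(A^k)$ for all $k\geq 1$ together with the quantitative bound $\|AE(t)v\|_{L^2\II}\le C t^{-1}\|v\|_{L^2\II}$. In particular, $u(t)\in D(A)$ for each $t>0$. Continuity in the graph norm for $t>0$ then follows from the identity
\begin{equation*}
AE(t)v-AE(s)v = \bigl(E(t-s)-I\bigr)AE(s)v\qquad\text{for } t>s>0,
\end{equation*}
combined with the strong continuity of $E$ applied to the element $AE(s)v\in L^2\II$; a symmetric argument handles $s>t>0$. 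This is precisely the content of Pazy's Corollary cited in the text, so this step is largely bookkeeping.

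To show $u$ actually solves the variational problem, I would differentiate $E(t)v$ in $t$: on $D(A)$ one has $\tfrac{d}{dt}E(t)v = -AE(t)v$ in $L^2\II$ for $t>0$, and by definition \eqref{eqn:A} of $A$, for any $\fy\in U=\Hd{\alpha/2}$,
\begin{equation*}
(u_t(t),\fy) = -(Au(t),\fy) = -A(u(t),\fy),
\end{equation*}
which is \eqref{variational} with $f=0$. Uniqueness is immediate from Theorem \ref{thm:existence}: any two solutions in $L^2(0,T;\Hd{\alpha/2})$ with the same initial datum $v$ agree, and since $C([0,T];L^2\II)\cap C((0,T];D(A))\subset L^2(0,T;\Hd{\alpha/2})$ the uniqueness applies to the class considered here.

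The step most likely to need care is the continuity of $u(t)$ in $D(A)$ up to the endpoint $t=T$ rather than merely pointwise in $t>0$; but this follows from the semigroup identity above together with the uniform boundedness of $\|AE(s)v\|_{L^2\II}$ on any compact subinterval of $(0,T]$. The verification that the abstract semigroup solution coincides with the weak solution is routine once $A$ is identified with the operator induced by the coercive form $A(\cdot,\cdot)$, which was done in \eqref{eqn:A}. Hence the entire argument reduces to invoking analytic semigroup theory for the sectorial generator $A$ constructed in Section \ref{sec:prelim}.
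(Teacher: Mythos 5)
Your argument is correct and coincides with the paper's: the paper likewise represents the solution as $u(t)=E(t)v$ with the analytic semigroup generated by the sectorial operator $A$ (Corollary \ref{cor:semigroup}) and simply cites Pazy's Corollary 1.5 for the stated regularity, which is exactly the smoothing and graph-norm continuity bookkeeping you spell out. The only cosmetic point is that the inclusion $C([0,T];L^2\II)\cap C((0,T];D(A))\subset L^2(0,T;\Hd{\alpha/2})$ is not a bare function-space containment (the $\Hd{\alpha/2}$-norm of such a function need not be square-integrable near $t=0$); for solutions it follows from the energy identity, or one can establish uniqueness in your class directly by the same energy argument used in Theorem \ref{thm:existence}.
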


Further, we have the following $L^2\II$ estimate.
\begin{lemma}\label{lem:smoothing1}
There is a constant C such that
\begin{equation*}
   \| A^{\ga} E(t) \psi \|_{L^2\II} \le Ct^{-\ga} \|\psi\|_{L^2\II}.
\end{equation*}
\end{lemma}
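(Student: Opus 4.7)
The plan is to represent $A^\ga E(t)$ as a Dunford--Riesz contour integral and then bound it by an elementary rescaling calculation. The key ingredients are the analyticity of $E(t)=e^{-tA}$ given by Corollary~\ref{cor:semigroup}, together with the resolvent estimate $\|(\la I - A)^{-1}\|\le M/|\la|$ on $\Sigma_\theta$ furnished by Lemma~\ref{lem:sectorial}.

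First, I fix any angle $\theta\in(\delta_0,\pi/2)$ and take $\Gamma$ to be the positively-oriented Hankel contour around $\sigma(A)\subset\Sigma_0$, consisting of the two outgoing rays $\Gamma_\pm=\{re^{\pm i\theta}:r\ge 1/t\}$ joined by the circular arc $\Gamma_0=\{t^{-1}e^{i\phi}:|\phi|\le\theta\}$. By construction $\Gamma$ lies in the resolvent set of $A$ and stays away from the branch cut $(-\infty,0]$ since $\theta<\pi/2$. Standard Dunford functional calculus for sectorial operators (see, e.g., \cite[Chapter 2]{Pazy_book}) then yields, for every $\ga\ge 0$,
\begin{equation*}
A^\ga E(t) = \frac{1}{2\pi i}\int_\Gamma z^\ga e^{-zt}(zI-A)^{-1}\,dz,
\end{equation*}
where $z^\ga$ is taken with the principal branch and the integral converges absolutely in operator norm.

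Next, I bound the three pieces of $\Gamma$ separately using Lemma~\ref{lem:sectorial}. On $\Gamma_\pm$, the parametrisation $z=re^{\pm i\theta}$ gives $|e^{-zt}|=e^{-rt\cos\theta}$ with $\cos\theta>0$, so after the substitution $s=rt\cos\theta$,
\begin{equation*}
\left\|\frac{1}{2\pi i}\int_{\Gamma_\pm} z^\ga e^{-zt}(zI-A)^{-1}\,dz\right\|
\le \frac{M}{2\pi}\int_{1/t}^\infty r^{\ga-1}e^{-rt\cos\theta}\,dr
= \frac{M}{2\pi(\cos\theta)^\ga\, t^\ga}\int_{\cos\theta}^\infty s^{\ga-1}e^{-s}\,ds \le C t^{-\ga}.
\end{equation*}
On the arc $\Gamma_0$, the parametrisation $z=t^{-1}e^{i\phi}$ gives $|z|^\ga=t^{-\ga}$, $|dz|=t^{-1}d\phi$, and $|e^{-zt}|\le e$, while $\|(zI-A)^{-1}\|\le M/|z|=Mt$, so the arc contribution is bounded by $(M e/\pi)\theta\, t^{-\ga}\le C t^{-\ga}$. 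Summing the three contributions and applying the resulting operator to $\psi$ gives $\|A^\ga E(t)\psi\|_{L^2\II}\le Ct^{-\ga}\|\psi\|_{L^2\II}$.

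The only nontrivial point is justifying the contour-integral representation of $A^\ga E(t)$ when $\ga$ is not an integer; this requires interpreting $A^\ga$ via fractional powers of the sectorial operator $A$ (Balakrishnan integral or functional calculus) and commuting it with the Dunford representation of $E(t)$. The necessary framework is standard and can be read off from \cite[Chapter 2]{Pazy_book}, so I would invoke it rather than reprove it. The remaining calculation above is a textbook scaling argument once the representation is in hand.
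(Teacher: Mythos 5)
There is a genuine gap: the connecting arc of your contour is drawn on the wrong side of the spectrum. You take $\Gamma_0=\{t^{-1}e^{i\phi}:|\phi|\le\theta\}$, which sweeps through the sector $\{|\arg z|\le\theta\}\supset\Sigma_0$ containing the numerical range of $A$. On that arc neither the existence of $(zI-A)^{-1}$ nor the bound $\|(zI-A)^{-1}\|\le M/|z|$ is available — Lemma \ref{lem:sectorial} only guarantees both for $|\arg z|\ge\delta_1$ — so your claim that ``by construction $\Gamma$ lies in the resolvent set'' is unjustified. Worse, even if the arc happened to avoid $\sigma(A)$, the resulting contour encloses only $\{|z|\ge t^{-1},\,|\arg z|\le\theta\}$ and therefore misses any spectrum of modulus less than $t^{-1}$; the Dunford integral over it then does not represent $A^{\gamma}E(t)$. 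The repair is simply to route the arc the other way, $\Gamma_0=\{t^{-1}e^{i\phi}:\theta\le|\phi|\le\pi\}$, through the left half-plane where the resolvent estimate holds; this is exactly the contour $\Gamma_{\delta_1}^t$ the paper itself uses in the proof of Theorem \ref{thm:semismooth}, and your bound $|e^{-zt}|\le e$ (which is an overestimate on your front arc but exactly what is needed on the back arc, where $\Re(zt)=\cos\phi$ can be negative) together with the ray computation then goes through verbatim.

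Once the arc is corrected, your route differs from the paper's only in the treatment of the origin: the paper keeps the contour $\{\rho e^{\pm i\delta_1}:\rho\ge0\}$ passing through $z=0$, which restricts the direct computation to $\gamma\in(0,1)$ (integrability of $\rho^{\gamma-1}$ at $\rho=0$) and handles $\gamma=0,1$ by citing Thom\'ee, whereas the truncated contour treats all $\gamma\ge0$ uniformly at the cost of the extra arc estimate. Your final caveat about justifying the identity $A^{\gamma}E(t)=\frac{1}{2\pi i}\int_\Gamma z^{\gamma}e^{-zt}(zI-A)^{-1}\,dz$ for non-integer $\gamma$ via the fractional-power calculus is appropriate and matches what the paper implicitly assumes.
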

\begin{proof}
The cases $\ga=0$ and $\ga=1$ have been proved in \cite[pp. 91, Theorem 6.4 (iii)]{Thomee:2006}.
With the contour $\Gamma=\left\{z:z=\rho e^{\pm \mathrm{i}\delta_1}, \rho\ge 0 \right\}$,
the case of $\gamma\in(0,1)$ follows by
\begin{equation*}
    \begin{split}
       \| A^{\ga} E(t) \psi \|_{L^2\II}
       & = \left| \hspace{-0.5mm}\left|\frac1{2\pi \mathrm{i}} \int_{\Gamma} z^{\ga} e^{-zt} R(z;A) \psi \, dz \right|\hspace{-0.5mm}\right|_{L^2\II}\\
        & \le C\|\psi \|_{L^2\II} \int_0^{\infty} \rho^{\ga-1}e^{-\rho t}  d\rho\le C t^{-\ga}\|\psi \|_{L^2\II}.
    \end{split}
\end{equation*}
\end{proof}

\subsection{Properties of the semigroup $E_h(t)$}

Let $E_h(t)=e^{-A_ht}$ be the semigroup generated by the operator $A_h$.
Then it satisfies a discrete analogue of Lemma \ref{lem:smoothing1}.
\begin{lemma}\label{lem:Ahprop4}
There exists a constant $C>0$ 
such that for $\chi \in U_h$
\begin{equation*}
    \| A_h^{\ga}E_h(t)\chi \|_{L^2\II}  \le Ct^{-\ga} \| \chi \|_{L^2\II}.
\end{equation*}
\end{lemma}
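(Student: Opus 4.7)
The proof will mirror that of Lemma \ref{lem:smoothing1} almost verbatim, with the crucial structural input being Remark \ref{rmk:Ahsectorial}: the discrete operator $A_h$ is sectorial on $U_h$ with exactly the same constants $\delta_0$, $\delta_1$ as $A$. Consequently, the resolvent set $\rho(A_h)$ contains the sector $\Sigma_{\delta_1}=\{z:\delta_1\le|\arg z|\le\pi\}$, and the resolvent estimate
\[
   \|(\la I-A_h)^{-1}\|_{L^2\II\to L^2\II}\le \frac{M}{|\la|},\qquad \la\in\Sigma_{\delta_1},
\]
holds with a constant $M$ that does not depend on the mesh size $h$. It is this $h$-independence that makes the statement nontrivial; everything else is a transcription of the continuous argument.

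For the endpoint $\ga=0$, the bound $\|E_h(t)\chi\|_{L^2\II}\le C\|\chi\|_{L^2\II}$ follows immediately since $A_h$ generates a (uniformly) bounded analytic semigroup on $U_h$, a standard consequence of sectoriality. For $\ga\in(0,1]$, I would invoke the Dunford--Taylor representation
\[
   A_h^{\ga} E_h(t) \;=\; \frac{1}{2\pi \mathrm{i}}\int_{\Ga} z^{\ga} e^{-zt}(zI-A_h)^{-1}\, dz,
\]
using the same Hankel-type contour $\Ga=\{\rho e^{\pm \mathrm{i}\delta_1}:\rho\ge 0\}$ that appears in the proof of Lemma \ref{lem:smoothing1}. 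Inserting the uniform resolvent bound and the estimate $|e^{-zt}|=e^{-\rho t\cos\delta_1}$ along $\Ga$, one obtains
\[
   \|A_h^{\ga}E_h(t)\chi\|_{L^2\II} \;\le\; \frac{M}{\pi}\|\chi\|_{L^2\II}\int_0^{\infty}\rho^{\ga-1}e^{-\rho t\cos\delta_1}\,d\rho \;\le\; C\, t^{-\ga}\|\chi\|_{L^2\II},
\]
where $C$ depends only on $\ga$, $\delta_1$ and $M$; the integral is finite precisely because $\ga>0$.

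There is no serious obstacle to overcome — the main point is to be explicit that the fractional power $A_h^{\ga}$ (which a priori makes sense in $U_h$ simply because $A_h$ is a sectorial operator on the finite-dimensional space $U_h$ with spectrum off the sector $\Sigma_0$) can be written by the same functional calculus as $A^{\ga}$, and that the constants in the resulting estimate inherit their $h$-independence from the $h$-independence of the sectoriality constants in Remark \ref{rmk:Ahsectorial}. The range of $\ga$ covered by this argument is $[0,1]$, matching the range implicit in Lemma \ref{lem:smoothing1}.
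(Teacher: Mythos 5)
Your proposal is correct and follows exactly the paper's route: the paper's proof of Lemma \ref{lem:Ahprop4} is a one-line reduction to Remark \ref{rmk:Ahsectorial} (uniform, $h$-independent sectoriality of $A_h$) combined with the contour-integral argument of Lemma \ref{lem:smoothing1}, which is precisely what you spell out. The only difference is that you make explicit the details the paper leaves implicit, in particular why the constants are independent of $h$.
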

\begin{proof}
It follows directly from Remark \ref{rmk:Ahsectorial} and Lemma \ref{lem:smoothing1}.
\end{proof}

Last we recall the Dunford-Taylor spectral representation of a rational function $r(A_h)$ of
the operator $A_h$, when $r(z)$ is bounded in a sector in the right half
plane \cite[Lemma 9.1]{Thomee:2006}.
\begin{lemma}\label{lem:semigroup}
Let $r(z)$ be a rational function that is bounded for $|\arg z|\leq \delta_1$, $|z|\geq\epsilon>0$,
and for $|z|\geq R$. Then if $\epsilon>0$ is so small that $\{z: |z|\leq\epsilon \}\subset \rho(A_h)$, we have
\begin{equation*}
  r(A_h) = r(\infty)I + \frac{1}{2\pi\mathrm{i}}\int_{\Gamma_\epsilon\cup\Gamma_\epsilon^R\cup\Gamma^R}r(z)R(z;A_h)dz,
\end{equation*}
where $R(z;A_h)=(zI-A_h)^{-1}$ is the resolvent operator, $\Gamma_\epsilon^R = \{z:
|\arg z|=\delta_1, \epsilon\leq |z|\leq R \}$, $\Gamma_\epsilon = \{z: |z|=\epsilon,\ |\arg z|\leq \delta_1\}$, and
$\Gamma^R = \{z:\ |z|=R, \delta_1\leq |\arg z| \leq \pi\}$, and with the closed
path of integration oriented in the negative sense.
\end{lemma}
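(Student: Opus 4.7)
The plan is to derive the identity from the classical Dunford--Taylor holomorphic functional calculus for the bounded operator $A_h$ together with an explicit evaluation of the ``residue at infinity'' of the integrand. First I would locate the spectrum. Since $U_h$ is finite-dimensional, $A_h$ is bounded and $\sigma(A_h)$ is a finite set of eigenvalues; by Remark~\ref{rmk:Ahsectorial} it is contained in the sector $\Sigma_0=\{z:|\arg z|\le\delta_0\}$ with $\delta_0<\delta_1$. Combined with the standing hypothesis $\{|z|\le\epsilon\}\subset\rho(A_h)$ and, after possibly enlarging $R$ beyond $\|A_h\|$, one obtains
\begin{equation*}
   \sigma(A_h)\subset\{z:\epsilon<|z|<R,\ |\arg z|\le\delta_0\}.
\end{equation*}
Geometrically this places $\sigma(A_h)$ in the exterior of the closed curve $C:=\Gamma_\epsilon\cup\Gamma_\epsilon^R\cup\Gamma^R$; conversely, the boundedness hypotheses on $r$ force every pole of $r$ to lie in the interior of $C$.

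Next I would close $C$ at infinity. For $R'>R$ let $C_{R'}=\{|z|=R'\}$ traversed counterclockwise, and let $\Omega_{R'}$ be the bounded domain whose positively oriented boundary consists of $C_{R'}$ together with $C$ traversed in the negative (clockwise) sense. Then $\sigma(A_h)\subset\Omega_{R'}$ and $\Omega_{R'}$ contains no pole of $r$, so $r$ is holomorphic on a neighborhood of $\overline{\Omega_{R'}}$. The standard Dunford--Taylor representation of $r(A_h)$ on the bounded domain $\Omega_{R'}$ then yields
\begin{equation*}
   r(A_h)=\frac{1}{2\pi\mathrm{i}}\int_{C_{R'}}r(z)R(z;A_h)\,dz+\frac{1}{2\pi\mathrm{i}}\int_{\Gamma_\epsilon\cup\Gamma_\epsilon^R\cup\Gamma^R}r(z)R(z;A_h)\,dz,
\end{equation*}
with $C$ oriented in the negative sense exactly as prescribed.

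The last step is to take $R'\to\infty$ in the integral over $C_{R'}$. For $|z|>\|A_h\|$ one has the Neumann expansion $R(z;A_h)=z^{-1}I+z^{-2}A_h+z^{-3}A_h^2+\cdots$, and since $r$ is a rational function bounded at infinity, $r(z)=r(\infty)+O(|z|^{-1})$. Hence $r(z)R(z;A_h)=r(\infty)z^{-1}I+O(|z|^{-2})$ on $C_{R'}$. The leading term integrates to $r(\infty)I$ by the Cauchy integral formula, while the remainder is $O(|z|^{-2})$ on a circle of circumference $2\pi R'$ and therefore contributes $O(1/R')$, which vanishes in the limit. This produces the $r(\infty)I$ term and the stated identity follows.

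The main obstacle, in my view, is purely geometric bookkeeping: verifying carefully that $\sigma(A_h)$ lies in the exterior of the contour $C$ (so that $C$ alone does not enclose it and must be completed by an arc at infinity), and that the orientation of $C$ inherited from $\partial\Omega_{R'}$ coincides with the negative sense declared in the statement. Once the picture is correctly set up, the analytic ingredients---the Dunford--Taylor formula for a bounded operator and the resolvent asymptotics at infinity---are entirely standard.
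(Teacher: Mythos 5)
Your proof is correct, and in fact the paper does not prove this lemma at all --- it is quoted verbatim from Thom\'ee's book (Lemma 9.1 there), whose proof is exactly the argument you have reconstructed: close the given contour by a large circle $C_{R'}$, apply the Riesz--Dunford calculus on the intermediate region (which contains $\sigma(A_h)$ but none of the poles of $r$), and evaluate the contribution of $C_{R'}$ as $R'\to\infty$ via the Neumann series, which produces the term $r(\infty)I$. One cosmetic remark: you need not enlarge $R$, since $\sigma(A_h)$ lies in the closed sector $\{|\arg z|\le\delta_0\}$ minus the $\epsilon$-disk and is therefore exterior to $\Gamma_\epsilon\cup\Gamma_\epsilon^R\cup\Gamma^R$ whether or not it fits inside the disk of radius $R$; the cycle $C_{R'}-C$ still has winding number one about every spectral point and zero about every pole of $r$, so the argument goes through with the contour exactly as stated.
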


\begin{remark}\label{rem:semigroup}
The representation in Lemma \ref{lem:semigroup} holds true for any function $f(z)$
which is analytic in a neighborhood of  $\{z:|\arg z|\leq \delta_1, |z|\ge \epsilon \}$,
including at $z=\infty$.
\end{remark}

\section{Error estimates for semidiscrete Galerkin FEM}\label{sec:semidiscrete}

In this section, we derive $L^2\II$- and $\Hd {\alpha/2}$-norm error estimates for the
semidiscrete Galerkin FEM: find $u_h(t)\in U_h$ such that
\begin{equation}\label{fem}
\begin{split}
 {(  u_{h,t},\fy)}+ A(u_h,\fy)&= {(f, \fy)},
\quad \forall \fy\in U_h,\ T \ge t >0.
\quad u_h(0)=v_h,
\end{split}
\end{equation}
where $v_h\in U_h$ is an approximation to the initial data $v$. We shall discuss
the case of smooth and nonsmooth initial data, i.e. $v\in D(A)$ and $v\in L^2\II$,
separately.

\subsection{Error estimate for nonsmooth initial data}

First we consider nonsmooth initial data, i.e., $v\in L^2\II$. We follow the approach
due to Fujita and Suzuki \cite{FujitaSuzuki:1991}. First, we have the following important
lemma. Here we shall use the constant $\delta_1$ and the contour $\Gamma=\left\{z:z
=\rho e^{\pm \mathrm{i}\delta_1}, \rho\ge 0 \right\}$ defined in the proof of Lemma \ref{lem:sectorial}.
\begin{lemma}\label{lem:werror}
There exists a constant $C>0$ such that for any $ \fy\in \Hd {\al/2}$ and
$z\in \Gamma$
\begin{equation*} 
    |z| \| \fy \|_{L^2\II}^2 + \| \fy \|_{\Hd {\al/2}}^2 \le C\left|z\| \fy \|_{L^2\II}^2 -A(\fy,\fy)\right|.
\end{equation*}
\end{lemma}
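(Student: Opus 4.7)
The plan is to leverage the form sectoriality that already underlies Lemma \ref{lem:sectorial}. For nonzero $\fy\in \Hd{\al/2}$ (the claim is trivial when $\fy=0$), I would set
\[
\zeta:=\frac{A(\fy,\fy)}{\|\fy\|_{L^2\II}^2}\qquad\text{and}\qquad w:=z\|\fy\|_{L^2\II}^2-A(\fy,\fy),
\]
so that $w/\|\fy\|_{L^2\II}^2=z-\zeta$. Combining \eqref{continuous} with \eqref{A-coercive} yields $|A(\fy,\fy)|\le (C_0/c_0)\Re A(\fy,\fy)$ directly on $\Hd{\al/2}$ (no passage through $D(A)$ is required here, since only the form is involved), so $\zeta$ lies in the closed sector $\Si_0=\{\omega:|\arg\omega|\le \de_0\}$ with $\de_0=\arccos(c_0/C_0)<\de_1$. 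The task therefore reduces to bounding $|z-\zeta|$ from below by a multiple of $\max(|z|,|\zeta|)$.

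The key geometric ingredient is that for every $z=|z|e^{\pm i\de_1}$ on $\Ga$ and every $\zeta\in \Si_0$,
\[
|z-\zeta|\;\ge\;\sin(\de_1-\de_0)\,\max(|z|,|\zeta|).
\]
To see the bound by $|z|$, I would drop a perpendicular from $z$ onto the nearer edge $\{re^{\pm i\de_0}:r\ge 0\}$ of $\Si_0$: since $\de_1-\de_0\in(0,\pi/2)$, the foot is at $r=|z|\cos(\de_1-\de_0)>0$, hence on the ray, so $\mathrm{dist}(z,\Si_0)=|z|\sin(\de_1-\de_0)$. The bound by $|\zeta|$ follows either by the same projection argument (when its foot lies on the positive part of the ray through $z$) or, in the degenerate case where the foot falls at negative $r$, from the trivial inequality $|z-\zeta|\ge|\zeta|\ge|\zeta|\sin(\de_1-\de_0)$.

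Multiplying the geometric estimate by $\|\fy\|_{L^2\II}^2$ produces $|w|\ge\sin(\de_1-\de_0)\,|z|\,\|\fy\|_{L^2\II}^2$, and after substituting $|\zeta|\|\fy\|_{L^2\II}^2=|A(\fy,\fy)|\ge\Re A(\fy,\fy)\ge c_0\|\fy\|_{\Hd{\al/2}}^2$ via \eqref{A-coercive}, also $|w|\ge c_0\sin(\de_1-\de_0)\,\|\fy\|_{\Hd{\al/2}}^2$. Adding these two inequalities yields the claim with $C=2/(\min(1,c_0)\sin(\de_1-\de_0))$. The only step demanding any care is the geometric distance bound---in particular checking that the perpendicular-projection analysis is uniform in $\zeta\in \Si_0$ and across both rays of $\Ga$; once that is in hand, everything else is a one-line reduction. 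The resulting inequality is precisely the device that will let the subsequent Fujita--Suzuki-style analysis convert pointwise contour bounds on $|w|$ into simultaneous control of $\|\fy\|_{L^2\II}$ and $\|\fy\|_{\Hd{\al/2}}$.
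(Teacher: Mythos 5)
Your proof is correct, and it rests on the same core fact as the paper's: by \eqref{A-coercive} and \eqref{continuous} the quotient $\zeta=A(\fy,\fy)/\|\fy\|_{L^2\II}^2$ lies in the sector $\Sigma_0$ of half-angle $\delta_0=\arccos(c_0/C_0)$, which is strictly inside the opening angle $\delta_1$ of $\Gamma$, and the estimate follows from a distance-to-sector bound. The execution differs in one genuine respect. The paper splits the coercivity constant as $c_0=c'+\gamma$ with $c'=C_0\cos\delta'$ for an intermediate angle $\delta'\in(\delta_0,\delta_1)$; this places $\zeta$ in a sector of half-angle $\delta'$ \emph{shifted to the right} by $\gamma\|\fy\|_{\Hd{\al/2}}^2/\|\fy\|_{L^2\II}^2$, so that a single lower bound on $\mathrm{dist}(z,\Sigma_\fy)$ delivers the terms $|z|\,\|\fy\|_{L^2\II}^2$ and $\|\fy\|_{\Hd{\al/2}}^2$ simultaneously. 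You instead keep the unshifted sector and prove the two separate lower bounds $|z-\zeta|\ge\sin(\delta_1-\delta_0)\,|z|$ and $|z-\zeta|\ge\sin(\delta_1-\delta_0)\,|\zeta|$, recovering the $\Hd{\al/2}$-term only at the end via $|\zeta|\ge\Re\zeta\ge c_0\|\fy\|_{\Hd{\al/2}}^2/\|\fy\|_{L^2\II}^2$ and then adding the two inequalities. Your route dispenses with the auxiliary angle $\delta'$ and the shift $\gamma$, your geometric verification (including the degenerate case where the foot of the perpendicular leaves the ray, so that the nearest point is the vertex) is complete and uniform over both branches of $\Gamma$, and your observation that only the form --- not $D(A)$ --- is needed for the sector containment is accurate. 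Both arguments give the stated inequality with explicit, comparable constants; yours is marginally more elementary, while the paper's shifted-sector device packages the two bounds into one distance estimate.
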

\begin{proof}
We use the notation $\delta_0$ and $\delta_1$
from the proof of Lemma \ref{lem:sectorial}.
Then we choose $\delta'$ such that $\delta'\in(\delta_0,\delta_1)$
and let $c'=C_0\cos \delta'$, 
cf. Fig. \ref{fig:integral_path}(a). By setting $\gamma=c_0-c'>0$, we have
\begin{equation*}
  \Re A(\fy,\fy)-\ga \|\fy\|_{\Hd {\al/2}}^2 \ge c'\|\fy\|_{\Hd {\al/2}}^2 \ge \cos\delta'|A(\fy,\fy)|.
\end{equation*}
By dividing both sides by $\|\fy\|_{L^2\II}^2$, this yields
\begin{equation*}
   |A(\fy,\fy)|/\| \fy \|_{L^2\II}^2 \in \Sigma_{\fy}= \left\{ z:|\arg\left (z-\gamma
   \|\fy\|_{\Hd {\al/2}}^2/\| \fy \|_{L^2\II}^2\right )|  \le \delta'\right\}.
\end{equation*}
Note that for $z\in \Gamma$, there holds, cf. Fig. \ref{fig:integral_path}(a)
\begin{equation*}
   \text{dist}(z,\Sigma_{\fy}) \ge |z| \sin(\delta_1-\delta')+
   \gamma\|\fy\|_{\Hd {\al/2}}^2/\| \fy \|_{L^2\II}^2 \sin\delta'.
\end{equation*}
Consequently, for $z\in \Gamma$ we get
\begin{equation}\label{eqn:proofcontrol}
\begin{split}
  \left|z\| \fy \|_{L^2\II}^2 -A(\fy,\fy)\right|&\ge \|\fy\|_{L^2\II}^2 \text{dist} (z,\Sigma_{\fy}) \\
    & \ge |z| \|\fy\|_{L^2\II}^2 \sin(\delta_1-\delta')+\gamma\|\fy\|_{\Hd {\al/2}}^2\sin\delta'\\
   & \ge \frac1C \left( |z|\| \fy \|_{L^2\II}^2 + \| \fy \|_{\Hd {\al/2}}^2\right),
\end{split}
\end{equation}
and this completes the proof.
\end{proof}

\begin{figure}[h!]
\center
  \begin{tabular}{cc}
  \includegraphics[clip=true,width=5cm]{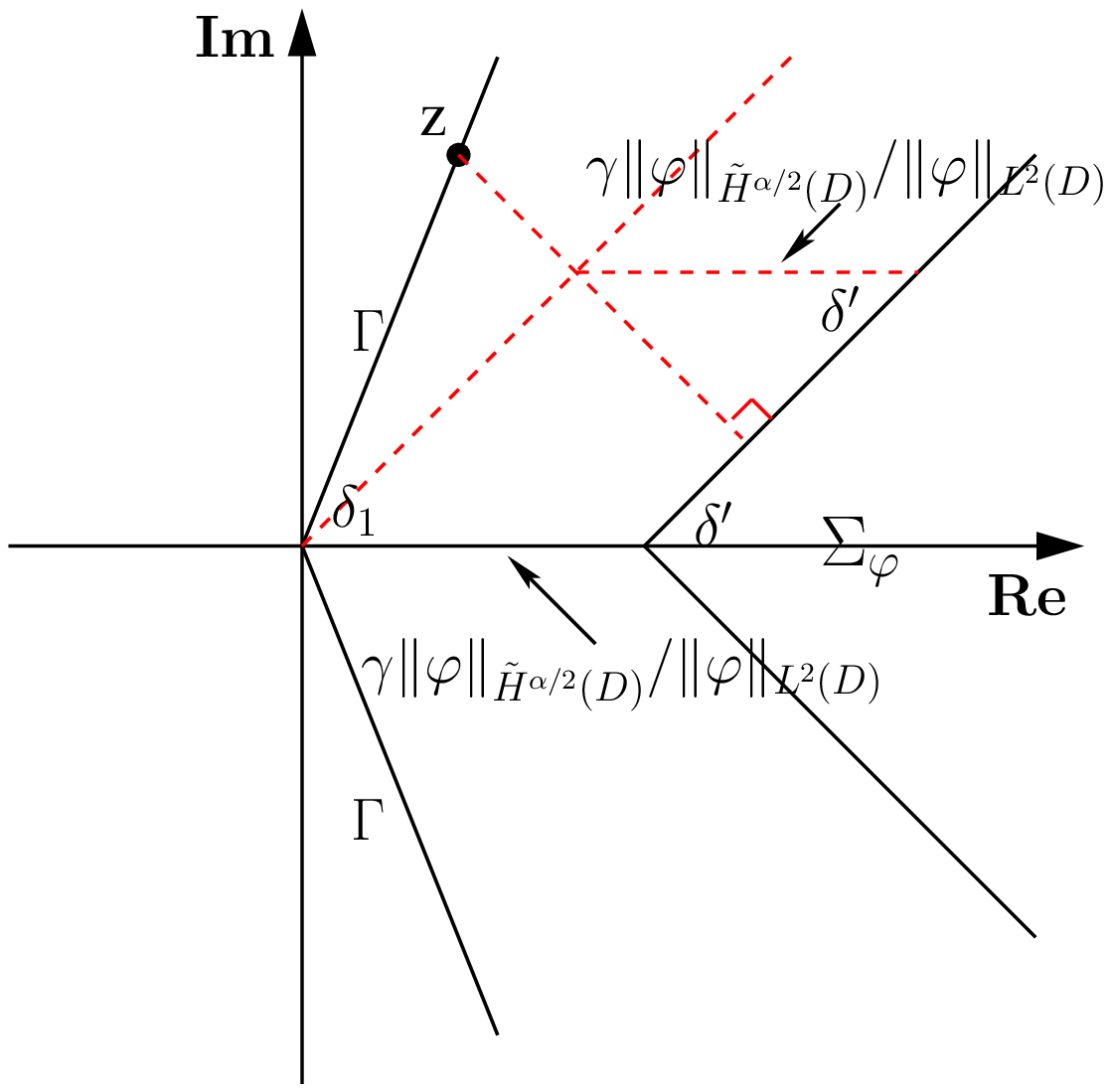}
  &\includegraphics[clip=true,width=5cm]{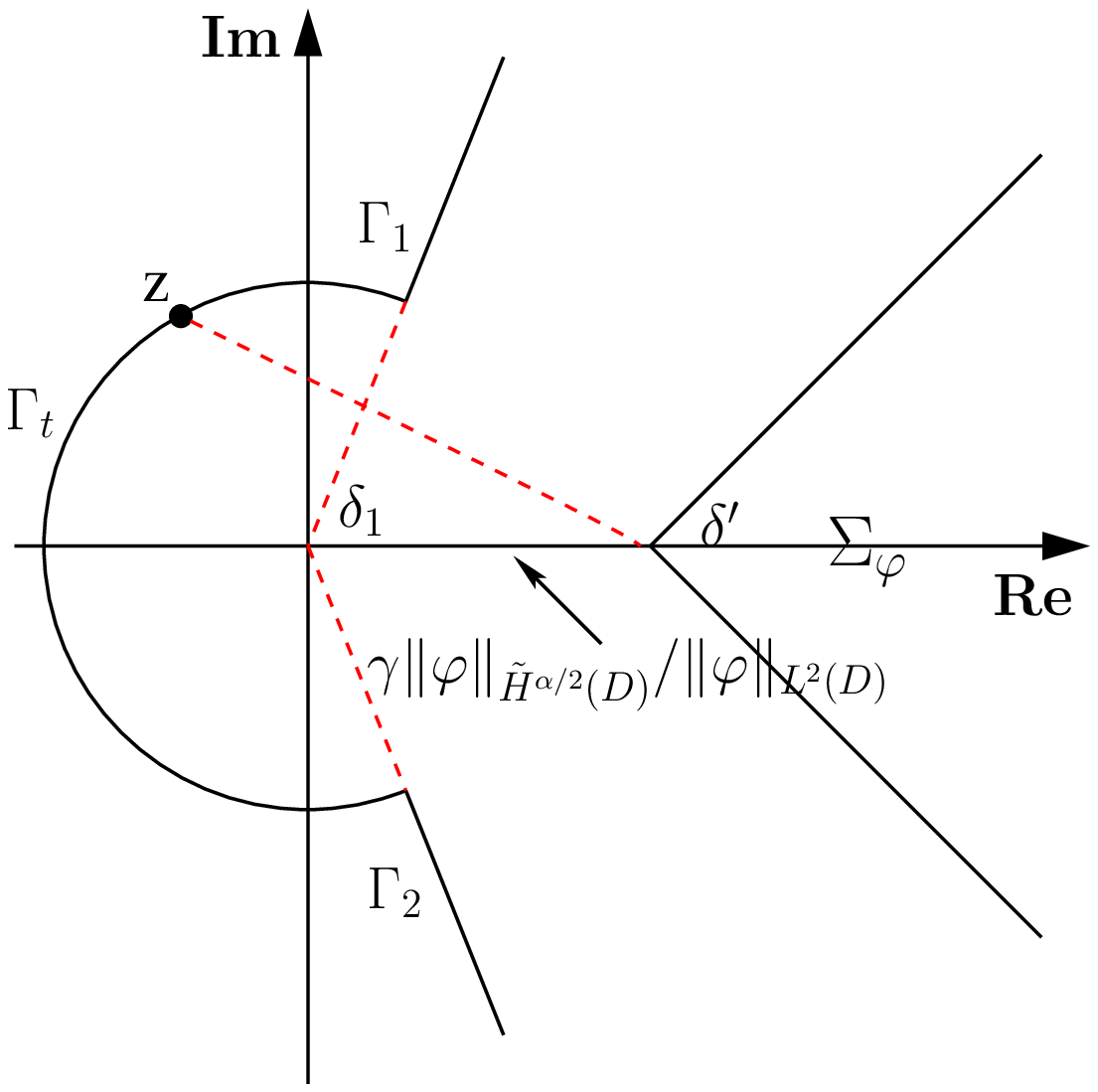}\\
  (a) & (b)\\
  \end{tabular}
  \vspace{-.2cm}
  \caption{Integration path $\Sigma_{\delta_1}$ and $\Sigma_\fy$ for (a) nonsmooth and
  (b) smooth initial data.}
  \label{fig:integral_path}
\end{figure}

The next result gives estimates on the resolvent $R(z;A)v$ and its discrete analogue.
\begin{lemma}\label{lem:wbound}
Let $ v\in L^2\II$,  $z\in \Gamma$,
 $w=R(z;A)v$, and $w_h=R(z;A_h)P_h v$. Then for $\beta\in[0,1/2)$, there holds
\begin{equation}\label{eqn:wboundHa}
    \|  w_h-w \|_{L^2\II} + h^{\al/2-1+\beta}\|  w_h-w \|_{\Hd{\al/2}}
    \le Ch^{\al-2+2\beta}\| v  \|_{L^2\II}.
\end{equation}
\end{lemma}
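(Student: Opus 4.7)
The plan is to exploit Galerkin orthogonality for the resolvent equation, combine it with the coercivity-type estimate of Lemma \ref{lem:werror}, and then upgrade to a sharp $L^2\II$ rate via an Aubin--Nitsche duality argument. I would start by translating the resolvent definitions into variational form: since $(zI-A)w=v$ in the sense of \eqref{eqn:A} and $(zI-A_h)w_h=P_hv$ in $U_h$, the error $e=w-w_h\in\Hd{\al/2}$ satisfies the Galerkin orthogonality $z(e,\chi)-A(e,\chi)=0$ for every $\chi\in U_h$.

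Next I would apply Lemma \ref{lem:werror} with $\fy=e$, then rewrite $z(e,e)-A(e,e)=z(e,w-\chi)-A(e,w-\chi)$ using orthogonality, and bound the right-hand side by Cauchy--Schwarz together with the continuity \eqref{continuous} of $A(\cdot,\cdot)$. Young's inequality absorbs the $e$-terms on the left, yielding a clean best-approximation estimate of the form
\begin{equation*}
|z|^{1/2}\|e\|_{L^2\II}+\|e\|_{\Hd{\al/2}}\le C\inf_{\chi\in U_h}\bigl(|z|^{1/2}\|w-\chi\|_{L^2\II}+\|w-\chi\|_{\Hd{\al/2}}\bigr).
\end{equation*}
To feed in regularity, I would use the sectorial bound $\|w\|_{L^2\II}\le C|z|^{-1}\|v\|_{L^2\II}$ from Lemma \ref{lem:sectorial} to get $\|Aw\|_{L^2\II}=\|zw-v\|_{L^2\II}\le C\|v\|_{L^2\II}$ uniformly in $z\in\Gamma$, so $w\in D(A)$. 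By Remark \ref{rmk::singular}, $w$ splits into a smooth part plus a multiple of $x^{\al-1}$, placing it in $\Hdi0{\al-1+\beta}\cap\Hd{\al/2}$ with norm controlled by $C\|v\|_{L^2\II}$ for any $\beta\in[0,1/2)$. Invoking the approximation estimates for this singular-plus-regular decomposition from \cite{JinLazarovPasciak:2013a} then produces $\|w-\chi\|_{\Hd{\al/2}}\le Ch^{\al/2-1+\beta}\|v\|_{L^2\II}$ (and a companion $L^2\II$ bound), which, after substitution into the display above, delivers the $\Hd{\al/2}$-component of \eqref{eqn:wboundHa}.

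For the sharper $L^2\II$ bound I would duality-double via the adjoint. Given $\phi\in L^2\II$ with $\|\phi\|_{L^2\II}=1$, set $\psi=R(\bar z;A^*)\phi$; the adjoint analogue of Remark \ref{rmk::singular} places $\psi\in\widetilde H_R^{\al-1+\beta}\cap\Hd{\al/2}$ with $\|\psi\|_{\widetilde H_R^{\al-1+\beta}}\le C\|\phi\|_{L^2\II}$, hence the symmetric approximation bound $\|\psi-\chi'\|_{\Hd{\al/2}}\le Ch^{\al/2-1+\beta}\|\phi\|_{L^2\II}$ holds for a suitable $\chi'\in U_h$. The defining identity for $A^*$ gives $(e,\phi)=z(e,\psi)-A(e,\psi)$, and Galerkin orthogonality allows replacing $\psi$ by $\psi-\chi'$ on the right. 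Applying Cauchy--Schwarz with the already obtained $\Hd{\al/2}$-bound on $e$ yields $|(e,\phi)|\le Ch^{\al-2+2\beta}\|v\|_{L^2\II}$, and taking the supremum over $\phi$ completes the proof.

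The main obstacle I expect is the uniform-in-$z$ bookkeeping: on $\Gamma$ the modulus $|z|$ may be small, so a naive separation of the energy estimate into independent $L^2\II$ and $\Hd{\al/2}$ bounds is inadequate, which is precisely why the duality step is essential rather than cosmetic. A secondary technicality is correctly invoking the interpolation/approximation results of \cite{JinLazarovPasciak:2013a} for both $w$ and the adjoint solution $\psi$: the $x^{\al-1}$ (respectively $(1-x)^{\al-1}$) boundary singularity caps the admissible Sobolev index at $\al-1+\beta$ with $\beta<1/2$, and this cap is what ties the exponent $h^{\al-2+2\beta}$ directly to $\beta$.
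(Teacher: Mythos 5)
Your argument follows the paper's proof almost line for line up through the $\Hd{\al/2}$ estimate: Galerkin orthogonality for the resolvent equation, Lemma \ref{lem:werror} applied first to $e$ and then to $w$, the bound $\|Aw\|_{L^2\II}\le \|v\|_{L^2\II}+|z|\|w\|_{L^2\II}\le C\|v\|_{L^2\II}$ giving $\|w\|_{H^{\al-1+\beta}\II}\le C\|v\|_{L^2\II}$, and the interpolant of $w$ as the comparison function. The one place you diverge is the duality step, and there your write-up has a concrete hole. You bound $(e,\phi)=z(e,\psi-\chi')-A(e,\psi-\chi')$ but only account for the second term (``Cauchy--Schwarz with the already obtained $\Hd{\al/2}$-bound on $e$''). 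The first term $|z|\,\|e\|_{L^2\II}\,\|\psi-\chi'\|_{L^2\II}$ does not take care of itself: with the energy bound $\|e\|_{L^2\II}\le C|z|^{-1/2}h^{\al/2-1+\beta}\|v\|_{L^2\II}$ and the full-order interpolation error $\|\psi-\chi'\|_{L^2\II}\le Ch^{\al-1+\beta}\|\phi\|_{L^2\II}$ it evaluates to $C|z|^{1/2}h^{3\al/2-2+2\beta}\|v\|_{L^2\II}\|\phi\|_{L^2\II}$, which is \emph{not} uniformly $O(h^{\al-2+2\beta})$ on $\Gamma$, since $|z|$ is unbounded there. (Your remark about the ``main obstacle'' correctly identifies why duality is needed at all, but misses that the danger in this particular term is $|z|$ large, not small.)

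The gap is fillable: trade approximation order for decay in $|z|$ by using $\|\psi-\pi_h\psi\|_{L^2\II}\le Ch^{\al/2}\|\psi\|_{\Hd{\al/2}}\le Ch^{\al/2}|z|^{-1/2}\|\phi\|_{L^2\II}$, which turns the offending term into $Ch^{\al-1+\beta}\|v\|_{L^2\II}\|\phi\|_{L^2\II}\le Ch^{\al-2+2\beta}\|v\|_{L^2\II}\|\phi\|_{L^2\II}$ for $h\le 1$. The paper avoids this bookkeeping altogether by taking $\chi'$ to be not an interpolant but the discrete adjoint resolvent $\psi_h=R(z;A_h^*)P_h\phi$: then $\psi-\psi_h$ satisfies the same $|z|$-weighted energy estimate \eqref{eqn:control3} as $e$ does (with $\|v\|$ replaced by $\|\phi\|$), and both terms of the duality pairing close in one stroke, each contributing exactly $h^{\al-2+2\beta}$. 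With that one correction your proof coincides with the paper's.
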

\begin{proof}
By the definition, $w$ and $w_h$ should respectively satisfy
\begin{equation*}
  \begin{aligned}
    z(w,\fy)-A(w,\fy)&=(v,\fy),\quad \forall \fy \in U,\\
    z(w_h,\fy)-A(w_h,\fy)&=(v,\fy),\quad \forall \fy\in U_h.
  \end{aligned}
\end{equation*}
Upon subtracting these two identities, it gives an orthogonality relation for $e=w-w_h$:
\begin{equation}\label{eqn:worthog}
    z(e,\fy) - A(e, \fy) = 0, \quad \forall \fy\in U_h.
\end{equation}
This and Lemma \ref{lem:werror} imply that for any $\chi\in U_h$
\begin{equation*}
    \begin{split}
        |z| \| e\|_{L^2\II}^2  + \| e \|_{\Hd {\al/2}}^2
        & \le C\left|z\| e \|_{L^2\II}^2 -A(e,e)\right| \\
        &   = C\left|z(e,w-\chi) -A(e,w-\chi)\right|.
     \end{split}
\end{equation*}
By taking $\chi=\pi_h w$, the finite element interpolant of $w$,
and the Cauchy-Schwarz inequality, we obtain
\begin{equation}\label{eqn:control2}
    \begin{aligned}
     |z| \| e \|_{L^2\II}^2 + \| e \|_{\Hd {\al/2}}^2
          & \le C \left(|z| h^{\al/2}\|e\|_{L^2\II}\|w\|_{\Hd {\al/2}}\right.\\
          &\qquad  +\left. h^{\al/2-1+\beta}\|e\|_{\Hd {\al/2}}\| w \|_{H^{\al-1+\beta}\II} \right).
     \end{aligned}
\end{equation}
Appealing again to Lemma \ref{lem:werror} with the choice $\fy=w$, we arrive at
\begin{equation*}
    |z| \|w \|_{L^2\II}^2 + \|w\|_{\Hd {\al/2}}^2 \le C|((zI-A)w,w)|\le C\| v \|_{L^2\II}\| w\|_{L^2\II}.
\end{equation*}
Consequently
\begin{equation}\label{eqn:wbound2}
   \begin{aligned}
    \|w \|_{L^2\II} \le C|z|^{-1}\| v \|_{L^2\II}\quad\mbox{and}\quad
    \|w \|_{\Hd {\al/2}} \le C|z|^{-1/2}\| v \|_{L^2\II}.
  \end{aligned}
\end{equation}
It remains to bound $\|w\|_{H^{\al-1+\beta}\II}$. To this end, we deduce from \eqref{eqn:wbound2} that
\begin{equation*}
\begin{split}
   \| w \|_{H^{\al-1+\beta}\II} & \le  C\| Aw \|_{L^2\II}= C\| (A-zI+zI)R(z;A)v \|_{L^2\II}\\
   &\le C\left(\| v \|_{L^2\II}+|z|\|w\|_{L^2\II}\right)\le C\| v\|_{L^2\II}.
\end{split}
\end{equation*}
It follows from this and \eqref{eqn:control2} that
\begin{equation*}
    |z| \| e\|_{L^2\II}^2 + \| e\|_{\Hd {\al/2}}^2
     \le Ch^{\al/2-1+\beta}\| v \|\left(|z|^{1/2}\| e\|_{L^2\II} + \|e\|_{\Hd {\al/2}} \right),
\end{equation*}
i.e.,
\begin{equation}\label{eqn:control3}
     |z| \|e\|_{L^2\II}^2 + \| e\|_{\Hd {\al/2}}^2\le Ch^{\al-2+2\beta}\| v \|_{L^2\II}^2.
\end{equation}
from which follows directly the $\Hd{\alpha/2}$-norm of the error $e$. Next we deduce the $L^2\II$-norm
of the error $e$ by a duality argument: given $\fy \in L^2\II$, we define $\psi$ and $\psi_h$ respectively by
\begin{equation*}
   \psi=R(z;A^*)\fy \quad\mbox{and}\quad \psi_h=R(z;A_h^*)P_h\fy.
\end{equation*}
Then by duality
\begin{equation*}
\|e \|_{L^2\II} \le \sup_{\fy \in L^2\II}\frac{|(e,\fy)|}{\|\fy\|_{L^2\II}}
=\sup_{\fy \in L^2\II}\frac{|z(e,\psi)-A(e,\psi)|}{\|\fy\|_{L^2\II}}.
\end{equation*}
Meanwhile it follows from \eqref{eqn:worthog} and \eqref{eqn:control3} that
\begin{equation*}
    \begin{split}
      |z(e,\psi)-A(e,\psi)|
        & = |z(e,\psi-\psi_h)-A(e,\psi-\psi_h)|\\
        & \le |z|\|e\|_{L^2\II}\| \psi-\psi_h \|_{L^2\II}+ C\|e\|_{\Hd{\al/2}}\| \psi-\psi_h \|_{\Hd{\al/2}}\\
        & \le Ch^{\al-2+2\beta} \| v \|_{L^2\II}\| \fy \|_{L^2\II}.
    \end{split}
\end{equation*}
This completes proof of the lemma.
\end{proof}

Now we can state our first error estimate.
\begin{theorem}\label{thm:seminonsmooth}
Let $u$ and $u_h$ be solutions of problem \eqref{variational} and \eqref{fem} with $v\in L^2\II$
and $v_h=P_h v$, respectively. Then for $t>0$, there holds for any $\beta\in[0,1/2)$:
\begin{equation*}
  \| u(t)-u_h(t) \|_{L^2\II} + h^{\al/2-1+\beta}\| u(t)-u_h(t) \|_{\Hd{\al/2}}
  \le C h^{\al-2+2\beta} t^{-1} \| v \|_{L^2\II}.
\end{equation*}
\end{theorem}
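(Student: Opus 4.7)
The plan is to follow the Fujita--Suzuki strategy: represent the difference $u(t)-u_h(t) = E(t)v - E_h(t)P_hv$ via the Dunford--Taylor contour integral, reduce the semigroup error to the resolvent error bounded by Lemma~\ref{lem:wbound}, and harvest the $t^{-1}$ factor from the exponential decay of $e^{-zt}$ along the contour. The key enabler is Remark~\ref{rmk:Ahsectorial}, which says that $A_h$ is sectorial on $U_h$ with the \emph{same} sector constants as $A$, so that a single contour $\Gamma=\{\rho e^{\pm \mathrm{i}\delta_1}:\rho\ge 0\}$ lies in $\rho(A)\cap\rho(A_h)$ and the resolvent bounds along $\Gamma$ are uniform.

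First, since $v_h = P_h v$, I would write
\begin{equation*}
   u(t) - u_h(t) = \frac{1}{2\pi\mathrm{i}}\int_{\Gamma} e^{-zt}\bigl(R(z;A)v - R(z;A_h)P_h v\bigr)\, dz,
\end{equation*}
which is valid by the same representation used in the proof of Lemma~\ref{lem:smoothing1} (now applied to both the continuous and the discrete semigroups). Setting $w = R(z;A)v$ and $w_h = R(z;A_h)P_h v$, Lemma~\ref{lem:wbound} supplies the pointwise bound
\begin{equation*}
   \| w - w_h \|_{L^2\II} + h^{\al/2-1+\beta}\| w-w_h \|_{\Hd{\al/2}} \le C h^{\al-2+2\beta}\|v\|_{L^2\II},
\end{equation*}
with a constant $C$ independent of $z\in\Gamma$. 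Moving the norm inside the integral and using the triangle inequality then yields
\begin{equation*}
   \|u(t)-u_h(t)\|_{L^2\II} \le C h^{\al-2+2\beta}\|v\|_{L^2\II}\int_{\Gamma} |e^{-zt}|\,|dz|,
\end{equation*}
and an analogous inequality with prefactor $h^{\al/2-1+\beta}\|v\|_{L^2\II}$ for the $\Hd{\al/2}$-norm.

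The second step is to estimate the contour integral. Parametrising $\Gamma$ by $z = \rho e^{\pm\mathrm{i}\delta_1}$ with $\rho\ge 0$ and recalling that $\delta_1\in(\delta_0,\pi/2)$, so that $\Re z = \rho\cos\delta_1 > 0$ on $\Gamma$, gives
\begin{equation*}
   \int_{\Gamma}|e^{-zt}|\,|dz| \;=\; 2\int_0^\infty e^{-\rho t\cos\delta_1}\, d\rho \;=\; \frac{2}{t\cos\delta_1} \;\le\; C t^{-1}.
\end{equation*}
Substituting this into the previous inequalities and adding them produces the desired estimate. The only genuine difficulty in this route is the uniform-in-$z$ resolvent bound of Lemma~\ref{lem:wbound}, which has already been proved; given that, the present theorem reduces to the mechanical contour-integral bookkeeping above, and no additional smoothness of $v$ is invoked anywhere.
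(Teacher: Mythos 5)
Your proposal is correct and follows essentially the same route as the paper: the error is written as the Dunford--Taylor contour integral of $e^{-zt}(w-w_h)$ over $\Gamma=\{\rho e^{\pm\mathrm{i}\delta_1}:\rho\ge0\}$, the $z$-uniform resolvent estimate of Lemma~\ref{lem:wbound} is applied under the integral sign, and the factor $t^{-1}$ comes from $\int_0^\infty e^{-\rho t\cos\delta_1}\,d\rho\le Ct^{-1}$. No gaps.
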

\begin{proof}
Note the error $e(t):=u(t)-u_h(t)$ can be represented as
\begin{equation*}
e(t)=\frac1{2\pi\mathrm{i}}\int_{\Gamma} e^{-zt}(w-w_h) \,dz,
\end{equation*}
where the contour $\Gamma=\left\{z:z=\rho e^{\pm \mathrm{i}\delta_1}, \rho\ge
0 \right\}$, and $w=R(z;A)v$ and $w_h=R(z;A_h)P_h v$. By Lemma \ref{lem:wbound}, we have
\begin{equation*}
\begin{split}
     \| e(t)\|_{\Hd {\al/2}} &\le C \int_{\Gamma} |e^{-zt}| \|w-w_h\|_{\Hd{\al/2}} \,dz\\
     & \le Ch^{\al/2-1+\beta}\| v \|_{L^2\II}\int_0^{\infty} e^{-\rho t \cos\delta_1}\,d\rho \le Ch^{\al/2-1+\beta}t^{-1}\| v \|_{L^2\II}.
\end{split}
\end{equation*}
A similar argument also yields the $L^2\II$-estimate.
\end{proof}

\subsection{Error estimate for smooth initial data}

Next we turn to the case of smooth initial data, i.e., $v\in D(A)$. In order to obtain a
uniform bound of the error, we employ an alternative integral representation. With $v_h
=R_h v$, then there holds
\begin{equation*}
\begin{split}
u(t)-u_h(t)&=\int_{\Gamma} e^{-zt}(R(z;A)v-R(z;A_h)R_hv )\,dz\\
&=\int_{\Gamma_{\delta_1}^t} e^{-zt}(R(z;A)v-R(z;A_h)R_hv )\,dz,
\end{split}
\end{equation*}
where $\Gamma_{\delta_1}^t=\Gamma_1\cup \Gamma_2 \cup \Gamma_t$,
$\Gamma_1=\left\{z:z=\rho e^{\mathrm{i}\delta_1}, \rho\ge t^{-1} \right\}$,
$\Gamma_2=\left\{z:z=\rho e^{-\mathrm{i}\delta_1}, \rho\ge t^{-1} \right\}$,
and $\Gamma_t=\left\{z:z=t^{-1} e^{\mathrm{i}\theta}, \delta_1 \le |\theta| \le \pi \right\}$,
cf. Fig. \ref{fig:integral_path}(b). Then using the identities
\begin{equation*}
  R(z;A)=AA^{-1}R(z;A)=A(z^{-1}R(z;A)-z^{-1}A^{-1})=z^{-1}R(z;A)A-z^{-1}I
\end{equation*}
and $\int_{\Gamma_{\delta_1}^t} e^{-st}z^{-1} \,dz=0$, the error $u(t)-u_h(t)$ can be represented as
\begin{equation}\label{eqn:smootherrorrep}
u(t)-u_h(t)=\int_{\Gamma_{\delta_1}^t} z^{-1}e^{-zt}(w-w_h) \,dz,
\end{equation}
where $w=R(z;A)Av$ and $w_h=R(z;A_h)A_hR_hv$.

\begin{lemma}\label{lem:werror2}
For any $\fy\in \Hd {\al/2}$ and $z\in \Gamma_{\delta_1}^t$, there holds
\begin{equation*} 
    |z| \| \fy \|_{L^2\II}^2 + \| \fy \|_{\Hd {\al/2}}^2 \le C\left|z\| \fy \|_{L^2\II}^2 -A(\fy,\fy)\right|.
\end{equation*}
\end{lemma}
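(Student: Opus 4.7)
The strategy is to mirror the proof of Lemma \ref{lem:werror} line by line. The only place where the specific integration contour enters is the lower bound on $\text{dist}(z,\Si_\fy)$, where $\Si_\fy$ denotes the sector---with apex $a:=\ga\|\fy\|_{\Hd{\al/2}}^2/\|\fy\|_{L^2\II}^2$ on the positive real axis and half-angle $\delta'\in(\delta_0,\delta_1)$---that contains the numerical range ratio $A(\fy,\fy)/\|\fy\|_{L^2\II}^2$. The algebraic part of the argument---using \eqref{A-coercive} and \eqref{continuous} to exhibit $\Si_\fy$ and then writing $|z\|\fy\|_{L^2\II}^2-A(\fy,\fy)|\ge \|\fy\|_{L^2\II}^2\,\text{dist}(z,\Si_\fy)$---makes no reference to $z$ and transfers verbatim.

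It remains to verify the geometric estimate $\text{dist}(z,\Si_\fy)\ge c(|z|+a)$ on $\Gamma_{\delta_1}^t=\Gamma_1\cup\Gamma_2\cup\Gamma_t$. On the two half-rays $\Gamma_1$ and $\Gamma_2$ one has $|\arg z|=\delta_1$, so the bound in \eqref{eqn:proofcontrol} applies directly. On the arc $\Gamma_t$ I would bound $\text{dist}(z,\Si_\fy)$ from below by the perpendicular distance from $z$ to the line carrying the nearer boundary ray of $\Si_\fy$. Writing $z=|z|e^{\mathrm{i}\theta}$ with $\theta\in[\delta_1,\pi]$ (the case $\theta\in[-\pi,-\delta_1]$ is symmetric), a direct computation identifies this perpendicular distance as $|z|\sin(\theta-\delta')+a\sin\delta'$.

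The only remaining point is to check that this quantity is uniformly comparable to $|z|+a$ over the arc. Since $\delta'\in(0,\delta_1)$ and $\theta\in[\delta_1,\pi]\subset(\delta',\pi+\delta')$, the map $\theta\mapsto\sin(\theta-\delta')$ is continuous and strictly positive on the compact interval $[\delta_1,\pi]$ with minimum $\min\bigl(\sin(\delta_1-\delta'),\sin\delta'\bigr)$ attained at one of the endpoints. Setting $c:=\min\bigl(\sin(\delta_1-\delta'),\sin\delta'\bigr)>0$ then gives $\text{dist}(z,\Si_\fy)\ge c(|z|+a)$ for all $z\in\Gamma_t$, which has the same form as the estimate used in Lemma \ref{lem:werror}. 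Substituting this back yields the asserted inequality with a constant $C$ independent of $t$ and $\fy$. The only real obstacle is this uniform distance bound on the arc, which is mildly less immediate than the ray case because $\arg z$ now varies; but it amounts to no more than checking positivity of $\sin(\theta-\delta')$ on a closed subinterval of $(0,\pi)$.
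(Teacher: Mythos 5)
Your proof is correct and follows essentially the same route as the paper: both reduce to bounding $\mathrm{dist}(z,\Sigma_\fy)$ on the arc $\Gamma_t$ (the rays $\Gamma_1\cup\Gamma_2$ being covered by Lemma \ref{lem:werror}), the paper by observing from the figure that the distance is minimized at the arc endpoint $z_t=t^{-1}e^{\mathrm{i}\delta_1}$, and you by computing the perpendicular distance $|z|\sin(\theta-\delta')+a\sin\delta'$ directly and checking its uniform positivity over $\theta\in[\delta_1,\pi]$. Your version simply makes explicit the geometric step the paper declares obvious, and both yield the same constant structure independent of $t$ and $\fy$.
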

\begin{proof}
Note that $\Gamma_1 \cup \Gamma_2 \subset \Gamma$, thus it suffices to consider $\Gamma_t$.
Set $z_t= t^{-1}e^{\mathrm{i}\delta_1}$, then it is obvious that for $z\in \Gamma_t$ and
$\fy\in \Hd {\al/2}$ we have $\text{dist}(z,\Sigma_{\fy}) \ge \text{dist}(z_t,\Sigma_{\fy})$,
cf. Fig. \ref{fig:integral_path}(b). Thus the argument in proving \eqref{eqn:proofcontrol}
yields the desired result.
\end{proof}
\begin{remark}\label{rem:wbound2}
For $ v\in L^2\II$, $ z\in \Gamma_t$, let $w=R(z;A)v$ and $w_h=R(z;A_h)P_h v$. Then
the argument in Lemma \ref{lem:wbound} and Lemma \ref{lem:werror2} yield the
estimate \eqref{eqn:wboundHa}.
\end{remark}
\begin{theorem}\label{thm:semismooth}
Let $u$ and $u_h$ be solutions of problem \eqref{variational} and \eqref{fem} with
$v\in D(A)$ and $v_h=R_h v$, respectively. Then for any $\beta\in[0,1/2)$, there holds
\begin{equation*}
   \| u(t)-u_h(t) \|_{L^2\II} + h^{\al/2-1+\beta}\|  u(t)-u_h(t) \|_{\Hd{\al/2}}
   \le C h^{\al-2+2\beta}  \| Av \|_{L^2\II}.
\end{equation*}
\end{theorem}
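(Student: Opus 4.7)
The plan is to exploit the error representation \eqref{eqn:smootherrorrep} together with Remark \ref{rem:wbound2}, after first rewriting the discrete term so that the resolvent estimate in Lemma \ref{lem:wbound} applies directly. The point of using $v_h = R_h v$ in this smooth-data regime is exactly that it makes $A_h v_h$ coincide with the $L^2$-projection of $Av$; this is what renders the integrand in \eqref{eqn:smootherrorrep} amenable to the previous nonsmooth analysis, only now applied to the datum $Av\in L^2\II$.

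Concretely, I would first verify the identity $A_h R_h v = P_h(Av)$. For any $\chi\in U_h$, by the definition of $A_h$, the Ritz projection \eqref{eqn:Ritz}, and the operator $A$ in \eqref{eqn:A},
\begin{equation*}
(A_h R_h v,\chi) = A(R_h v,\chi) = A(v,\chi) = (Av,\chi) = (P_h(Av),\chi).
\end{equation*}
Hence the integrand $w_h = R(z;A_h)A_h R_h v$ in \eqref{eqn:smootherrorrep} is exactly $R(z;A_h)P_h(Av)$, while $w = R(z;A)Av$. Since by Lemma \ref{lem:werror2} the resolvent estimate holds on the whole contour $\Gamma_{\delta_1}^t$, Remark \ref{rem:wbound2} applies on the arc $\Gamma_t$ and Lemma \ref{lem:wbound} applies on $\Gamma_1\cup\Gamma_2\subset\Gamma$, both with the datum $Av\in L^2\II$. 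Consequently, for every $z\in\Gamma_{\delta_1}^t$ and any $\beta\in[0,1/2)$,
\begin{equation*}
\|w - w_h\|_{L^2\II} + h^{\alpha/2 - 1 + \beta}\|w - w_h\|_{\Hd{\alpha/2}} \le C h^{\alpha - 2 + 2\beta}\|Av\|_{L^2\II}.
\end{equation*}

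Next I would plug this bound into \eqref{eqn:smootherrorrep} and estimate the contour integral
\begin{equation*}
I(t) := \int_{\Gamma_{\delta_1}^t} |z|^{-1}\,|e^{-zt}|\,|dz|.
\end{equation*}
On $\Gamma_1\cup\Gamma_2$ the substitution $s=\rho t$ gives $\int_{t^{-1}}^{\infty}\rho^{-1}e^{-\rho t\cos\delta_1}d\rho = \int_1^{\infty}s^{-1}e^{-s\cos\delta_1}ds \le C$, and on the arc $\Gamma_t$ one has $|z|^{-1}=t$, $|e^{-zt}|\le e$, and arc-length $O(t^{-1})$, so the $\Gamma_t$-contribution is also $O(1)$. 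Thus $I(t)\le C$ uniformly in $t>0$, and combining with the pointwise bound above yields
\begin{equation*}
\|u(t)-u_h(t)\|_{L^2\II} + h^{\alpha/2-1+\beta}\|u(t)-u_h(t)\|_{\Hd{\alpha/2}} \le C h^{\alpha-2+2\beta}\|Av\|_{L^2\II},
\end{equation*}
which is the asserted estimate.

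The only genuinely delicate points, which I would flag in writing the proof, are (i) the algebraic identity $A_h R_h v = P_h(Av)$, because it is precisely this identity that lets us reuse the nonsmooth machinery with the stronger datum $Av$ rather than developing a new duality argument; and (ii) confirming that Lemma \ref{lem:wbound} (Remark \ref{rem:wbound2}) is indeed valid on the arc $\Gamma_t$ close to the origin — which Lemma \ref{lem:werror2} was designed to supply, by extending the key sector estimate from $\Gamma$ to the full path $\Gamma_{\delta_1}^t$. Once these are in hand, the contour estimate is elementary and the $t$-independent bound follows.
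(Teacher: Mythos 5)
Your proposal is correct and follows essentially the same route as the paper: the representation \eqref{eqn:smootherrorrep}, the identity $A_hR_h=P_hA$ (which the paper invokes without the short verification you supply) to reduce to Remark \ref{rem:wbound2} with datum $Av$, and the same splitting of the contour integral over $\Gamma_1\cup\Gamma_2$ and $\Gamma_t$ to get a $t$-uniform bound. The only difference is that you spell out the Galerkin-orthogonality computation behind $A_hR_hv=P_h(Av)$ and the applicability of the resolvent bound on each piece of $\Gamma_{\delta_1}^t$, which the paper leaves implicit.
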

\begin{proof}
Let $w=R(z;A)Av$ and $w_h=R(z;A_h)A_hR_hv$. Together with the identity $A_h R_h= P_h A$,
Remark \ref{rem:wbound2} gives
\begin{equation*}
    \|  w_h-w \|_{L^2\II} + h^{\al/2-1+\beta}\|  w_h-w \|_{\Hd{\al/2}}
    \le Ch^{\al-2+2\beta}\| Av  \|_{L^2\II}.
\end{equation*}
Now it follows from this and the representation \eqref{eqn:smootherrorrep} that
\begin{equation*}
\begin{split}
     \| u(t)-u_h(t) \|_{\Hd {\al/2}} &\le C \int_{\Gamma_{\delta_1}^t}
         |z^{-1}| |e^{-zt}| \|w-w_h\|_{\Hd{\al/2}} \,dz\\
     & \le Ch^{\al/2-1+\beta}\| A v \|_{L^2\II} \int_{\Gamma_{\delta_1}^t}
         |z^{-1}| |e^{-zt}|\,dz.\\
\end{split}
\end{equation*}
It suffices to bound the integral term. First we note that
\begin{equation*}
 \int_{\Gamma_1} |z^{-1}| |e^{-zt}|\,dz
    =\int_{t^{-1}}^{\infty} \rho^{-1} e^{-\rho t \cos{\delta_1}} \, d\rho
    \le \int_{\cos{\delta_1}}^{\infty} x^{-1} e^{-x} \, dx \le C,
\end{equation*}
which is also valid for the integral on the curve $\Gamma_2$. Further, we have
\begin{equation*}
\begin{split}
 \int_{\Gamma_t} |z^{-1}| |e^{-zt}|\,dz
    & =\int_{\delta_1}^{2\pi-\delta_1} e^{\cos\theta} \, d\theta \le C.
\end{split}
\end{equation*}
Hence we obtain the $\Hd{\alpha/2}$-estimate. The $L^2\II$-estimate follows analogously.
\end{proof}

\section{Error analysis for fully discrete scheme}\label{sec:fullydiscrete}
Now we turn to error estimates for fully discrete schemes, obtained with
either the backward Euler method or the Crank-Nicolson method in time.

\subsection{Backward Euler method}\label{subsec:backward}
We first consider the backward Euler method for approximating the first-order
time derivative: for $n=1,2,\ldots,N$
\begin{equation*}
    U^n-U^{n-1}+\tau A_h U^n=0,
\end{equation*}
with $U^0=v_h$ which is an approximation of the initial data $v$. Consequently
\begin{equation}\label{eqn:backward}
    U^n=(I+\tau A_h)^{-n} v_h, \quad U^0=v_h, \quad n=1,2,...,N.
\end{equation}
By the standard energy method, the backward Euler method is unconditionally stable, i.e.,
for any $n\in\mathbb{N}$, $ \|(I+\tau A_h)^{-n}\| \le 1.$

To analyze the scheme \eqref{eqn:backward}, we need
the following smoothing property
\cite{FujitaMizutan:1976}. 
\begin{lemma}\label{lem:Ahcontrol}
For $n \in \mathbb{N}$, $n \ge \ga>0$ and $s>0$,
there exists a constant $C>0$, depending on $\ga$ only, such that
\begin{equation}\label{eqn:backeulersmoothing}
    \| A_h^{\ga}(I+sA_h)^{-n} \| \le Cn^{-\ga}s^{-\ga}.
\end{equation}
\end{lemma}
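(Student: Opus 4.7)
The plan is to use the Dunford--Taylor functional calculus for $A_h$, combined with the sectorial estimate on its resolvent from Remark \ref{rmk:Ahsectorial}, to represent $A_h^\gamma(I+sA_h)^{-n}$ as a contour integral whose scalar kernel can be estimated in closed form. Specifically, consider the function $r(z)=z^\gamma(1+sz)^{-n}$. It is analytic in a neighborhood of the closed sector $\{|\arg z|\le \delta_1\}$ since its only singularity, the pole at $z=-1/s$, lies on the negative real axis and is excluded by the choice $\delta_1<\pi/2$. Because $n>\gamma$ in the interesting case, $|r(z)|=O(|z|^{\gamma-n})$ as $|z|\to\infty$, so $r(\infty)=0$ and Remark \ref{rem:semigroup} gives the representation
\begin{equation*}
   A_h^\gamma (I+sA_h)^{-n} \;=\; \frac{1}{2\pi \mathrm{i}}\int_{\Gamma} z^\gamma(1+sz)^{-n}\,R(z;A_h)\,dz,
\end{equation*}
with $\Gamma=\{\rho e^{\pm \mathrm{i}\delta_1}:\rho>0\}$.

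Next, I would apply the sectorial estimate $\|R(z;A_h)\|\le M/|z|$ (valid on $\Gamma$ by Remark \ref{rmk:Ahsectorial}) to reduce the operator norm to a scalar integral:
\begin{equation*}
   \|A_h^\gamma(I+sA_h)^{-n}\| \;\le\; \frac{M}{\pi}\int_0^\infty \rho^{\gamma-1}\,|1+s\rho e^{\mathrm{i}\delta_1}|^{-n}\,d\rho.
\end{equation*}
A direct computation gives $|1+s\rho e^{\mathrm{i}\delta_1}|^2 = 1 + 2s\rho\cos\delta_1 + s^2\rho^2 \ge (1+cs\rho)^2$ with $c=\cos\delta_1>0$. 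Substituting $y=cs\rho$ then turns the integral into a Beta function:
\begin{equation*}
  \int_0^\infty \rho^{\gamma-1}(1+cs\rho)^{-n}d\rho \;=\; (cs)^{-\gamma}\int_0^\infty y^{\gamma-1}(1+y)^{-n}dy \;=\; (cs)^{-\gamma}B(\gamma,n-\gamma).
\end{equation*}

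The key step is the $n$-dependence: since $B(\gamma,n-\gamma)=\Gamma(\gamma)\Gamma(n-\gamma)/\Gamma(n)$ and $\Gamma(n)/\Gamma(n-\gamma)\sim n^\gamma$ by the standard Stirling asymptotic, one has $B(\gamma,n-\gamma)\le C_\gamma n^{-\gamma}$ uniformly in $n\ge \gamma+1$, which yields the desired bound $\|A_h^\gamma(I+sA_h)^{-n}\|\le C n^{-\gamma}s^{-\gamma}$. The main technical obstacle is the boundary case $n=\gamma$ (where the Beta integral diverges), which occurs only when $\gamma$ is a positive integer; this can be handled separately by factoring $A_h^n(I+sA_h)^{-n}=\bigl(A_h(I+sA_h)^{-1}\bigr)^n$ and invoking the identity $A_h(I+sA_h)^{-1}=s^{-1}\bigl(I-(I+sA_h)^{-1}\bigr)$ together with the unconditional stability $\|(I+sA_h)^{-1}\|\le 1$. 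A minor additional point is justifying the Dunford--Taylor representation for the non-rational function $r(z)=z^\gamma(1+sz)^{-n}$, which is covered by Remark \ref{rem:semigroup}.
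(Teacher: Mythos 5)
Your argument is correct, but it follows a genuinely different route from the paper's. The paper first rescales ($sA_h$ is sectorial with the same constants, reducing to $s=1$), writes $F_n(z)=(nz)^{\ga}(1+z)^{-n}$, and evaluates the Dunford--Taylor integral over the three-piece contour $\Ga_{\ep/n}\cup\Ga_{\ep/n}^{nR}\cup\Ga^{nR}$ of Lemma \ref{lem:semigroup}, bounding each piece via the exponential estimates $|1+z|^{-1}\le e^{-c|z|}$ (for $|z|\le R$, $|\arg z|\le\theta$) borrowed from Thom\'ee; the factor $n^{-\ga}$ emerges from $\int (n\rh)^{\ga}e^{-cn\rh}\rh^{-1}d\rh=\int\rho^{\ga-1}e^{-\rho}d\rho$. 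You instead keep $s$ explicit, push the contour all the way to the two rays through the origin, and evaluate the resulting scalar integral exactly as a Beta function, extracting $n^{-\ga}$ from the Gamma-ratio asymptotics $\Gamma(n-\ga)/\Gamma(n)\sim n^{-\ga}$. Your version is more self-contained and gives an explicit constant, at the price of having to treat the borderline case $n=\ga$ (integer $\ga$) separately --- which you do correctly via $A_h(I+sA_h)^{-1}=s^{-1}(I-(I+sA_h)^{-1})$ and stability, noting that this single value of $n$ can be absorbed into $C_\ga$. One point deserves slightly more care than you give it: $z^{\ga}(1+sz)^{-n}$ is \emph{not} analytic at $z=0$ for non-integer $\ga$, so Remark \ref{rem:semigroup} only yields the representation on a contour that avoids the origin by a small arc $\Ga_\ep$; to pass to your two-ray contour you must let $\ep\to0$ and check that the arc contribution vanishes, which it does because $\ga>0$ and $R(z;A_h)$ is uniformly bounded near $z=0$ (the spectrum of $A_h$ stays away from the origin by coercivity). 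This is the same limiting step the paper itself performs in Theorems \ref{thm:fullsmooth:cn} and \ref{thm:fullnonsmooth:cn}, so it is a presentational gap rather than a mathematical one.
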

\begin{proof}
Let $r(z)=\frac1{1+z}$. Then by \cite[Lemma 9.2]{Thomee:2006}, for an arbitrary $R>0$
and $\theta \in (0,\frac{\pi}{2})$, there exist constants $c,\ C>0$ and $\ep \in (0,1)$ such that
\begin{equation}\label{eqn:rationalbound}
\begin{split}
    |r(z)| \le \left\{\begin{array}{ll}
            e^{C|z|}, & \forall |z| \le \ep,\\
             e^{-c|z|},  & \forall |z|\le R,\, |\arg z|\le \theta.
      \end{array}\right.
\end{split}
\end{equation}
Clearly, \eqref{eqn:backeulersmoothing} is equivalent to
$
  \| (nsA_h)^{\ga}r(sA_h)^n \| \le C. 
$
The fact that $A_h$ is sectorial implies that $sA_h$, $s>0$, is also sectorial on $X_h$.
Hence it suffices to show
\begin{equation*}
  \|(nA_h)^{\ga}r(A_h)^n \| \le C,
\end{equation*}
Let $F_n(z)=(nz)^{\ga}r(z)^n$.
Since $r(\infty)=0$, by Lemma \ref{lem:semigroup} and Remark \ref{rem:semigroup}
\begin{equation*}
   F_n(A_h)=\frac{1}{2\pi\mathrm{i}} \int_{\Ga_{\ep/n}\cup{\Ga_{\ep/n}^{nR}}\cup{\Ga^{nR}}}F_n(z)R(z;A_h)\,dz.
\end{equation*}
First, by \eqref{eqn:rationalbound}, we deduce that for $z \in \Ga_{\ep/n}$
\begin{equation*}
  |F_n(z)| \le (n|z|)^{\ga} e^{cn|z|} = \ep ^ {\ga} e^{c\ep} \le C .
\end{equation*}
Thus we have
\begin{equation*}
\bigg|\hspace{-0.6mm}\bigg|\frac{1}{2\pi \mathrm{i}} \int_{\Ga_{\ep/n}}F_n(z)R(z;A_h)\,dz \bigg|\hspace{-0.6mm}\bigg|
\le C \frac{\ep}{n} \sup_{z\in\Ga_{\ep/n}} \| R(z;A_h) \| \le C.
\end{equation*}
Next, we note
\begin{equation*}
\begin{split}
    \bigg|\hspace{-0.6mm}\bigg|\frac{1}{2\pi\mathrm{i}} &\int_{\Ga_{\ep/n}^{nR}}F_n(z)R(z;A_h)\,dz\bigg|\hspace{-0.6mm}\bigg|
    \le C \int_{\ep/n}^{nR} (n\rh)^\ga e^{-cn\rh} \rh^{-1}\,d\rh\\
    &\le C \int_{\ep}^{n^2R} \rho^{\ga-1} e^{-\rho}\,d\rho
    \le C \int_{0}^{\infty} \rho^{\ga-1} e^{-\rho}\,d\rho \leq C.
\end{split}
\end{equation*}
Last, there holds $|1+nz|^{-1} \le C (n|z|)^{-1}$ for $|z|\ge1$.
Hence for $z \in \Ga^{nR}$,
\begin{equation*}
   |F_n(z)| \le C n^{2\ga-n}R^{\ga-n} \le C,\quad \forall n\ge \ga.
\end{equation*}
Thus we have the following bound for the integral on the curve $\Gamma^{nR}$:
\begin{equation*}
    \bigg|\hspace{-0.6mm}\bigg|\frac{1}{2\pi\mathrm{i}} \int_{\Ga^{nR}}F_n(z)R(z;A_h)\,dz  \bigg|\hspace{-0.6mm}\bigg|
             \le C nR \sup_{z\in\Ga^{nR}} \| R(z;A_h) \| \le C.
\end{equation*}
This completes the proof of the lemma.
\end{proof}

Now we derive an error estimate for the fully discrete scheme
\eqref{eqn:backward} in case of smooth initial data, i.e., $v\in D(A)$.
\begin{theorem}\label{thm:fullsmooth:euler}
Let $u$ and $U^n$ be solutions of problem \eqref{variational} and \eqref{eqn:backward} with $v\in D(A)$
and $U^0=R_h v$, respectively. Then for $t_n=n\tau$ and any $\beta\in[0,1/2)$, there holds
\begin{equation*}
\| u(t_n)- U^n \|_{L^2\II} \le C (h^{\al-2+2\beta} + \tau)\| Av \|_{L^2\II}.
\end{equation*}
\end{theorem}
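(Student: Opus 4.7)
The plan is the classical error splitting that isolates the spatial discretization from the temporal discretization. Let $u_h(t) = E_h(t) R_h v$ denote the semidiscrete solution with initial data $v_h = R_h v$, and write
$$
u(t_n) - U^n = \bigl(u(t_n) - u_h(t_n)\bigr) + \bigl(u_h(t_n) - U^n\bigr) =: \rho_n + \sigma_n.
$$
For the spatial part $\rho_n$, Theorem \ref{thm:semismooth} applies with exactly the same choice $v_h = R_h v$ and yields, for any $\beta \in [0,1/2)$,
$$
\|\rho_n\|_{L^2\II} \le C h^{\al-2+2\beta}\|Av\|_{L^2\II},
$$
so no new work is required there.

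For the time-stepping error I write $\sigma_n = (E_h(t_n) - r(\tau A_h)^n) R_h v$ with $r(z) = (1+z)^{-1}$. Since $v \in D(A)$, I would use the standard identity $A_h R_h v = P_h A v$ (which follows at once from \eqref{eqn:Ritz}, \eqref{eqn:A}, and the definition of $A_h$) together with $\|P_h\|_{L^2\II \to L^2\II}\le 1$; this reduces matters to the operator-norm bound
$$
\bigl\|\bigl(E_h(t_n) - r(\tau A_h)^n\bigr) A_h^{-1}\bigr\|_{L^2\II \to L^2\II} \le C\tau.
$$
For this, I would represent both operators as Dunford-Taylor integrals on a common sectorial contour $\widetilde{\Gamma}$ of the type provided by Lemma \ref{lem:semigroup} and Remark \ref{rem:semigroup} (using that $r(\infty) = 0$ allows the far arc to be sent to infinity), obtaining
$$
\bigl(E_h(t_n) - r(\tau A_h)^n\bigr) A_h^{-1} = \frac{1}{2\pi \mathrm{i}} \int_{\widetilde{\Gamma}} z^{-1}\bigl(e^{-zt_n} - r(\tau z)^n\bigr) R(z;A_h)\,dz.
$$
The pointwise estimate to prove on $\widetilde{\Gamma}$ would be
$$
\bigl|e^{-zt_n} - r(\tau z)^n\bigr| \le C\tau|z|\bigl(e^{-cn\tau|z|} + (1+\tau|z|)^{-n}\bigr),
$$
which I would derive from the telescoping identity
$$
r(\tau z)^n - e^{-n\tau z} = \sum_{k=0}^{n-1} r(\tau z)^{n-k-1}\,e^{-k\tau z}\bigl(r(\tau z) - e^{-\tau z}\bigr),
$$
combined with the Taylor bound $|r(s) - e^{-s}|\le C|s|^2$ and the sectorial bound $|r(\tau z)|\le (1+c\tau|z|)^{-1}$ valid for $z$ in a sector strictly inside the right half-plane. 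Together with $\|R(z;A_h)\|\le M/|z|$ from Remark \ref{rmk:Ahsectorial} and the extra factor $|z|^{-1}$ from $A_h^{-1}$, the integrand is of size $\tau$ times an integrable function of $|z|$, giving $\|\sigma_n\|_{L^2\II} \le C\tau\|Av\|_{L^2\II}$ and completing the estimate.

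The main technical obstacle is the bookkeeping in the rational-approximation estimate on $\widetilde{\Gamma}$. One has to split the contour into the regime $\tau|z| \le 1$, where the local Taylor expansion is the only useful tool and one must balance the factor $Cn(\tau|z|)^2$ against the exponential decay of $e^{-cn\tau|z|}$, and the regime $\tau|z| > 1$, where $|r(\tau z)^n|$ and $|e^{-n\tau z}|$ are bounded separately using $|r(\tau z)|\le C/(\tau|z|)$. A change of variables $\rho = t_n|z|$ on each ray of $\widetilde{\Gamma}$ then makes the resulting integrals independent of $n$ and $\tau$ and cleanly exposes the overall factor $\tau = t_n/n$.
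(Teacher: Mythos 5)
Your decomposition and the treatment of the spatial part are exactly the paper's: split $u(t_n)-U^n$ into the semidiscrete error (handled by Theorem \ref{thm:semismooth} with $v_h=R_hv$) and the temporal error, then use $A_hR_h=P_hA$ and the $L^2\II$-stability of $P_h$ to trade $\|A_hR_hv\|_{L^2\II}$ for $\|Av\|_{L^2\II}$. Where you diverge is in bounding $\vartheta^n=E_h(t_n)v_h-(I+\tau A_h)^{-n}v_h$. The paper does not use a Dunford--Taylor representation here at all: it writes $\vartheta^n=-\int_0^{\tau}\frac{d}{ds}\bigl(E_h(n(\tau-s))(I+sA_h)^{-n}v_h\bigr)\,ds=-\int_0^{\tau}nsA_h^2E_h(n(\tau-s))(I+sA_h)^{-n-1}v_h\,ds$ and then distributes the powers of $A_h$ between the smoothing estimates for the continuous semigroup (Lemma \ref{lem:Ahprop4}) and for the backward Euler propagator (Lemma \ref{lem:Ahcontrol}), which yields $\|\vartheta^n\|_{L^2\II}\le C\tau\|A_hR_hv\|_{L^2\II}$ with no contour integration. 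Your route --- a common contour, the telescoping identity for $e^{-n\lambda}-r(\lambda)^n$, and a sectorial resolvent bound --- is precisely the machinery the paper deploys for the Crank--Nicolson scheme (Lemma \ref{lem:cnbound} and Theorem \ref{thm:fullsmooth:cn}), so it is entirely consistent with the paper's toolbox; it generalizes more readily to other rational approximations, at the cost of the contour bookkeeping, while the paper's interpolation-in-$s$ trick is shorter but specific to the backward Euler family $(I+sA_h)^{-n}$.

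One caveat on your version: the displayed pointwise bound $|e^{-zt_n}-r(\tau z)^n|\le C\tau|z|\,e^{-cn\tau|z|}$ in the regime $\tau|z|\le 1$ is too lossy. Combined with the weight $|z|^{-1}\|R(z;A_h)\|\le C|z|^{-2}$ it produces $\tau\int_0\rho^{-1}\,d\rho$, which diverges logarithmically at the origin. You must keep the sharper form $Cn(\tau|z|)^2e^{-cn\tau|z|}$ that the telescoping argument actually gives (and which you mention in your last paragraph); then the integrand near the origin is $Cn\tau^2e^{-cn\tau\rho}$, whose integral is $C\tau$ as required. With that correction the argument closes.
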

\begin{proof}
Note that the error $e^n=u(t_n)-U^n$ can be split into
\begin{equation*}
   e^n=  (u(t_n) - u_h(t_n)) + (u_h(t_n) - U^n) := \rlh^n + \vtht^n,
\end{equation*}
where $u_h$ denotes the semidiscrete Galerkin solution with $v_h=R_h v$.
By Theorem \ref{thm:semismooth}, the term $\rlh^n$ satisfies the following estimate
\begin{equation*}
   \|\rlh^n \|_{L^2\II} \le C h^{\al-2+2\beta}\| Av\|_{L^2\II}.
\end{equation*}
Next we bound the term $\vtht^n$. Note that for $n\ge1$,
\begin{equation}\label{eqn:euler-err}
\begin{split}
\vtht^n & = E_h(n\tau)-(I+\tau A_h)^{-n} v_h \\
 & = -\int_0^{\tau} \frac{d}{ds}\left(E_h(n(\tau-s))(I+sA_h)^{-n}v_h\right)\, ds\\
 & = -\int_0^{\tau}nsA_h^2 E_h(n(\tau-s))(I+sA_h)^{-n-1}v_h\, ds.
\end{split}
\end{equation}
Then by Lemmas \ref{lem:Ahprop4} and \ref{lem:Ahcontrol} we have
\begin{equation*}
\begin{split}
  \|\vtht^n\|_{L^2\II} & \le C n^{1/2} \int_0^{\tau} s(\tau-s)^{-1/2}\|A_h^{3/2} (I+sA_h)^{-n-1} R_h v \|_{L^2\II} \, ds \\
   & \le C n^{1/2} \int_0^{\tau} s^{1/2} (n+1)^{-1/2} (\tau-s)^{-1/2} \|A_h R_h v \|_{L^2\II} \, ds \\
   & \le C\tau\| A_h R_h v \|_{L^2\II}.
\end{split}
\end{equation*}
The desired result follows from the identity $A_hR_h=P_hA$ and the $L^2\II$-stability of the projection $P_h$.
\end{proof}

Next we give an error estimate for $L^2\II$ initial data $v$.
\begin{theorem}\label{thm:fullnonsmooth:euler}
Let $u$ and $U^n$ be solutions of problem \eqref{variational} and \eqref{eqn:backward} with $v\in L^2\II$
and $U^0=P_h v$, respectively. Then for $t_n=n\tau$ and any $\beta\in[0,1/2)$, there holds
\begin{equation*}
  \| u(t_n)- U^n \|_{L^2\II} \le C (h^{\al-2+2\beta} + \tau)t_n^{-1}\| v \|_{L^2\II}.
\end{equation*}
\end{theorem}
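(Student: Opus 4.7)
The plan is to decompose the error in the standard way into a spatial semidiscrete error and a time-stepping error:
\[
  u(t_n)-U^n = \bigl(u(t_n)-u_h(t_n)\bigr) + \bigl(u_h(t_n)-U^n\bigr) =: \rlh^n + \vtht^n,
\]
where $u_h$ is the semidiscrete Galerkin solution with $u_h(0)=P_hv$. The $\rlh^n$ piece is then controlled immediately by Theorem \ref{thm:seminonsmooth}, yielding $\|\rlh^n\|_{L^2\II}\le Ch^{\alpha-2+2\beta}t_n^{-1}\|v\|_{L^2\II}$. Everything nontrivial happens for $\vtht^n$.

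For $\vtht^n$, I would mimic the integration-by-parts identity \eqref{eqn:euler-err}, but with $P_h v$ in place of $R_h v$, to write
\[
  \vtht^n = -\int_0^\tau ns\, A_h^2 E_h(n(\tau-s))(I+sA_h)^{-n-1}P_h v\, ds.
\]
Since $v$ is merely in $L^2\II$, I cannot pull an $A_h$ off onto the initial data as in the smooth case (where one used $A_hR_h v=P_hAv$). Instead I must extract the full smoothing power $t_n^{-1}$ out of the operator norm by splitting $A_h^2=A_h^a\cdot A_h^{2-a}$ asymmetrically for some $a\in(0,1)$ (say $a=\tfrac12$), applying Lemma \ref{lem:Ahprop4} to the semigroup factor and Lemma \ref{lem:Ahcontrol} to the rational factor:
\[
  \|A_h^a E_h(n(\tau-s))\cdot A_h^{2-a}(I+sA_h)^{-n-1}P_hv\|_{L^2\II}
  \le C\bigl(n(\tau-s)\bigr)^{-a}(n+1)^{-(2-a)}s^{-(2-a)}\|v\|_{L^2\II},
\]
where I use $L^2\II$-stability of $P_h$.

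Inserting this bound into the $s$-integral produces the combined power $n\cdot n^{-a}(n+1)^{-(2-a)}\le Cn^{-1}$ and the Beta-type integral
\[
  \int_0^\tau s\cdot(\tau-s)^{-a}s^{-(2-a)}\,ds
  \;=\;\int_0^\tau (\tau-s)^{-a}s^{a-1}\,ds \;=\; B(a,1-a),
\]
which is finite and independent of $\tau$. Hence $\|\vtht^n\|_{L^2\II}\le Cn^{-1}\|v\|_{L^2\II}=C\tau t_n^{-1}\|v\|_{L^2\II}$, and combining with the $\rlh^n$ bound completes the proof.

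The main subtlety is the choice of the exponent $a$: the symmetric split $a=1$ fails (it produces a logarithmically divergent integral at $s=\tau$), so one must take $a\in(0,1)$ strict to keep both endpoints integrable. A minor technical point is that Lemma \ref{lem:Ahcontrol} requires $n\ge 2-a$, so for very small $n$ one either tunes $a$ close to $1$ (so $n\ge 1$ is enough) or handles $n=1$ directly by the uniform stability bound $\|E_h(\tau)-(I+\tau A_h)^{-1}\|\le 2$, in which case $\tau t_1^{-1}=1$ makes the target estimate trivial. Once this is sorted out, the rest is bookkeeping.
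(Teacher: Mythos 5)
Your proposal is correct and follows essentially the same route as the paper: the same splitting $e^n=\rlh^n+\vtht^n$ with Theorem \ref{thm:seminonsmooth} for $\rlh^n$, the same identity \eqref{eqn:euler-err}, and the same asymmetric factorization $A_h^2=A_h^{3/2}\cdot A_h^{1/2}$ (your $a=\tfrac12$) combined with Lemmas \ref{lem:Ahprop4} and \ref{lem:Ahcontrol} and the Beta integral $\int_0^\tau s^{-1/2}(\tau-s)^{-1/2}\,ds$. Your worry about the hypothesis $n\ge\gamma$ in Lemma \ref{lem:Ahcontrol} is moot since the rational factor carries exponent $n+1\ge 2>3/2$ for all $n\ge1$.
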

\begin{proof}
Like before, we split the error $e^n=u(t_n)-U^n$ into
\begin{equation}\label{eqn:fullsplit}
   e^n=  (u(t_n) - u_h(t_n)) + (u_h(t_n) - U^n) := \rlh^n + \vtht^n,
\end{equation}
where $u_h$ denotes the semidiscrete Galerkin solution with $v_h=P_h v$.
In view of Theorem \ref{thm:seminonsmooth}, 
it remains to estimate the term $\vtht^n$. By identity \eqref{eqn:euler-err} and Lemmas \ref{lem:Ahcontrol} and
\ref{lem:Ahprop4}, we have for $n\ge1$
\begin{equation*}
\begin{split}
\|\vtht^n\|_{L^2\II} & \le C n \int_0^{\tau} s \|A_h^{3/2} (I+sA_h)^{-n-1} A_h^{1/2} E_h(n(\tau-s)) P_h v \|_{L^2\II} \, ds \\
& \le C n \int_0^{\tau} s s^{-3/2} (n+1)^{-3/2} \|A_h^{1/2} E_h(n(\tau-s)) P_h v \|_{L^2\II} \, ds \\
& \le C n^{-1/2} \int_0^{\tau} s^{-1/2} n^{-1/2}(\tau-s)^{-1/2} \| P_h v \|_{L^2\II} \, ds  \le C\tau t_n^{-1}\| v \|_{L^2\II}.\\
\end{split}
\end{equation*}
This completes the proof of the theorem.
\end{proof}

\subsection{Crank-Nicolson method}\label{ssec:fullcn}
Now we turn to the fully discrete scheme based on the Crank-Nicolson method. It reads
\begin{equation*}
    U^n-U^{n-1}+\tau A_h U^{n-1/2}=0, \quad U^0=v_h, \quad n=1,2,...,N,
\end{equation*}
where $U^{n-1/2}=\frac12( U^{n} + U^{n-1 })$. Therefor we have
\begin{equation}\label{eqn:Crank-Nicolson}
    U^n=\left(I+\tfrac{\tau}{2} A_h\right)^{-n}\left(I-\tfrac{\tau}{2} A_h\right)^n v_h,  \quad n=1,2,...,N.
\end{equation}
It can be verified by the energy method that the Crank-Nicolson method is
unconditionally stable, i.e., for any $n\in\mathbb{N}$,
$\|\left(I+\tfrac{\tau}{2} A_h\right)^{-n}\left(I-\tfrac{\tau}{2} A_h\right)^n\| \le 1$.

For the error analysis, we need a result on the rational function
\begin{equation*}
  r_{cn}(z)=\frac{1-\frac{z}{2}}{1+\frac{z}{2}}.
\end{equation*}
\begin{lemma}\label{lem:cnbound}
For any arbitrary $R >0$, there exist $C>0$ and $c>0$ such that
\begin{equation*}
    |e^{-nz} - r_{cn}(z)^n| \le \left\{\begin{array}{ll}
 \displaystyle     Ce^{-\frac{cn}{|z|}},  & \quad |\arg z |\le \delta_1,\, |z| \ge R,\\
     C n |z|^3 e^{-cn|z|},& \quad |\arg z |\le \delta_1,\, |z| \le R,
    \end{array}\right.
\end{equation*}
\end{lemma}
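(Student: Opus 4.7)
The plan is to treat the two regimes separately and rely on the algebraic identity
\[
|r_{cn}(z)|^2 = \frac{|2-z|^2}{|2+z|^2} = 1 - \frac{8|z|\cos(\arg z)}{|2+z|^2},
\]
which immediately shows that $|r_{cn}(z)| \le 1$ whenever $\Re z \ge 0$, and in fact gives quantitative decay that I will exploit in both regimes. I will also use the fact that $r_{cn}$ is the second-order diagonal Pad\'e approximant of $e^{-z}$, so by a direct Taylor expansion at $0$,
\[
|e^{-z} - r_{cn}(z)| \le C|z|^3 \qquad \text{for } |z|\le R.
\]

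For the large-modulus regime $|z|\ge R$, $|\arg z|\le \delta_1$, the identity above yields $|2+z|^2 \le C|z|^2$, hence $|r_{cn}(z)|^2 \le 1 - c/|z|$, and consequently $|r_{cn}(z)^n| \le e^{-cn/|z|}$ after possibly shrinking $c$. On the other hand $|e^{-nz}| \le e^{-n|z|\cos\delta_1}$, and since $|z|\ge R$ forces $n|z|\cos\delta_1 \ge cn/|z|$ once $R$ is chosen (or $c$ adjusted) appropriately, this term also satisfies $|e^{-nz}| \le e^{-cn/|z|}$. A triangle inequality then delivers the first bound.

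For the small-modulus regime $|z|\le R$, $|\arg z|\le \delta_1$, the plan is to use the factorization
\[
e^{-nz} - r_{cn}(z)^n = \bigl(e^{-z}-r_{cn}(z)\bigr)\sum_{k=0}^{n-1} e^{-(n-1-k)z}\,r_{cn}(z)^{k}.
\]
The prefactor is $O(|z|^3)$ by the Pad\'e approximation property. For each term in the sum, the estimate $|e^{-z}| \le e^{-\cos(\delta_1)|z|}$ is immediate, and from the same identity as before, $|r_{cn}(z)|^2 \le 1 - c|z|$ on $\{|z|\le R,\,|\arg z|\le \delta_1\}$, so $|r_{cn}(z)^k| \le e^{-ck|z|}$. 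Thus each summand is bounded by $e^{-c(n-1)|z|}$, the sum contributes at most a factor $n$, and absorbing constants gives $|e^{-nz} - r_{cn}(z)^n| \le Cn|z|^3 e^{-cn|z|}$.

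The main delicacy is not conceptual but bookkeeping: one must pick the constant $c$ small enough to serve simultaneously in the bound on $|r_{cn}(z)^n|$ and in the comparison $|e^{-nz}| \le e^{-cn/|z|}$ on $|z|\ge R$, and similarly small enough to dominate both $|e^{-z}|$ and $|r_{cn}(z)|$ on $|z|\le R$. Everything else is a one-line Taylor estimate, an elementary bound on $|r_{cn}|$, and the standard telescoping identity $a^n - b^n = (a-b)\sum_k a^{n-1-k} b^k$.
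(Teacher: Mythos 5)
Your proposal is correct and follows essentially the same route as the paper's proof: a triangle inequality combining $|e^{-nz}|\le e^{-cn/|z|}$ with a decay bound on $|r_{cn}(z)^n|$ for $|z|\ge R$, and the telescoping identity $a^n-b^n=(a-b)\sum_k a^{n-1-k}b^k$ together with the Pad\'e estimate $|e^{-z}-r_{cn}(z)|\le C|z|^3$ for $|z|\le R$. The only difference is cosmetic: where the paper obtains $|r_{cn}(z)^n|\le e^{-cn/|z|}$ and $|r_{cn}(z)|\le e^{-c|z|}$ by citing the expansion $r_{cn}(z)=-e^{-4w+O(w^2)}$ with $w=1/z$ (via Lemmas 9.2 and 9.4 of Thom\'ee), you derive both from the explicit identity $|r_{cn}(z)|^2=1-8\Re z/|2+z|^2$, which is a perfectly valid and self-contained substitute.
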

\begin{proof}
The proof of general cases can be found in \cite[Lemmas 9.2 and 9.4]{Thomee:2006}. We
briefly sketch the proof here. By setting $w=1/z$, the first inequality follows from
\begin{equation*}
   r_{cn}(z)=\frac{1-\frac{z}{2}}{1+\frac{z}{2}} = - \frac{1-2w}{1+2w}=-r(4w)=-e^{-4w+O(w^{2})}, \quad w \rightarrow 0,
\end{equation*}
and that for $c \le \cos \delta_1$,
$$ 
  \displaystyle  |e^{-z}| = e^{-\Re z} \le e^{-c|z|} \le Ce^{-\frac{c}{|z|}}, \, |\arg z| \le \delta_1, \ |z|\ge R.
$$ 
The first estimate now follows by the triangle inequality. Meanwhile, we observe that
\begin{equation*}
  \begin{aligned}
    & |r_{cn}(z)-e^{-z}|\le C |z|^{3}, \quad |z|\le R,\, |\arg z| \le \delta_1,\\
    & |r_{cn}(z)|\leq e^{-c|z|},\quad |\arg z|\le \delta_1,\, |z|\le R.
  \end{aligned}
\end{equation*}
Consequently for $z$ under consideration
\begin{equation*}
 |e^{-nz} - r_{cn}(z)^n| = |(e^{-z} - r_{cn}(z))\sum_{j=0}^{n-1}r_{cn}(z)^j e^{-(n-1-j)z}| \le C |z|^{3}ne^{-cn|z|}.
\end{equation*}
This completes the proof of the lemma.
\end{proof}

Now we can state an $L^2\II$-norm estimate for \eqref{eqn:Crank-Nicolson}
in case of smooth initial data.
\begin{theorem}\label{thm:fullsmooth:cn}
Let $u$ and $U^n$ be solutions of problem \eqref{variational} and \eqref{eqn:Crank-Nicolson} with $v\in D(A)$
and $U^0=R_h v$, respectively. Then for $t_n = n\tau$ and any $\beta\in[0,1/2)$, there holds
\begin{equation*}
\| u(t_n)- U^n \|_{L^2\II} \le C (h^{\al-2+2\beta} + \tau^2 t_n^{-1})\| Av \|_{L^2\II}.
\end{equation*}
\end{theorem}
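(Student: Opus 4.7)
Following the strategy used for Theorem \ref{thm:fullsmooth:euler}, I would split the error as
\begin{equation*}
    e^n := u(t_n) - U^n = (u(t_n) - u_h(t_n)) + (u_h(t_n) - U^n) =: \rlh^n + \vtht^n,
\end{equation*}
where $u_h(t)$ is the semidiscrete Galerkin solution of \eqref{fem} with $v_h = R_h v$. The spatial term $\rlh^n$ is controlled directly by Theorem \ref{thm:semismooth}, giving $\|\rlh^n\|_{L^2\II} \le C h^{\al-2+2\beta}\|Av\|_{L^2\II}$, which already accounts for the $h^{\al-2+2\beta}$ contribution in the claimed bound. The whole work is thus to show
\begin{equation*}
    \|\vtht^n\|_{L^2\II} \le C \tau^2 t_n^{-1} \|Av\|_{L^2\II}.
\end{equation*}

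Since $u_h(t_n) = E_h(t_n) R_h v$ and $U^n = r_{cn}(\tau A_h)^n R_h v$, we have $\vtht^n = F_n(\tau A_h) R_h v$ with $F_n(z) := e^{-nz} - r_{cn}(z)^n$. The key algebraic step is to exploit the smoothness of $v$ by writing $\vtht^n = F_n(\tau A_h) A_h^{-1} (A_h R_h v) = F_n(\tau A_h) A_h^{-1} P_h A v$, using the identity $A_h R_h = P_h A$ (which was already used in the proof of Theorem \ref{thm:fullsmooth:euler}). By $L^2\II$-stability of $P_h$, the problem reduces to bounding the operator norm $\|F_n(\tau A_h) A_h^{-1}\| \le C\tau^2 t_n^{-1}$.

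Since $F_n$ vanishes at $z=0$ to third order (because Crank--Nicolson is second-order accurate: $r_{cn}(z)-e^{-z}=O(z^3)$ near $0$) and tends to the bounded value $(-1)^{n+1}$ as $|z|\to\infty$ in the sector $|\arg z|\le\delta_1$, the function $G_n(z) := F_n(z)/z$ is analytic in a neighbourhood of the sector and decays at infinity. Therefore, by Lemma \ref{lem:semigroup} together with Remark \ref{rem:semigroup} applied to $\tau A_h$ (which is sectorial by Remark \ref{rmk:Ahsectorial}), followed by the rescaling $z = \tau w$, we can represent
\begin{equation*}
    F_n(\tau A_h) A_h^{-1} = \tau G_n(\tau A_h) = \frac{\tau}{2\pi \mathrm{i}} \int_{\Gamma'} G_n(\tau w) R(w;A_h)\, dw,
\end{equation*}
where $\Gamma'$ is a contour of the form $\Gamma_{\epsilon/\tau}\cup \Gamma_{\epsilon/\tau}^{R/\tau}\cup \Gamma^{R/\tau}$ around the spectrum of $A_h$.

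The main obstacle is the contour estimate, which I would handle by splitting $\Gamma'$ into the two regimes $|\tau w|\le R$ and $|\tau w|\ge R$ governed by the two bounds of Lemma \ref{lem:cnbound}. Combined with the sectorial bound $\|R(w;A_h)\|\le C/|w|$, on the inner piece one finds
\begin{equation*}
    \tau \int_0^{R/\tau} \frac{|F_n(\tau w)|}{|\tau w|}\cdot\frac{1}{|w|}\,d|w|
      \le C\tau \int_0^{R/\tau} n \tau^{2}\rho\, e^{-cn\tau \rho}\, d\rho \le \frac{C\tau}{n},
\end{equation*}
by the change of variable $u=cn\tau\rho$; on the outer piece a similar change of variable $u=cn/(\tau\rho)$ using $|F_n(\tau w)|\le C e^{-cn/|\tau w|}$ yields the same bound $C\tau/n$. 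Summing these contributions gives $\|F_n(\tau A_h) A_h^{-1}\| \le C\tau/n = C\tau^2 t_n^{-1}$, whence $\|\vtht^n\|_{L^2\II} \le C\tau^2 t_n^{-1}\|Av\|_{L^2\II}$, as required. Combining with the bound on $\rlh^n$ completes the proof.
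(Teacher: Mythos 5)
Your proposal is correct and follows essentially the same route as the paper: the same splitting $e^n=\rlh^n+\vtht^n$ with $\rlh^n$ handled by Theorem \ref{thm:semismooth}, the same reduction via $A_hR_h=P_hA$ to the operator bound $\|F_n(\tau A_h)A_h^{-1}\|\le C\tau^2t_n^{-1}$, and the same contour estimate using the two regimes of Lemma \ref{lem:cnbound}. The only cosmetic differences are that the paper rescales to a unit time step (proving $\|A_h^{-1}(E_h(n)-r_{cn}(A_h)^n)\|\le Cn^{-1}$) and, because $e^{-nz}$ is not analytic at $z=\infty$, it represents the semigroup and the rational part by separate contour integrals before combining them, rather than invoking Remark \ref{rem:semigroup} for $G_n$ directly.
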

\begin{proof}
Like before, we split the error $e^n$ into
\begin{equation*}
   e^n=  (u(t_n) - u_h(t_n)) + (u_h(t_n) - U^n) := \rlh^n + \vtht^n,
\end{equation*}
where $u_h$ denotes the semidiscrete Galerkin solution with $v_h=R_h v$. Then
by Theorem \ref{thm:semismooth}, the term $\rlh^n$ satisfies the following estimate
\begin{equation*}
   \|\rlh^n \|_{L^2\II} \le C h^{\al-2+2\beta}\| Av\|_{L^2\II}.
\end{equation*}
It remains to bound $\vtht^n = E_h(n\tau)v_h - r_{cn}(\tau A_h)^nv_h$ by
\begin{equation*}
   \|\vtht^n\|_{L^2\II} \le C\tau^2t_n^{-1} \| A_hv_h \|_{L^2\II}.
\end{equation*}
Note that $\tau A_h$ is also sectorial with the same constant
as $A_h$, and further
$$
\| (zI-\tau A_h)^{-1} \| = \tau^{-1} \| \frac{z}{\tau}-A_h \| \le C \frac{1}{|z|}.
$$
With $t_n=n\tau$, it suffices to show
\begin{equation*}
  \| A_h^{-1} (E_h(n) - r_{cn}(A_h)^n)\| \le Cn^{-1}.
\end{equation*}
By Lemma \ref{lem:semigroup}, there holds
\begin{equation*}
   A_h^{-1} r_{cn}(A_h)^n = \frac{1}{2\pi\mathrm{i}} \int_{{\Gamma_{\ep}}
     \cup{\Gamma_{\ep}^R} \cup{\Gamma^R}}r_{cn}(z)^n z^{-1} R(z;A_h) \,dz.
\end{equation*}
Since $ \| r_{cn}(z)^n z^{-1} R(z;A_h)\| =O(z^{-2})$ for large $z$, we can let $R$ tend to $\infty$.
Further, by \cite[Lemma 9.3]{Thomee:2006}, we have
\begin{equation*}
    A_h^{-1} E_h(n) =\frac{1}{2\pi\mathrm{i}} \int_{{\Gamma_{\ep}}\cup{\Gamma_{\ep}^{\infty}}} e^{-nz} z^{-1} R(z;A_h) \,dz.
\end{equation*}
By Lemma \ref{lem:cnbound},
\begin{equation*}
\|(e^{-nz} - r_{cn}(z)^n) z^{-1} R(z;A_h)\| =O(z)\quad \mbox{ as } z\to 0,\ |\arg z| \leq \delta_1,
\end{equation*}
and consequently, by taking $\ep\rightarrow 0$, there holds
\begin{equation*}
\begin{split}
   A_h^{-1} (E_h(n) - r_{cn}(A_h)^n)
    &=\frac{1}{2\pi\mathrm{i}} \int_{\Gamma} (e^{-nz} - r_{cn}(z)^n) z^{-1} R(z;A_h) \,dz,
\end{split}
\end{equation*}
where the sector $\Gamma$ is given by $\Gamma= \left\{z:z=\rho e^{\pm \mathrm{i}\delta_1}, \rho\ge 0 \right\}$.
By applying Lemma \ref{lem:cnbound} with $R=1$, we deduce
\begin{equation} \label{eqn:fullestimate1}
\begin{aligned}
   \|A_h^{-1} (E_h(n) - r_{cn}(A_h)^n) \|
   &= \bigg |\hspace{-.6mm}\bigg |\frac{1}{2\pi\mathrm{i}}
            \int_{\Gamma} (e^{-nz} - r_{cn}(z)^n) z^{-1} R(z;A_h) \,dz \bigg |\hspace{-.6mm}\bigg |\\
   &\le C \int_0^1 \rho n e^{-cn\rho}\, d\rho + C \int_1^{\infty} \rho^{-2}e^{-cn\rho^{-1}}\, d\rho\\
   &\le Cn^{-1} \left(\int_0^{\infty} \rh e^{-\rh} d\rh + \int_0^{\infty} e^{-\rh}\,d\rh\right)\le Cn^{-1}.
\end{aligned}
\end{equation}
This completes the proof of the theorem.
\end{proof}

Now we turn to the case of nonsmooth initial data, i.e., $v\in L^2\II$. It is known that
in case of the standard parabolic equation, the
Crank-Nicolson method fails to give an optimal error estimate for such data unconditionally
because of a lack of smoothing property \cite{LuskinRannacher:1982,Zlamal:1974}. Hence we
employ a damped Crank-Nicolson scheme, which is realized by replacing the first two time steps
by the backward Euler method. Further, we denote
\begin{equation}\label{eqn:dampedCN}
 r_{dcn}(z)^n =r_{bw}(z)^2 r_{cn}(z)^{n-2}.
\end{equation}
The damped Crank-Nicolson scheme is also unconditionally stable. Further, the function
$r_{dcn}(z)$ has the following estimates \cite[Lemma 2.2]{Hansbo:1999}.
\begin{lemma}\label{lem:dcnbound}
Let $r_{dcn}$ be defined as in \eqref{eqn:dampedCN} then there exist
positive constants $\ep$, $R$, $C$, $c$ such that
\begin{equation}\label{eqn:dcnbound}
\begin{split}
    &|r_{dcn}(z)^n| \le \left\{\begin{array}{ll}
       (1+C|z|)^n, & |z|<\ep;\\
       e^{-cn|z|}, & \forall\, |z|\le 1,\, |\arg(z)| \le \delta_1;\\
       C|z|^{-2}e^{-\frac{c(n-2)}{|z|}}, & \forall |z|\ge 1,\, |\arg(z)| \le \delta_1,\, n\ge2;\\
       C|z|^{-2}, & |z| \ge R,\,  n\ge2,\\
       \end{array}\right. \\
    &|r_{bw}(z)^2-e^{-2z}| \le C|z|^2,  \quad \forall |z|\le \ep \quad \text{or} \quad |\arg(z)| \le \delta_1.
\end{split}
\end{equation}
\end{lemma}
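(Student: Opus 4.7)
The strategy is to treat each inequality separately by exploiting the factorization $r_{dcn}(z)^n = r_{bw}(z)^2 \, r_{cn}(z)^{n-2}$, where $r_{bw}(z) = 1/(1+z)$ is the backward Euler symbol already analyzed in \eqref{eqn:rationalbound} and $r_{cn}(z)$ was studied in Lemma \ref{lem:cnbound}. For each of the four regions in \eqref{eqn:dcnbound} I will bound $r_{bw}(z)^2$ and $r_{cn}(z)^{n-2}$ in the sharpest form available there, and then simply multiply.

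For $|z|<\ep$ (first bound), Taylor expansions give $r_{bw}(z) = 1 - z + O(z^2)$ and $r_{cn}(z) = 1 - z + O(z^2)$, so each factor is of size at most $1 + C|z|$; taking the product and raising to the $n$-th power yields $|r_{dcn}(z)^n|\le (1+C|z|)^n$. For the second bound, with $|z|\le 1$ and $|\arg z|\le \delta_1$, the sectorial estimate \eqref{eqn:rationalbound} applied to $r_{bw}$ gives $|r_{bw}(z)|\le e^{-c|z|}$, and the analogous bound $|r_{cn}(z)|\le e^{-c|z|}$ from the proof of Lemma \ref{lem:cnbound} gives $|r_{cn}(z)^{n-2}|\le e^{-c(n-2)|z|}$. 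Multiplying yields $|r_{dcn}(z)^n|\le e^{-cn|z|}$ after adjusting constants. For the fourth bound, valid for $|z|\ge R$ with $n\ge 2$, simply use $|r_{cn}(z)|\le 1$ in the right half plane (which follows because $|1-z/2|\le |1+z/2|$ when $\Re z\ge 0$) combined with $|r_{bw}(z)^2|=|1+z|^{-2}\le C|z|^{-2}$.

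The technically delicate case is the third bound, valid for $|z|\ge 1$ in the sector. Here $|r_{bw}(z)^2|\le C|z|^{-2}$ supplies the prefactor. For the remaining factor I write $r_{cn}(z) = -(1-2w)/(1+2w)$ with $w=1/z$, which for small $w$ equals $-e^{-4w + O(w^2)}$ as noted in Lemma \ref{lem:cnbound}; therefore $|r_{cn}(z)^{n-2}|\le e^{-(n-2)\Re(4/z) + O((n-2)/|z|^2)}$. Since $\Re(1/z) = \cos(\arg z)/|z|\ge \cos(\delta_1)/|z|$, this gives $|r_{cn}(z)^{n-2}|\le e^{-c(n-2)/|z|}$ once $|z|$ is large enough; the regime $1\le |z|\le R$ is then handled by a continuity/compactness argument using the uniform bound $|r_{cn}(z)|\le 1$ in the right half plane, so both factors combine to give $|r_{dcn}(z)^n|\le C|z|^{-2} e^{-c(n-2)/|z|}$.

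The auxiliary estimate $|r_{bw}(z)^2 - e^{-2z}|\le C|z|^2$ in the small-$|z|$ region follows immediately from the Taylor expansions $r_{bw}(z)^2 = 1 - 2z + 3z^2 + O(z^3)$ and $e^{-2z} = 1 - 2z + 2z^2 + O(z^3)$, whose difference is $O(z^2)$; in the sectorial region $|\arg z|\le \delta_1$ one uses the fact that both $r_{bw}(z)^2$ and $e^{-2z}$ are bounded analytic functions in the sector to obtain the bound up to an adjustment of constants. The main obstacle is ensuring that the constants in the exponent of the third inequality can be chosen uniformly over the whole sectorial region $\{|z|\ge 1, |\arg z|\le \delta_1\}$, which requires carefully patching the large-$|z|$ asymptotic analysis of $r_{cn}$ with its behavior in the intermediate annulus $1\le |z|\le R$.
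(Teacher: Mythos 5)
The paper does not actually prove this lemma---it is quoted verbatim from Hansbo \cite[Lemma 2.2]{Hansbo:1999}---so there is no internal proof to compare against. Your plan (factor $r_{dcn}(z)^n=r_{bw}(z)^2r_{cn}(z)^{n-2}$ and bound each factor region by region) is the standard argument and is essentially what the cited reference does. Cases 1 and 2, the auxiliary estimate, and the delicate case 3 are handled correctly: in particular, for $|z|\ge 1$ in the sector the combination of the asymptotic $r_{cn}(z)=-e^{-4/z+O(z^{-2})}$ with $\Re(1/z)\ge\cos\delta_1/|z|$ for large $|z|$, patched with a compactness bound $|r_{cn}(z)|\le\rho<1$ on the annulus $1\le|z|\le R_0$ (where $e^{-c'(n-2)}\le e^{-c'(n-2)/|z|}$ because $|z|\ge1$), does give $C|z|^{-2}e^{-c(n-2)/|z|}$ with uniform constants.

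The one genuine gap is the fourth estimate. You justify it via $|r_{cn}(z)|\le 1$ on the right half-plane, but the bound as stated carries no restriction on $\arg z$, and it is used in Theorem \ref{thm:fullnonsmooth:cn} on the arc $\Gamma^R=\{|z|=R,\ \delta_1\le|\arg z|\le\pi\}$, which lies mostly in the left half-plane. There $|r_{cn}(z)|=|z-2|/|z+2|\le 1+4/(|z|-2)>1$, so $|r_{cn}(z)|^{n-2}\le\exp\bigl(4(n-2)/(|z|-2)\bigr)$ is bounded by an absolute constant only when $|z|\gtrsim n$; with $R$ and $C$ independent of $n$ the stated inequality fails near the negative real axis. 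The fix is either to record the bound as $C_n|z|^{-2}$ for $|z|\ge R_n$ (which suffices for the application, since $R\to\infty$ is taken at fixed $n$, and $|r_{dcn}(z)^n|\to|z|^{-2}$ as $z\to\infty$ for each fixed $n$), or to restrict the fourth case to the sector. You should make this explicit rather than appealing to a right-half-plane contraction property that does not hold on the relevant contour.
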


\begin{theorem}\label{thm:fullnonsmooth:cn}
Let $u$ be the solution of problem \eqref{variational}, and $U^n= r_{dcn}(\tau A_h)U^0$ with $v\in L^2\II$
and $U^0=P_h v$. Then for $t_n = n\tau$ and any $\beta\in[0,1/2)$, there holds
\begin{equation*}
   \| u(t_n)- U^n \|_{L^2\II} \le C (h^{\al-2+2\beta} t_n^{-1}+ \tau^2 t_n^{-2})\| v \|_{L^2\II}.
\end{equation*}
\end{theorem}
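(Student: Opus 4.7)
The plan is to follow the template of Theorems \ref{thm:fullnonsmooth:euler} and \ref{thm:fullsmooth:cn}. First, split the error as
\begin{equation*}
  e^n = (u(t_n)-u_h(t_n)) + (u_h(t_n)-U^n) =: \rlh^n + \vtht^n,
\end{equation*}
where $u_h$ is the semidiscrete Galerkin solution with $v_h=P_hv$. Theorem \ref{thm:seminonsmooth} immediately gives $\|\rlh^n\|_{L^2\II}\le Ch^{\al-2+2\beta}t_n^{-1}\|v\|_{L^2\II}$, so the entire task reduces to showing $\|\vtht^n\|_{L^2\II}\le C\tau^2 t_n^{-2}\|v\|_{L^2\II}$.

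Since $B=\tau A_h$ is sectorial with the same constants as $A_h$, and $\vtht^n=(e^{-nB}-r_{dcn}(B)^n)v_h$, the claim is equivalent to the operator bound $\|e^{-nB}-r_{dcn}(B)^n\|\le Cn^{-2}$ (together with $L^2\II$-stability of $P_h$). Cases $n=1,2$ are trivial from the unconditional stability of both the semigroup and the damped scheme, so assume $n\ge 3$. Using Lemma \ref{lem:semigroup} and Remark \ref{rem:semigroup} (letting $R\to\infty$, which is permissible because both $e^{-nz}$ and $r_{dcn}(z)^n$ decay at infinity by Lemma \ref{lem:dcnbound}), one obtains the Dunford--Taylor representation
\begin{equation*}
   e^{-nB}-r_{dcn}(B)^n = \frac{1}{2\pi\mathrm{i}}\int_{\Gamma}(e^{-nz}-r_{dcn}(z)^n)R(z;B)\,dz,
\end{equation*}
with $\Gamma=\{\rho e^{\pm\mathrm{i}\delta_1}:\rho\ge 0\}$ and $\|R(z;B)\|\le C|z|^{-1}$.

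The main computation is to bound the scalar integrand on $\Gamma$. The key algebraic identity, which exploits the particular form of $r_{dcn}=r_{bw}^2\,r_{cn}^{n-2}$, is
\begin{equation*}
  e^{-nz}-r_{dcn}(z)^n = (e^{-2z}-r_{bw}(z)^2)e^{-(n-2)z}+r_{bw}(z)^2\bigl(e^{-(n-2)z}-r_{cn}(z)^{n-2}\bigr).
\end{equation*}
On the inner arc $|z|\le 1$, $|\arg z|\le\delta_1$, combine $|e^{-2z}-r_{bw}(z)^2|\le C|z|^2$ from Lemma \ref{lem:dcnbound}, the standard bound $|r_{bw}(z)^2|\le e^{-c|z|}$, and the Crank--Nicolson estimate $|e^{-(n-2)z}-r_{cn}(z)^{n-2}|\le C n|z|^3 e^{-cn|z|}$ from Lemma \ref{lem:cnbound}; these yield $|e^{-nz}-r_{dcn}(z)^n|\le C(|z|^2+n|z|^3)e^{-cn|z|}$. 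Integrating against $|z|^{-1}\,d\rho$ and using $\int_0^\infty\rho^k e^{-cn\rho}\,d\rho=O(n^{-k-1})$ produces $O(n^{-2})$. On the outer part $|z|\ge 1$, the crucial decay $|r_{dcn}(z)^n|\le C|z|^{-2}e^{-c(n-2)/|z|}$ from Lemma \ref{lem:dcnbound}, combined with $|e^{-nz}|\le e^{-cn|z|}$, gives after the substitution $u=(n-2)/\rho$ (which converts $\int_1^\infty\rho^{-3}e^{-c(n-2)/\rho}\,d\rho$ into $(n-2)^{-2}\int_0^{n-2}ue^{-cu}\,du$) the same order $O(n^{-2})$.

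The main obstacle is the analysis on $|z|\ge 1$: without the $A_h^{-1}$ factor available in Theorem \ref{thm:fullsmooth:cn}, the standard estimate $|r_{cn}(z)^n|\le 1$ is not sufficient, and one must instead exploit the quadratic decay of $r_{dcn}(z)$ at infinity contributed by the two backward Euler damping steps. This is exactly why the scheme has to be damped and why the $r_{dcn}$-bounds in Lemma \ref{lem:dcnbound} (rather than the CN-bounds of Lemma \ref{lem:cnbound}) are indispensable at large $|z|$. Once these two integrals are controlled by $Cn^{-2}$, the proof is complete by summing with the semidiscrete estimate of $\rlh^n$.
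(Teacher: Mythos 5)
Your argument is correct and follows essentially the same route as the paper: the same splitting $e^n=\rlh^n+\vtht^n$, the same reduction to the operator bound $\|e^{-nz}-r_{dcn}(z)^n\|\le Cn^{-2}$ via the Dunford--Taylor contour representation on $\Gamma$, and the same decomposition $F_n(z)=r_{bw}(z)^2\bigl(e^{-(n-2)z}-r_{cn}(z)^{n-2}\bigr)+e^{-(n-2)z}\bigl(e^{-2z}-r_{bw}(z)^2\bigr)$ with the bounds of Lemmas \ref{lem:cnbound} and \ref{lem:dcnbound}. The only (immaterial) differences are that the paper estimates the two pieces separately, shifting the contour to $\Gamma_{\ep/n}\cup\Gamma_{\ep/n}^{\infty}$ for the second piece, while you bound the combined integrand directly on the inner and outer parts of $\Gamma$.
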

\begin{proof}
We split the error $e^n=u(t_n)-U^n$ as \eqref{eqn:fullsplit}. Since the bound on $\rlh^n$
follows from Theorem  \ref{thm:seminonsmooth}, it remains to bound
$\vtht^n = E_h(\tau n)v_h - r_{dcn}(\tau A_h)^nv_h$ for $n\ge1$ as
\begin{equation*}
   \|  \vtht^n \|_{L^2\II} \le C \tau^2 t_n^{-2} \| v_h \|_{L^2\II}.
\end{equation*}
Let $F_n(z)= e^{-nz}-r_{dcn}(z)^n$. Then it
suffices to show for $n\ge1$
\begin{equation*}
  \|F_n(A_h)\|\le C  n^{-2}.
\end{equation*}
The estimate is trivial for $n=1,2$ by boundedness. For $n > 2$, we split $F_n(z)$ into
\begin{equation*}
\begin{split}
  F_n(z) & = r_{bw}(z)^2(e^{-(n-2)z}-r_{cn}(z)^{n-2})+e^{-(n-2)z}(e^{-2z}-r_{bw}(z)^2) \\
   &:= f_1(z)+f_2(z).
\end{split}
\end{equation*}
It follows from \cite[Lemma 9.1 and Lemma 9.3]{Thomee:2006} that
\begin{equation*}
  \begin{aligned}
    r_{dcn}(A_h)^n &= \frac{1}{2\pi\mathrm{i}} \int_{{\Gamma_{\ep}}\cup{\Gamma_{\ep}^R} \cup{\Gamma^R}}r_{dcn}(z)^nR(z;A_h)\,dz,\\
    E_h(n) &= \frac{1}{2\pi\mathrm{i}} \int_{{\Gamma_{\ep}}\cup{\Gamma_{\ep}^{\infty}}}e^{-nz}R(z;A_h)\,dz.
  \end{aligned}
\end{equation*}
Using the fact $\| r_{dcn}(z)^n R(z;A_h)\| =O(z^{-3})$ as $z\rightarrow \infty$,
we may let $R\to\infty$ to obtain
\begin{equation*}
  F_n(A_h)= \frac{1}{2\pi\mathrm{i}} \int_{{\Gamma_{\ep}}\cup{\Gamma_{\ep}^{\infty}}}F_n(z)R(z;A_h)\,dz.
\end{equation*}
Further, by Lemma \ref{lem:dcnbound}, $\|F_n(z) R(z;A_h)\| =O(z)$ as $z \rightarrow 0$,
and consequently by taking $\ep\rightarrow 0$ and setting $\Gamma=
\left\{z:z=\rho e^{\pm i\delta_1}, \rho\ge 0 \right\}$, we have
\begin{equation}\label{eqn:repret3}
\begin{split}
  F_n(A_h) &= \frac{1}{2\pi\mathrm{i}} \int_{\Gamma}F_n(z)R(z;A_h)\,dz\\
   &= \frac{1}{2\pi\mathrm{i}} \int_{\Gamma}(f_1(z)+f_2(z))R(z;A_h)\,dz.
\end{split}
\end{equation}
Now we estimate the two terms separately. First, by Lemmas \ref{lem:cnbound} and \ref{lem:dcnbound}, we get
\begin{equation*}
\begin{split}
  &|f_1(z)| \le  |r_{dcn}(z)^n| + |r_{bw}(z)^2||e^{-(n-2)z}| \le C|z|^{-2}e^{-\frac{cn}{|z|}}, \quad z\in \Gamma,\ |z|\ge1,\\
  &|f_1(z)| \le |r_{bw}(z)^2||r_{cn}(z)^{n-2}-e^{-(n-2)z}| \le C|z|^{3}ne^{-cn|z|}, \quad z\in \Gamma,\ |z|\le 1.
\end{split}
\end{equation*}
Repeating the argument for \eqref{eqn:fullestimate1} gives that for $n> 2$
\begin{equation*}
  \bigg|\hspace{-0.6mm}\bigg|\frac1{2\pi\mathrm{i}}\int_{\Gamma}f_1(z)R(z;A_h) \,dz \bigg|\hspace{-0.6mm}\bigg| \le Cn^{-2}.
\end{equation*}
As to other term, we deduce from \eqref{eqn:dcnbound} that
\begin{equation*}
   |f_2(z)| \le |e^{-(n-2)z}||r_{bw}(z)^2-e^{-2z}| \le C |z|^2 ,\quad \forall |z| \le \ep,
\end{equation*}
and thus we can change the integration path $\Gamma$ to $\Gamma_{\ep/n}^{\infty}\cup \Gamma_{\ep/n}$.
Further, we deduce from Lemma \ref{lem:dcnbound} that
\begin{equation*}
   |f_2(z)|=|e^{-(n-2)z}(r_{bw}(z)^2-e^{-2z})| \le Ce^{-c(n-2)|z|}|z|^2,\quad \forall z\in\Ga_{\ep/n}^{\infty}.
\end{equation*}
Thus, we derive the following bound for $n>2$
\begin{equation*} 
  \bigg|\hspace{-0.6mm}\bigg|\frac1{2\pi\mathrm{i}}\int_{\Gamma}f_1(z)R(z;A_h) \,dz \bigg|\hspace{-0.6mm}\bigg|
           \le C \int_{\ep/n}^{\infty}e^{-c(n-2)\rho} \rho d\rho +C\int_{\Ga_{\ep/n}}\rho \, d\rho \le Cn^{-2}.
\end{equation*}
This completes the proof of the theorem.
\end{proof}

\section{Numerical results}\label{sec:numeric}
In this section, we present numerical experiments to verify our theoretical results. To this end,
we consider the following three examples:
\begin{enumerate}
 \item[(a)] smooth initial data: $v(x)= x(x-1)$, which lies
  in $\Hd{3/2-\ep}$.
 \item[(b)] nonsmooth initial data: (b1) $ v(x)=\chi_{(1/2,1)}(x)$,
 the characteristic function of the interval $(1/2,1)$;
 (b2) $v(x)=x^{1/4}$; Note that in (b1) $v \in \Hd{1/2-\epsilon}$
   while in (b2) $v \in \Hd {1/4-\epsilon}$, for any $\epsilon >0$.
 \item[(c)] discontinuous potential $q(x)=\chi_{(0,1/2)}(x)$. 
\end{enumerate}

We examine separately the spatial and temporal convergence rates at $t=1$.
For the case of nonsmooth initial data, we are especially interested in the errors
for $t$ close to zero, and thus we also present the errors at $t=0.1$, $0.01$,
$0.005$, and $0.001$. The exact solutions to these examples are not available
in closed form, and hence we compute the reference solution on a very refined mesh.
We measure the accuracy of the numerical approximation $U^n$ by the normalized errors
$\|u(t_n)-U^n\|_{L^2\II}/\|v\|_{L^2\II}$ and $\|u(t_n)-U^n\|_{\Hd {\al/2}}/ \|v
\|_{L^2\II}$. The normalization enables us to observe the behavior of the errors with respect
to time in case of nonsmooth initial data. To study the rate of convergence in space,
we use a time step size $\tau=10^{-5}$ so that the time discretization error is negligible,
and we have the space discretization error only.

\subsection{Numerical results for example (a): smooth initial data}

In Table \ref{tab:smoothBE} we show the errors $\|u(t_n)-U^n\|_{L^2\II}$ and $\|u(t_n)-U^n
\|_{\Hd {\al/2}}$ with the backward Euler method. We have set $\tau=10^{-5}$, so that the
error incurred by temporal discretization is negligible. In the table, \texttt{ratio}
refers to the ratio of the errors when the mesh size $h$ (or time step size $\tau$)
halves, and the numbers in the bracket denote theoretical convergence rates.
The numerical results show $O(h^{\al-1/2})$ and $O(h^{\al/2-1/2})$ convergence rates for
the $L^2\II$- and $\Hd {\al/2}$-norms of the error, respectively. In Fig.
\ref{fig:smooth_space}, we plot the results for $\al=1.5$ at $t=1$
in a log-log scale. The $\Hd{\alpha/2}$-norm estimate is fully confirmed, but the $L^2
\II$-norm estimate is suboptimal: the empirical convergence rate is one half order higher
than the theoretical one. The suboptimality is attributed to the low regularity of the
adjoint solution, used in Nitsche's trick. In view of the singularity of the term
$x^{\alpha-1}$ in the solution representation, cf. Remark \ref{rmk::singular}, the
spatial discretization error is concentrated around the origin.

\begin{table}[hbt!]
\small
\caption{$L^2$- and $ \tilde{H}^{\al/2}$-norms of the error for example (a),
smooth initial data, with $\al=1.25, 1.5, 1.75$ for backward Euler method and $\tau=10^{-5}$; in the last
column in brackets is the theoretical rate.} 
\label{tab:smoothBE}
\begin{center}
\vspace{-.3cm}
     \begin{tabular}{|c|c|cccccc|c|}
     \hline
   $\al$  & $h$ & $1/16$ & $1/32$ &$1/64$ &$1/128$ & $1/256$ & 1/512&ratio \\
     \hline
     $1.25$ & $L^2$  &5.13e-3 &2.89e-3 &1.69e-3 &1.00e-3 &6.03e-4 &3.71e-4 &$\approx$ 0.76 ($0.25$) \\
     \cline{2-8}
     & $\tilde{H}^{\al/2}$         &4.93e-2 &4.39e-2 &3.98e-2 &3.62e-2 &3.29e-2 &3.00e-2 &$\approx$ 0.14 ($0.13$)\\
    \hline
     $1.5$ & $L^2$   &3.62e-4 &1.70e-4 &8.37e-5 &4.17e-5 &2.09e-5 &1.06e-5 &$\approx$ 1.02 ($0.50$)\\
     \cline{2-8}
     & $\tilde{H}^{\al/2}$     &7.57e-3 &6.25e-3 &5.20e-3 &4.33e-3 &3.58e-3 &2.91e-3 &$\approx$ 0.27 ($0.25$)\\
     \hline
     $1.75$ & $L^2$   &1.12e-5 &4.61e-6 &1.92e-6 &8.02e-7 &3.35e-7 &1.37e-7 &$\approx$ 1.26 ($0.75$)\\
     \cline{2-8}
     & $\tilde{H}^{\al/2}$   &4.63e-4 &3.47e-4 &2.58e-4 &1.95e-4 &1.46e-4 &1.06e-4 &$\approx$ 0.42 ($0.38$)\\
     \hline
     \end{tabular}
\end{center}
\end{table}

In Table \ref{tab:smoothChecktime}, we let the spacial step size $h \rightarrow 0$
and examine the temporal convergence order, and observe an $O(\tau)$ and $O(\tau^2)$ convergence
rates for the backward Euler method and Crank-Nicolson method, respectively.
Note that for the case $\al=1.75$, the Crank-Nicolson method fails to achieve
an optimal convergence order. This is attributed to the fact that $v$ is not in the domain
of the differential operator ${_0^RD_x^{\al}}$
for $\al > 1.5$. In contrast, the damped Crank-Nicolson method yields the
desired $O(\tau^2)$ convergence rates, cf. Table \ref{tab:DampedCNsmooth}.
This confirms the discussions in Section \ref{ssec:fullcn}.

\begin{figure}[htb!]
\center
  \includegraphics[trim = 1cm .1cm 2cm 0.5cm, clip=true,width=10cm]{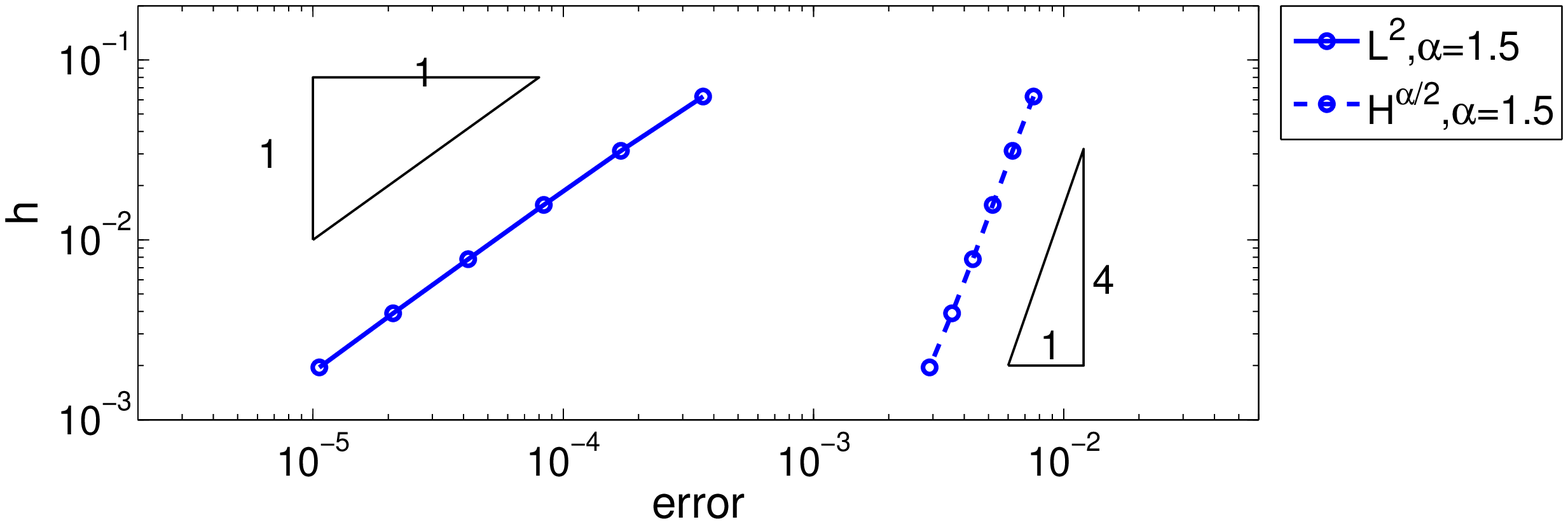}
  \vspace{-.2cm}
  \caption{Numerical results for example (a) (smooth data) with $\alpha=1.5$ at $t=1$.}\label{fig:smooth_space}
\end{figure}

\begin{table}[htb!]
\caption{$L^2$-norm of the error for example (a),
non-smooth initial data, with $\al=1.25, 1.5, 1.75$,
$h=2\times 10^{-5}$ (BE - backward Euler, CN - Crank-Nicolson)}
\label{tab:smoothChecktime}
\begin{center}
\vspace{-.3cm}
     \begin{tabular}{|c|c|ccccc|c|}
     \hline
     & $\tau$ & $1/10$ & $1/20$ &$1/40$ &$1/80$ & $1/160$  &ratio \\
     \hline
     BE
     & $\al=1.25$      &3.01e-2 &1.41e-2 &6.63e-3 &3.10e-3 &1.41e-3  &$\approx$ 1.10 (1.00) \\
     \cline{2-7}
     & $\al=1.5$       &1.32e-2 &5.88e-3 &2.71e-3 &1.25e-3 &5.62e-4  &$\approx$ 1.13 (1.00)  \\
     \cline{2-7}
     & $\al=1.75$      &4.79e-3 &1.88e-3 &7.95e-3 &3.53e-4 &1.55e-4  &$\approx$ 1.20 (1.00)  \\
     \hline
     CN
     & $\al=1.25$      &3.18e-3 &5.98e-4 &1.35e-4 &3.32e-5 &8.52e-6  &$\approx 2.10$ (2.00)\\
     \cline{2-7}
     & $\al=1.5$       &3.22e-3 &7.32e-4 &1.75e-4 &4.32e-5 &1.05e-5  &$\approx$ 2.06 (2.00)  \\
     \cline{2-7}
     & $\al=1.75$      &3.67e-3 &1.09e-3 &3.33e-4 &1.08e-4 &3.09e-5  &$\approx$ 1.73 ( - - ) \\
     \hline
     \end{tabular}
\end{center}
\end{table}

\begin{table}[htb!]
\caption{$L^2$-norm of the error for example (a),
non-smooth initial data, for damped Crank-Nicolson method
with $\al=1.75$  
and $h=2\times 10^{-5}$.}
\label{tab:DampedCNsmooth}
\begin{center}
\vspace{-.3cm}
     \begin{tabular}{|c|ccccc|c|}
     \hline
      $\tau$ & $1/10$ & $1/20$ &$1/40$ &$1/80$ & $1/160$  &ratio \\
     \hline
      $\al=1.75$  &7.57e-4 &1.98e-4 &5.45e-5 &1.40e-5 &2.90e-6  &$\approx$ 1.98 (2.00) \\
     \hline
     \end{tabular}
\end{center}
\end{table}

\subsection{Numerical results for nonsmooth initial data: example (b)}
In Tables \ref{tab:nonsmooth1BE}, \ref{tab:nonsmooth1Checktime} and \ref{tab:nonsmooth1smalltime},
we present numerical results for problem (b1). Table \ref{tab:nonsmooth1BE} shows that the spatial
convergence rate is of the order $O(h^{\al-1+\beta})$ in $L^2\II$-norm and $O(h^{\alpha/2-1+\beta})$
in $\Hd{\alpha/2}$, whereas Table \ref{tab:nonsmooth1Checktime} shows that the temporal convergence
order is of order $O(\tau)$ and $O(\tau^2)$ for the backward Euler method and damped Crank-Nicolson
method, respectively. For the case of nonsmooth initial data, we are interested in the errors for
$t$ closed to zero, thus we check the error at $t=0.1$, $0.01$, $0.005$ and $0.001$. From Table
\ref{tab:nonsmooth1smalltime}, we observe that both the $L^2\II$-norm and $\Hd{\al/2}$-norm of
the error exhibit superconvergence, which theoretically remains to be established. Numerically,
for this example. one observes that the solution is smoother than in $\Hdi 0{\alpha-1+\beta}$ for
small time $t$, cf. Fig. \ref{fig:checksolution_b1}.

Similarly, the numerical results for problem (b2) are presented in Tables \ref{tab:nonsmooth2BE},
\ref{tab:nonsmooth2Checktime} and \ref{tab:nonsmooth2smalltime}; see also Fig. \ref{fig:nonsmooth2_small_time}
for a plot of the results in Table \ref{tab:nonsmooth2smalltime}. It is observed that the convergence
is slower than that for problem (b1), due to the lower solution regularity.

\begin{table}[htb!]
\small
\caption{$L^2$- and $ \tilde{H}^{\al/2}$-norms of the error for example (b1), nonsmooth initail data,
for backward Euler method with  $\tau=10^{-5}$.}
\label{tab:nonsmooth1BE}
\begin{center}
\vspace{-.3cm}
     \begin{tabular}{|c|c|cccccc|c|}
     \hline
     $\al$& $h$ & $1/16$ & $1/32$ &$1/64$ &$1/128$ & $1/256$ & 1/512&ratio \\
     \hline
     $1.25$ & $L^2$ &6.65e-3 &3.75e-3 &2.18e-3 &1.29e-3 &7.78e-4 &4.77e-4 &$\approx$ 0.76 ($0.25$)\\
     \cline{2-8}
     &$\tilde{H}^{\al/2}$         &6.36e-2 &5.66e-2 &5.12e-2 &4.66e-2 &4.24e-2 &3.87e-2 &$\approx$ 0.14 ($0.13$)\\
    \hline
     $1.5$ & $L^2$   &3.78e-4 &1.77e-4 &8.56e-5 &4.22e-5 &2.09e-5 &1.04e-5 &$\approx$ 1.03 ($0.50$)\\
     \cline{2-8}
     & $\tilde{H}^{\al/2}$         &7.31e-3 &6.01e-3 &5.00e-3 &4.16e-3 &3.43e-3 &2.79e-3 &$\approx$ 0.27 ($0.25$) \\
     \hline
     $1.75$ & $L^2$  &2.11e-5 &9.49e-6 &4.06e-6 &1.69e-6 &6.83e-7 &2.59e-7 &$\approx$ 1.27 ($0.75$)\\
     \cline{2-8}
     & $\tilde{H}^{\al/2}$         &3.63e-4 &2.69e-4 &1.99e-4 &1.50e-4 &1.12e-4 &8.19e-5 &$\approx$ 0.43 ($0.38$) \\
     \hline
     \end{tabular}
\end{center}
\end{table}

\begin{table}[htb!]
\caption{$L^2$-norm of the error for example (b1),
non-smooth initial data, with 
$h=2\times 10^{-5}$ (BE - backward Euler, CN - Crank-Nicolson)
}
\label{tab:nonsmooth1Checktime}
\begin{center}
\vspace{-.3cm}
     \begin{tabular}{|c|c|ccccc|c|}
     \hline
     & $\tau$ & $1/10$ & $1/20$ &$1/40$ &$1/80$ & $1/160$  &ratio \\
     \hline
     BE
     & $\al=1.25$      &3.73e-2 &1.80e-2 &8.53e-3 &4.00e-3 &1.81e-3  &$\approx$ 1.09 (1.00)\\
     \cline{2-7}
     & $\al=1.5$       &1.26e-2 &5.64e-3 &2.59e-3 &1.20e-3 &5.40e-4  &$\approx$ 1.13 (1.00)\\
     \cline{2-7}
     & $\al=1.75$      &3.68e-3 &1.44e-3 &6.12e-3 &2.71e-4 &1.20e-4  &$\approx$ 1.19 (1.00)\\
     \hline
     CN
     & $\al=1.25$      &3.52e-3 &9.10e-4 &2.39e-4 &5.90e-5 &1.30e-5  &$\approx 2.01$ (2.00)\\
     \cline{2-7}
     & $\al=1.5$       &8.86e-4 &2.42e-2 &6.46e-5 &1.61e-5 &3.44e-6  &$\approx$ 1.99 (2.00)\\
     \cline{2-7}
     & $\al=1.75$      &1.86e-4 &4.01e-5 &1.02e-5 &2.57e-6 &5.41e-7  &$\approx$  2.09 (2.00)\\
     \hline
     \end{tabular}
\end{center}
\end{table}

\begin{table}[htb!]
\small
\caption{$L^2$- and $ \tilde{H}^{\al/2}$-norms of the error for example (b1), nonsmooth initial data,
with $\al=1.5$ 
for backward Euler method and $\tau=10^{-5}$.} 
\label{tab:nonsmooth1smalltime}
\begin{center}
\vspace{-.3cm}
     \begin{tabular}{|c|c|cccccc|c|}
     \hline
     $t$ & $h$ & $1/16$ & $1/32$ &$1/64$ &$1/128$ & $1/256$ & 1/512&ratio \\
     \hline
     $0.1$ & $L^2$     &3.64e-3 &1.53e-3 &7.42e-4 &3.72e-5 &1.87e-4 &9.46e-5 &$\approx$ 1.04 ($0.50$) \\
     \cline{2-8}
     & $\tilde{H}^{\al/2}$        &7.00e-2 &5.59e-2 &4.62e-2 &3.87e-2 &3.21e-2 &2.61e-2 &$\approx$ 0.28 ($ 0.25$) \\
    \hline
     $0.01$ & $L^2$   &2.81e-2 &7.07e-2 &1.63e-3 &3.84e-4 &9.21e-5 &2.18e-5 &$\approx$ 2.07 ($ 0.50$)\\
     \cline{2-8}
     & $\tilde{H}^{\al/2}$        &4.04e-1 &1.56e-1 &6.09e-2 &2.49e-2 &1.03e-2 &4.27e-3 &$\approx$ 1.31 ($ 0.25$) \\
     \hline
     $0.005$ & $L^2$  &4.27e-2 &1.45e-2 &3.44e-3 &7.95e-4 &1.88e-4 &4.41e-4 &$\approx$ 2.07 ($0.50$) \\
     \cline{2-8}
     & $\tilde{H}^{\al/2}$         &5.94e-1 &3.34e-1 &1.27e-1 &5.05e-2 &2.08e-2 &8.56e-3 &$\approx$ 1.26 ($0.25$) \\
     \hline
     $0.001$ & $L^2$  &1.41e-1 &5.22e-2 &1.64e-2 &4.47e-3 &1.02e-3 &2.32e-3 &$\approx$ 1.80 ($0.50$) \\
     \cline{2-8}
     & $\tilde{H}^{\al/2}$         &2.61e0 &1.45e0 &6.63e-1 &2.81e-1 &1.08e-1 &4.34e-2 &$\approx$ 1.20 ($0.25$) \\
     \hline
     \end{tabular}
\end{center}
\end{table}

\begin{figure}[htb!]
\center
  \includegraphics[clip=true,width=13cm]{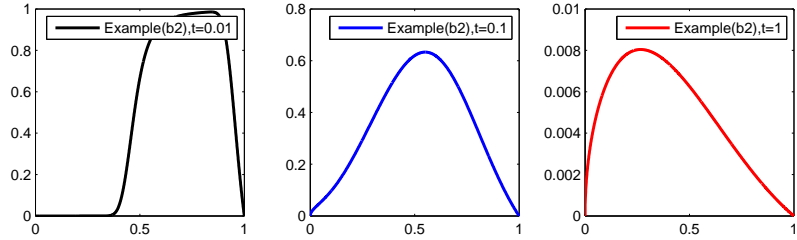}
  \vspace{-.6cm}
  \caption{Solution profile of example (b1) with $\alpha=1.5$ at
   $0.01$, $0.1$ and $1$.}
   \label{fig:checksolution_b1}
\end{figure}

\begin{table}[htb!]
\small
\caption{$L^2$- and $ \tilde{H}^{\al/2}$-norms of the error for example (b2), nonsmooth initial data,
for backward Euler method with $\tau=10^{-5}$.}
\label{tab:nonsmooth2BE}
\begin{center}
\vspace{-.3cm}
     \begin{tabular}{|c|c|cccccc|c|}
     \hline
     $\al$& $h$ & $1/16$ & $1/32$ &$1/64$ &$1/128$ & $1/256$ & 1/512&ratio \\
     \hline
     $1.25$ & $L^2$  &6.31e-3 &3.55e-3 &2.07e-3 &1.23e-3 &7.38e-4 &4.53e-4 &$\approx$ 0.76 ($0.25$) \\
     \cline{2-8}
     & $\tilde{H}^{\al/2}$        &6.03e-2 &5.37e-2 &4.86e-2 &4.42e-2 &4.02e-2 &3.67e-2 &$\approx$ 0.14 ($0.13$) \\
    \hline
     $1.5$ & $L^2$   &4.11e-4 &1.91e-4 &9.24e-5 &4.55e-5 &2.26e-5 &1.12e-5 &$\approx$ 1.03 ($0.50$) \\
     \cline{2-8}
     & $\tilde{H}^{\al/2}$         &7.88e-3 &6.48e-3 &5.39e-3 &4.48e-3 &3.70e-3 &3.01e-3 &$\approx$ 0.27 ($0.25$) \\
     \hline
     $1.75$ & $L^2$  &2.75e-5 &1.21e-6 &5.09e-6 &2.11e-6 &8.48e-7 &3.20e-7 &$\approx$ 1.28 ($0.75$) \\
     \cline{2-8}
     & $\tilde{H}^{\al/2}$        &4.50e-4 &3.33e-4 &2.46e-4 &1.86e-4 &1.39e-4 &1.01e-4 &$\approx$ 0.42 ($0.38$) \\
     \hline
     \end{tabular}
\end{center}
\end{table}

\begin{table}[htb!]
\caption{$L^2$-norm of the error for example (b2),
non-smooth initial data, with 
$h=2\times 10^{-5}$ (BE - backward Euler, CN - Crank-Nicolson).}
\label{tab:nonsmooth2Checktime}
\begin{center}
\vspace{-.3cm}
     \begin{tabular}{|c|c|ccccc|c|}
     \hline
     & $\tau$ & $1/10$ & $1/20$ &$1/40$ &$1/80$ & $1/160$  &ratio \\
     \hline
     BE
     & $\al=1.25$      &3.57e-2 &1.71e-2 &8.09e-3 &3.80e-3 &1.71e-3  &$\approx$ 1.09 (1.00) \\
     \cline{2-7}
     & $\al=1.5$       &1.36e-2 &6.82e-3 &2.80e-3 &1.30e-3 &5.81e-4  &$\approx$ 1.13 (1.00) \\
     \cline{2-7}
     & $\al=1.75$      &4.55e-3 &1.78e-3 &7.57e-3 &3.35e-4 &1.48e-4  &$\approx$ 1.20 (1.00) \\
     \hline
     CN
     & $\al=1.25$      &3.32e-3 &8.59e-4 &2.26e-4 &5.60e-5 &1.24e-5  &$\approx$ 2.03 (2.00)\\
     \cline{2-7}
     & $\al=1.5$       &9.36e-4 &2.59e-5 &6.95e-5 &1.74e-6 &3.80e-7  &$\approx$ 1.99 (2.00) \\
     \cline{2-7}
     & $\al=1.75$      &1.69e-4 &4.43e-5 &1.22e-5 &3.15e-6 &6.50e-7  &$\approx$ 1.99 (2.00) \\
     \hline
     \end{tabular}
\end{center}
\end{table}

\begin{table}[htb!]
\small
\caption{$L^2$- and $ \tilde{H}^{\al/2}$-norms of the error for example (b2), nonsmooth initial data,
for backward Euler method with $\tau=10^{-5}$.}
\label{tab:nonsmooth2smalltime}
\begin{center}
\vspace{-.3cm}
     \begin{tabular}{|c|c|cccccc|c|}
     \hline
     $t$& $h$ & $1/16$ & $1/32$ &$1/64$ &$1/128$ & $1/256$ & 1/512&ratio \\
     \hline
     $0.1$ & $L^2$  &1.73e-2 &8.56e-3 &4.27e-3 &2.14e-3 &1.08e-3 &5.43e-4 &$\approx$ 1.00 (0.50) \\
     \cline{2-8}
     & $\tilde{H}^{\al/2}$         &3.83e-1 &3.20e-1 &2.67e-1 &2.23e-1 &1.84e-1 &1.50e-1 &$\approx$ 0.26 (0.25) \\
    \hline
     $0.01$ & $L^2$   &3.35e-2 &1.39e-2 &6.45e-3 &3.17e-3 &1.58e-3 &7.97e-4 &$\approx$ 1.07 (0.50)\\
     \cline{2-8}
     & $\tilde{H}^{\al/2}$         &6.41e-1 &4.89e-1 &3.97e-1 &3.28e-1 &2.71e-1 &2.20e-1 &$\approx$ 0.30 (0.25) \\
     \hline
     $0.005$ & $L^2$ &4.23e-2 &1.83e-2 &7.65e-3 &3.61e-3 &1.79e-3 &8.96e-4 &$\approx$ 1.11 (0.50) \\
     \cline{2-8}
     & $\tilde{H}^{\al/2}$         &7.52e-1 &5.89e-1 &4.55e-1 &3.71e-1 &3.04e-1 &2.47e-1 &$\approx$ 0.29 (0.25) \\
     \hline
     $0.001$ & $L^2$  &1.07e-1 &4.12e-2 &1.54e-2 &5.89e-3 &2.49e-3 &1.19e-3 &$\approx$ 1.30 (0.50) \\
     \cline{2-8}
     & $\tilde{H}^{\al/2}$        &1.98e0 &1.19e0 &7.51e-1 &5.19e-1 &4.08e-1 &3.28e-1 &$\approx$ 0.52 (0.25) \\
     \hline
     \end{tabular}
\end{center}
\end{table}

\begin{figure}[htb!]
\center
  \includegraphics[trim = 1cm .1cm 2cm 0.5cm, clip=true,width=10cm]{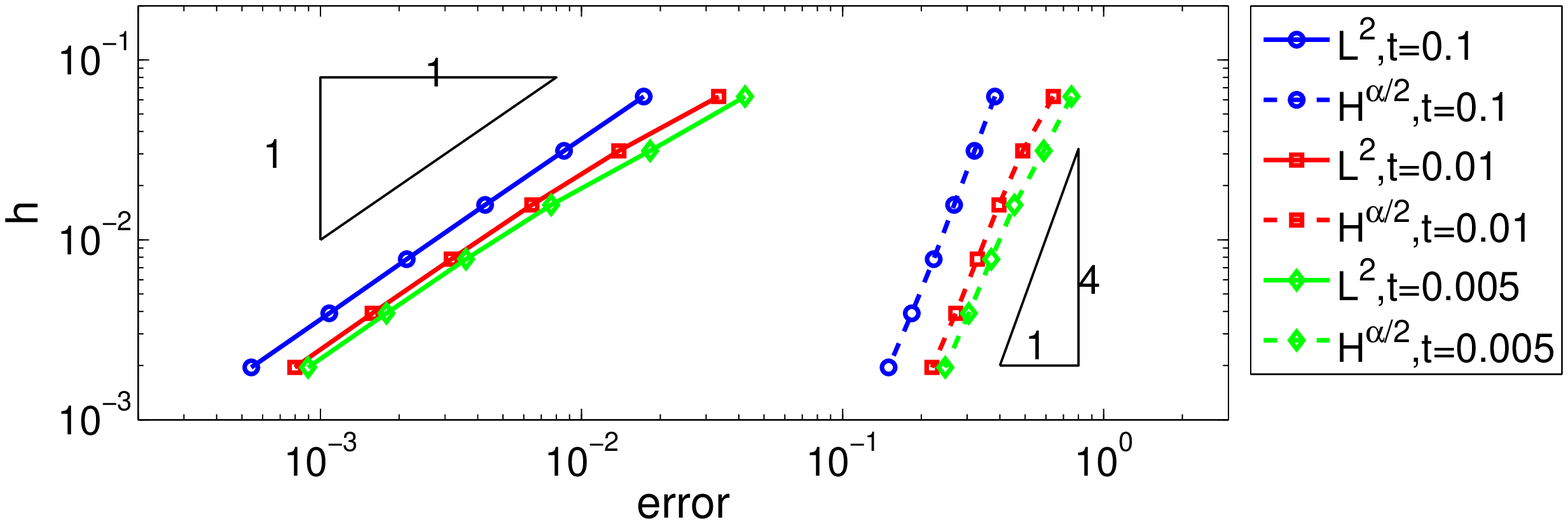}
  \vspace{-.2cm}
  \caption{Numerical results for example (b2)
  with $\alpha=1.5$ at $t=0.1, 0.01$ and $0.005$.}\label{fig:nonsmooth2_small_time}
\end{figure}

\subsection{Numerical results for general problems: example (c)}
Our theory can easily extend to problems with a potential function $q \in L^{\infty}\II$.
Garding's inequality holds for the bilinear form, and thus all theoretical results follow
by the same argument. The normalized $L^2\II$- and $\Hd{\al/2}$-norms of the spatial error
are reported in Table \ref{tab:general} at $t=1$ for $\al=1.25$, $1.5$ and $1.75$.
The results concur with the preceding convergence rates.

\begin{table}[htb!]
\small
\caption{$L^2$-norm of the error for the general differential equation
non-smooth initial data, example (c), with 
$\tau=2\times 10^{-5}$ (BE - backward Euler, CN - Crank-Nicolson).}
\label{tab:general}
\begin{center}
\vspace{-.3cm}
     \begin{tabular}{|c|c|cccccc|c|}
     \hline
     $\al$& $h$ & $1/16$ & $1/32$ &$1/64$ &$1/128$ & $1/256$ & 1/512&ratio \\
     \hline
     $1.25$ & $L^2$  &4.80e-3 &2.71e-3 &1.58e-3 &9.40e-4 &5.66e-4 &3.48e-4 &$\approx$ 0.76 (0.25) \\
     \cline{2-8}
     & $\tilde{H}^{\al/2}$         &4.62e-2 &4.12e-2 &3.73e-2 &3.39e-2 &3.09e-2 &2.82e-2 &$\approx$ 0.14 (0.13) \\
    \hline
     $1.5$ & $L^2$   &2.75e-4 &1.31e-4 &6.50e-5 &3.24e-5 &1.63e-5 &8.20e-5 &$\approx$ 1.00 (0.50) \\
     \cline{2-8}
     &$\tilde{H}^{\al/2}$  &5.90e-3 &6.86e-3 &4.05e-3 &3.37e-3 &2.79e-3 &2.26e-3 &$\approx$ 0.27 (0.25) \\
     \hline
     $1.75$ & $L^2$ &7.88e-6 &3.19e-6 &1.33e-6 &5.58e-7 &2.34e-7 &9.60e-8 &$\approx$ 1.27 (0.75) \\
     \cline{2-8}
     & $\tilde{H}^{\al/2}$        &3.24e-4 &2.42e-4 &1.80e-4 &1.36e-4 &1.02e-4 &7.43e-5 &$\approx$ 0.42 (0.38) \\
     \hline
     \end{tabular}
\end{center}
\end{table}

\section{conclusion}\label{sec:conclusion}
In this paper, we have studied a finite element method for an initial boundary value problem for
the parabolic problem with a space fractional derivative of Riemann-Liouville type and order $\alpha\in
(1,2)$ using the analytic semigroup theory. The existence and uniqueness of a weak solution in $L^2
(0,T;\Hd{\alpha/2})$ were established, and an improved regularity result was also shown. Error estimates
in the $L^2\II$- and $\Hd {\alpha/2}$-norm were established for a space semidiscrete scheme with a piecewise
linear finite element method, and $L^2\II$-norm estimates for fully discrete schemes based on the backward
Euler method and the Crank-Nicolson method, for both smooth and nonsmooth initial data.

The numerical experiments fully confirmed the convergence of the numerical schemes, but the $L^2\II$-norm
error estimates are suboptimal: the empirical convergence rates are one-half order higher
than the theoretical ones. This suboptimality is attributed to the inefficiency of Nitsche's trick,
as a consequence of the low regularity of the adjoint solution. Numerically, we observe that the
$\Hd{\alpha/2}$-norm convergence rates agree well with the theoretical ones. The optimal
convergence rates in the $L^2\II$-norm and the $\Hd{\alpha/2}$-norm estimate for the fully
discrete schemes still await theoretical justifications.

\section*{Acknowledgements}
The research of B. Jin has been supported by NSF Grant DMS-1319052, R. Lazarov was supported in parts
by NSF Grant DMS-1016525 and J. Pasciak
has been supported by NSF Grant DMS-1216551. The work of all authors was also supported
by Award No. KUS-C1-016-04, made by King Abdullah University of Science and Technology (KAUST).

\appendix

\section{Proof of Theorem \ref{thm:existence}}\label{app:existence}

\begin{proof}
We divide the proof into four steps.

\noindent Step (i) (energy estimates for $u_m$).
Upon taking $u_m$ as the test function, the identity $2(u_m',u_m)=\frac{d}{dt}\|u_m\|_{L^2\II}^2$ for a.e. $0 \le t \le T$, and the
coercivity of $A(\cdot,\cdot)$, we deduce
\begin{equation}\label{appenergy1}
    \frac{d}{dt}\|u_m(t)\|_{L^2\II}^2 + c_0\|u_m(t)\|^2_{\Hd{\al/2}} \le 2\|f(t)\|_{H^{-\al/2}\II}\|u_m(t)\|_{\Hd{\al/2}}.
\end{equation}
Young's inequality and integration in $t$ over $(0,t)$ gives
\begin{equation*}
  \max_{0\le t\le T} \|u_m(t)\|_{L^2\II}^2 \le \| v \|_{L^2\II}^2 +
C \| f \|_{L^2(0,T;H^{-\al/2}\II)}^2 .
\end{equation*}
Next we integrate \eqref{appenergy1} from $0$ to $T$, and repeat the argument to get
\begin{equation}\label{eqn:uineq}
  \| u_m \|_{L^2(0,T;\Hd {\al/2})}^2 
  \le  \| v \|_{L^2\II}^2  + C\| f \|_{L^2(0,T;H^{-\al/2}\II)}^2 .
\end{equation}
Finally we bound $\| u_m' \|_{L^2(0,T;H^{-\al/2}\II)}$. For any $\fy \in \Hd{\al/2}$
such that  $\|\fy\|_{\Hd{\al/2}} \le 1$, we decompose it into
$\fy=P\fy + (I-P)\fy$ with $P\fy \in \text{span}\{\omega_k\}_{k=1}^m$ and
$I-P \in \text{span}\{\omega_k \}_{k>m}$.
By the stability of the projection $P$, $\|P\fy\|_{\Hd{\al/2}}
\le C\|\fy\|_{\Hd{\al/2}} \le C$, it follows from
$(u_m',P\fy)+A(u_m,P\fy)=(f,P\fy)$ and $(u_m', P \fy)=(u_m',\fy)$ that
\begin{equation*}
  \begin{aligned}
    |\langle u_m' (t), \fy \rangle|  = |\langle u_m' (t) ,P\fy\rangle|
              &  \le  C\left(\|f(t) \|_{H^{-\al/2}\II}+\|u_m(t)\|_{\Hd {\al/2}}\right).
  \end{aligned}
\end{equation*}
Consequently, by the duality argument and \eqref{eqn:uineq} we arrive at
\begin{equation}\label{eqn:utineq}
     \| u_m' \|_{L^2(0,T;H^{-\al/2}\II)}^2 \le C \left(\| f \|_{L^2(0,T;H^{-\al/2}\II)}^2 + \| v \|_{L^2\II}^2\right ).
\end{equation}

\noindent Step (ii) (convergent subsequence).
By \eqref{eqn:uineq} and \eqref{eqn:utineq}, there exists a subsequence, also
denoted by $\{u_m\}$, and
$u \in L^2(0,T;\Hd {\al/2})$ and $\tilde{u} \in L^2(0,T;H^{-\al/2}\II)$, such that
\begin{equation}\label{weakconvg}
    \begin{split}
      u_{m} &\rightarrow u \quad \text{weakly in } L^2(0,T;\Hd{\al/2}), \\
      u_{m}' &\rightarrow \tilde{u} \quad \text{weakly in }  L^2(0,T;H^{-\al/2}\II).
    \end{split}
\end{equation}
By choosing $\phi \in C_0^{\infty}[0,T]$ and $\psi \in \Hd {\al/2}$, we deduce
$$
\int_0^T\langle u_m',\phi\psi \rangle \,dt=-\int_0^T\langle u_m,\phi'\psi \rangle \,dt.
$$
By taking $m\rightarrow \infty$ we obtain
\begin{equation*}
   \int_0^T\langle \tilde{u},\phi\psi \rangle \,dt=-\int_0^T\langle u,\phi'\psi \rangle \,dt
    =\int_0^T\langle u',\phi\psi \rangle \,dt.
\end{equation*}
Thus $\tilde u=u'$ by the density of $\{ \phi(t)\psi(x) \}$ in $ L^2(0,T;\Hd{\al/2})$.

\noindent Step (iii) (weak form). Now for a fixed integer $N$, we choose a test
function $\psi \in V_N = \text{span}\{\omega_k\}_{k=1}^N$, and $\phi\in C^\infty[0,T]$.
Then for $m\ge N$, there holds
\begin{equation}\label{weak4}
   \int_0^T \langle u_m',\psi\phi\rangle +A(u_m,\psi)\phi \, dt = \int_0^T \langle f,\psi\phi \rangle \, dt.
\end{equation}
Then letting $m \rightarrow \infty$, \eqref{weakconvg}
and the density of $\{ \phi(t)\psi(x) \}$ in $ L^2(0,T;\Hd{\al/2})$ gives
\begin{equation}\label{weak2}
\int_0^T \langle u',\fy\rangle +A(u,\fy) \, dt = \int_0^T \langle f,\fy \rangle \, dt,\quad \forall  \fy \in L^2(0,T;\Hd {\al/2}).
\end{equation}
Consequently, we arrive at
\begin{equation*}
   \langle u',\fy\rangle +A(u,\fy) = \langle f,\fy \rangle,\ \ \forall \fy \in \Hd {\al/2}\quad \,\mbox{a.e.  } 0\leq t \leq T.
\end{equation*}

\noindent(iv) (initial condition). The argument presented in \cite[Theorem 3, pp.
287]{Evans:2010} yields  $u \in C([0,T];L^2\II)$. By taking $\phi \in C^{\infty}[0,T]$
with $\fy(T)=0$ and $\psi\in \text{span}\{\omega_k\}_{k=1}^N$, integrating \eqref{weak4} and \eqref{weak2}
by parts with respect to $t$, and a standard density argument, we arrive at the initial condition $u(0)=v$.
The uniqueness follows directly from the energy estimates.
\end{proof}
\bibliographystyle{abbrv}
\bibliography{frac}
\end{document}